\definecolor{modra3}{rgb}{.1,.0,.4}
\theoremstyle{plain}
\newtheorem{theorem}{Theorem} 
\newtheorem{lemma}[theorem]{Lemma}
\newtheorem{proposition}[theorem]{Proposition}
\newtheorem{observation}[theorem]{Observation}
\newtheorem{corollary}[theorem]{Corollary}
\newtheorem{conjecture}[theorem]{Conjecture}
\newtheorem{fact}[theorem]{Fact}
\newtheorem{problem}{Problem}
\begin{document}

\title{The $\mathbb{Z}_2$-genus of Kuratowski minors\thanks{The research was partially performed during the BIRS workshop ``Geometric and Structural Graph Theory'' (17w5154) in August 2017 and during a workshop on topological combinatorics organized by Arnaud de Mesmay and Xavier Goaoc in September 2017.}}

\author{
Radoslav Fulek\thanks{IST, Klosterneuburg, Austria;
\texttt{radoslav.fulek@gmail.com}. Supported by Austrian Science Fund (FWF): M2281-N35}
  \and
Jan Kyn\v{c}l\thanks{Department of Applied Mathematics, 
Charles University, Faculty of Mathematics and Physics, 
Malostransk\'e n\'am.~25, 118 00~ Praha 1, Czech Republic;
\texttt{kyncl@kam.mff.cuni.cz}. Supported by project 
19-04113Y of the Czech Science Foundation (GA\v{C}R), by the Czech-French collaboration
project EMBEDS II (CZ: 7AMB17FR029, FR: 38087RM) and by Charles University project UNCE/SCI/004.}
} 

\date{}

\maketitle


\begin{abstract}
A drawing of a graph on a surface is \emph{independently even} if every pair of nonadjacent edges in the drawing crosses an even number of times. The \emph{$\mathbb{Z}_2$-genus} of a graph $G$ is the minimum $g$ such that $G$ has an independently even drawing on the orientable surface of genus $g$.  An unpublished result by Robertson and Seymour implies that for every $t$, every graph of sufficiently large genus contains as a minor a projective $t\times t$ grid or one of the following so-called \emph{$t$-Kuratowski graphs}: $K_{3,t}$, or $t$ copies of $K_5$ or $K_{3,3}$ sharing at most two common vertices. We show that the $\mathbb{Z}_2$-genus of graphs in these families is unbounded in $t$; in fact, equal to their genus. 
Together, this implies that the genus of a graph is bounded from above by a function of its $\mathbb{Z}_2$-genus, solving a problem posed by Schaefer and \v{S}tefankovi\v{c}, and giving an approximate version of the Hanani--Tutte theorem on orientable surfaces. 
We also obtain an analogous result for Euler genus and Euler $\mathbb{Z}_2$-genus of graphs.
\end{abstract}


\section{Introduction}
\label{section_intro}

The \emph{genus} $\mathrm{g}(G)$ of a graph $G$ is the minimum $g$ such that $G$ has an embedding on the orientable surface $M_g$ of genus $g$.
We say that two edges in a graph are \emph{independent} (also \emph{nonadjacent}) if they do not share a vertex. The \emph{$\mathbb{Z}_2$-genus} $\mathrm{g}_0(G)$ of a graph $G$ is the minimum $g$ such that $G$ has a drawing on $M_g$ with every pair of independent edges crossing an even number of times. Clearly, every graph $G$ satisfies $\mathrm{g}_0(G)\le \mathrm{g}(G)$.

The Hanani--Tutte theorem~\cite{Ha34_uber,Tutte70_toward} states that $\mathrm{g}_0(G)=0$ implies $\mathrm{g}(G)=0$. The theorem is usually stated in the following form, with the optional adjective ``strong''.

\begin{theorem}[The (strong) Hanani--Tutte theorem{~\cite{Ha34_uber,Tutte70_toward}}]
\label{theorem_strong}
A graph is planar if it can be drawn in the plane so that no pair of independent edges crosses an odd number of times.
\end{theorem}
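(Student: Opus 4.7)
My plan is to reduce the theorem, via Kuratowski's theorem, to showing that neither $K_5$ nor $K_{3,3}$ admits an independently even drawing in the plane. Suppose $G$ is non-planar with an independently even drawing $D$. By Kuratowski's theorem, $G$ contains a subdivision $\tilde H$ of some $H \in \{K_5, K_{3,3}\}$. Restricting $D$ to $\tilde H$ and viewing each subdivision path $P_e$ (corresponding to an edge $e$ of $H$) as a single arc produces a drawing $D'$ of $H$ in the plane. The key observation is that $D'$ is again independently even: for independent edges $e, e'$ of $H$, the paths $P_e$ and $P_{e'}$ are vertex-disjoint in $\tilde H$, so every pair consisting of one edge of $P_e$ and one edge of $P_{e'}$ is an independent pair in $G$ and therefore crosses evenly in $D$; summing over such pairs shows that $\mathrm{cr}_{D'}(e, e')$ is even.

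\emph{Parity invariant for $K_5$.} For a plane drawing $D$ of a graph, define
\[
\sigma(D) := \sum_{\{e, f\}\text{ independent}} \mathrm{cr}_D(e, f) \pmod 2.
\]
I would show $\sigma(D) = 1$ for every plane drawing of $K_5$, which suffices. On the standard straight-line drawing of $K_5$ (four outer vertices in convex position plus a central vertex), the sole crossing lies between the two diagonals of the outer quadrilateral, which form an independent pair, giving $\sigma = 1$. It remains to show $\sigma$ is invariant modulo $2$ under generic continuous deformation between drawings. The singular events in such a deformation are self-intersections of an edge, Reidemeister-II pair exchanges, Reidemeister-III moves, and passages of an edge through a vertex; the first three change $\sigma$ by $0$ modulo $2$ trivially. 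The nontrivial event is passing an edge $e$ transversely through a vertex $v$ with $v \notin e$: this flips the parity of $\mathrm{cr}_D(e, f)$ for each of the $\deg(v) = 4$ edges $f$ at $v$, all of which are independent of $e$, so $\sigma$ changes by $4 \equiv 0 \pmod 2$.

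\emph{The $K_{3,3}$ case---the main obstacle.} The same invariant breaks down for $K_{3,3}$ because $\deg(v) = 3$ is odd: passing an edge through a vertex flips $\sigma$ by $3 \equiv 1$. I would address this by seeking a refined invariant: coefficients $\alpha_{e, f} \in \mathbb{F}_2$, indexed by independent edge pairs, such that $\sum \alpha_{e, f}\, \mathrm{cr}_D(e, f)$ is invariant under all generic moves (equivalently, $\sum_{f \ni v}\alpha_{e, f} \equiv 0 \pmod 2$ for every edge $e$ and every vertex $v \notin e$) and is nonzero on some concrete plane drawing of $K_{3,3}$. A complementary approach is to fix a $6$-cycle $C$ of $K_{3,3}$ and apply the Jordan curve theorem to the closed curve traced by $C$ in $D$: the three remaining edges form a perfect matching whose arcs chord $C$, and parity considerations on their crossings with $C$ should force some pair of chords to cross oddly. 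Executing either approach rigorously---producing the refined linear invariant, or doing the Jordan-curve bookkeeping in the presence of self-crossings of $C$---is the most delicate step of the proof.
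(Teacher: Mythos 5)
Your overall plan (reduce to $K_5$ and $K_{3,3}$ via Kuratowski, and then use a parity invariant on the number of independent odd pairs) is the standard proof, which the paper does not reproduce: Theorem~\ref{theorem_strong} is quoted with references, and the parity invariant you are after is exactly what the paper later cites as Kleitman's Lemma~\ref{lemma_parity_K5_K3_3} (with a detailed exposition attributed to Sz\'ekely). Your Kuratowski reduction and the treatment of the deformation moves are sound. However, there is a concrete counting error in the key vertex-passage move, and it is precisely this error that manufactures the ``main obstacle'' for $K_{3,3}$.

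When an edge $e$ sweeps transversely across a vertex $v\notin e$, the parity of $\mathrm{cr}(e,f)$ flips for \emph{every} edge $f$ incident to $v$, but $\sigma$ only records the flips for those $f$ that are \emph{independent} of $e$. You wrote that in $K_5$ all $\deg(v)=4$ such edges are independent of $e$; in fact, if $e=ab$ then the two edges $va$ and $vb$ at $v$ are adjacent to $e$, so only $4-2=2$ of them count, and $\sigma$ changes by $2$, not $4$. Running the identical count for $K_{3,3}$: if $e=ax$ with $a$ and $x$ in opposite parts and $v\notin\{a,x\}$, then exactly one edge at $v$ meets $e$ (the edge $vx$ if $v$ is in $a$'s part, or $va$ if $v$ is in $x$'s part), so again $3-1=2$ of the edges at $v$ are independent of $e$, and $\sigma$ changes by $2\equiv 0\pmod 2$. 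Thus the invariant $\sigma$ is preserved for $K_{3,3}$ just as for $K_5$; combined with the straight-line base drawing of $K_{3,3}$ having a single crossing between a pair of independent edges, you get $\sigma\equiv 1$ for every plane drawing of $K_{3,3}$. The ``refined invariant'' and Jordan-curve detour you sketch are unnecessary; once the count is corrected, the $K_5$ argument carries over verbatim to $K_{3,3}$, recovering Kleitman's lemma and closing the proof.
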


Theorem~\ref{theorem_strong} gives an interesting algebraic characterization of planar graphs that can be used to construct a simple polynomial algorithm for planarity testing~\cite[Section 1.4.2]{Sch13_hananitutte}. 

Pelsmajer, Schaefer and Stasi~\cite{PSS09_pp} extended the strong Hanani--Tutte theorem to the projective plane, using the list of minimal forbidden minors. Colin de Verdi{\`e}re et al.~\cite{CKPPT16_direct} recently provided an alternative proof, which does not rely on the list of forbidden minors.

\begin{theorem}[The (strong) Hanani--Tutte theorem on the projective plane{~\cite{CKPPT16_direct,PSS09_pp}}]
\label{theorem_pp_strong}
If a graph $G$ has a drawing on the projective plane such that every pair of independent edges crosses an even number of times, then $G$ has an embedding on the projective plane.
\end{theorem}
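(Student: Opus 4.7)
The plan is to reduce Theorem~\ref{theorem_pp_strong} to the planar case (Theorem~\ref{theorem_strong}) by cutting the projective plane along a well-chosen non-contractible cycle. We may assume that $G$ is non-planar, for otherwise a planar embedding already embeds in the projective plane. Let $D$ be an independently even drawing of $G$ on the projective plane $N_1$. Since $G$ is non-planar, some cycle of $G$ must be drawn as a non-contractible closed curve in $D$; otherwise the drawing would lift to the orientation double cover and yield a planar drawing, contradicting non-planarity.

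The technical heart of the argument is to modify $D$ so that some cycle $C$ of $G$ is drawn as a \emph{simple} non-contractible closed curve while the independently even condition is preserved. The parity hypothesis supplies the key invariant: for every edge $e\notin C$, the number of crossings of $e$ with $C$ has a well-defined parity (after accounting for edges of $C$ sharing a vertex with $e$), and this parity is constant on the faces of the subdrawing of $C$. Using standard local redrawing moves of the kind that appear in proofs of the planar Hanani--Tutte theorem (rerouting an edge $e$ along the link of a vertex $v$ to flip the parities of crossings of $e$ with two chosen edges incident to $v$), one removes the self-intersections of $C$ one by one, preserving independent evenness. Cutting $N_1$ along the resulting simple non-contractible curve $C$ produces a disk $\Delta$ in which $G-E(C)$ is drawn independently evenly, with the edges that previously crossed $C$ attaching in pairs to antipodally identified points of $\partial \Delta$. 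Inserting an auxiliary vertex at each such attachment point gives an auxiliary planar graph to which Theorem~\ref{theorem_strong} applies, producing a planar embedding whose restriction re-glues along the identification of $\partial\Delta$ into an embedding of $G$ on $N_1$.

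The main obstacle I anticipate is the straightening step, for two related reasons. First, one must guarantee that the sequence of local moves actually terminates in a simple curve, which typically requires choosing $C$ carefully (for instance, as a shortest non-contractible cycle in $D$ after a round of simplifications) and then proceeding by induction on the number of its self-crossings; naive moves can destroy parity relations elsewhere. Second, the ``disk'' $\Delta$ carries a non-trivial antipodal identification along its boundary, so one must verify that the auxiliary planar graph built from the attachment pattern inherits the independently even property and that the planar embedding it receives is compatible with the identification---i.e., the cyclic orders along the two sides of $C$ match up. This bookkeeping around $\partial\Delta$ is, I expect, the step where a direct proof in the spirit of~\cite{CKPPT16_direct} expends most of its effort.
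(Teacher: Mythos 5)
The paper cites Theorem~\ref{theorem_pp_strong} from~\cite{CKPPT16_direct,PSS09_pp} without giving its own proof, so there is no in-paper argument to compare against; I assess your sketch on its own terms. Your high-level plan---cut the projective plane along a simple non-contractible curve traced by a cycle of $G$ and reduce to the planar Hanani--Tutte theorem---is in the spirit of the direct proof of~\cite{CKPPT16_direct}, but it has genuine gaps at each of its three hinges.

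The lift argument for the existence of a non-contractibly drawn cycle is not sound as stated: lifting an independently even drawing on $N_1$ to the orientation double cover does not in general produce an independently even drawing, because a crossing between edges $e$ and $f$ downstairs lifts to two points that may be distributed between the two sheets, so the crossing parity of any single pair of lifted edges is uncontrolled. (The intended conclusion is correct, and a cleaner route is supplied by this paper's Corollary~\ref{cor_zero}: if every cycle were homologically zero in the drawing, $G$ would be planar.) More seriously, you leave the straightening of $C$ into a simple curve entirely open while rightly flagging it as the crux; the proposed local rerouting moves need both a termination argument and a guarantee that they do not introduce new odd crossings among independent edges elsewhere, and neither is provided. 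Finally, the re-gluing step fails as written: after cutting along a simple $C$, applying Theorem~\ref{theorem_strong} to an auxiliary planar graph produces \emph{some} planar embedding, but nothing forces the cyclic order of attachment points along the two arcs of $\partial\Delta$ to agree under the antipodal identification. Some control on the boundary rotation---for instance, first normalizing the drawing with respect to a spanning tree as in Lemma~\ref{lemma_forest} and then arguing homologically, which is the tactic this paper uses for its own $\mathbb{Z}_2$-genus bounds---is essential before the disk can be reassembled into a projective-plane embedding. In short, you have correctly identified the shape of a direct proof, but the two hardest steps, straightening and re-gluing, remain unproved.
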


Whether the strong Hanani--Tutte theorem can be extended to some other surface than the plane or the projective plane has been an open problem. Schaefer and {\v{S}}tefankovi{\v{c}}~\cite{SS13_block} conjectured that $\mathrm{g}_0(G)=\mathrm{g}(G)$ for every graph $G$ and showed that a minimal counterexample to the extension of the strong Hanani--Tutte theorem on any surface must be $2$-connected. Recently, we have found a counterexample on the orientable surface of genus $4$~\cite{FK17_genus4}.

\begin{theorem}[{\cite{FK17_genus4}}]
\label{theorem_counter_genus4}
There is a graph $G$ with $\mathrm{g}(G)=5$ and $\mathrm{g}_0(G)\le 4$. Consequently, for every positive integer $k$ there is a graph $G$ with $\mathrm{g}(G)=5k$ and  $\mathrm{g}_0(G)\le 4k$.
\end{theorem}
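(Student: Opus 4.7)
\smallskip
\noindent\textbf{Proof plan.}
My overall strategy has two parts: (i) exhibit a concrete ``base'' graph $G_0$ with $\mathrm{g}(G_0)=5$ and $\mathrm{g}_0(G_0)\le 4$, and (ii) scale it up by a disjoint-union (or one-vertex amalgamation) construction. For (i) I would search among small graphs obtained from a simple high-genus gadget---for instance $K_{3,3}$ or $K_5$---by adding controlled redundant structure such as parallel subdivided copies of an edge, a few extra high-degree vertices joined via many internally disjoint paths, or an identification arising from a double cover. The design goal is a graph whose genus is forced to be exactly $5$ by a parity/Euler-type obstruction, but which nevertheless admits a drawing on $M_4$ where the \emph{unavoidable} crossings can be concentrated on pairs of \emph{adjacent} edges, where parity is not counted, rather than on independent ones.

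Concretely, the upper bound $\mathrm{g}_0(G_0)\le 4$ asks for an explicit independently even drawing on $M_4$. I would describe a rotation system together with edge signatures on $M_4$, realize each edge as a closed arc representing its chosen signature, and then verify the parity condition independent-pair by independent-pair, using the Hanani--Tutte-style trick that rerouting an edge along a $\mathbb{Z}_2$-trivial loop changes only the parities of its crossings with \emph{adjacent} edges. The matching lower bound $\mathrm{g}(G_0)\ge 5$ I would establish either by recognizing $G_0$ as a graph with known genus in the literature, or, more likely, by a direct Euler-genus calculation that rules out an embedding on $M_4$ given $G_0$'s vertex count, edge count, and girth.

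For (ii), let $G$ be the disjoint union of $k$ copies of $G_0$ (or, if one prefers a connected example, the one-vertex amalgamation of $k$ copies). Genus is additive over connected components, and over blocks by the theorem of Battle, Harary, Kodama and Youngs, so $\mathrm{g}(G)=5k$. For the $\mathbb{Z}_2$-genus, given independently even drawings of the $k$ copies of $G_0$ on $M_4$, I can place the copies inside pairwise disjoint discs on the connected sum $M_{4k}=M_4\#\cdots\#M_4$ to obtain an independently even drawing of $G$, so $\mathrm{g}_0(G)\le 4k$.

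The main obstacle is clearly step (i): producing a small graph whose genus is provably $5$ but whose $\mathbb{Z}_2$-genus is at most $4$. The difficulty is that $M_4$ is already topologically rich, so any lower bound that forces an extra handle must still survive once independent edges are allowed to cross evenly, and conversely the independently even drawing on $M_4$ must carefully arrange all unavoidable crossings to be between adjacent edges. Without a clever symmetric construction there is no a priori reason such a graph should exist; this is precisely the point at which the recent work~\cite{FK17_genus4} had to supply a genuinely new idea, and the rest of the argument is essentially bookkeeping on top of it.
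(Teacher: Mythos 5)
Your step (ii) --- scaling from a base graph $G_0$ to $k$ copies via disjoint union, invoking the additivity of genus over connected components (Battle--Harary--Kodama--Youngs) for $\mathrm{g}(G)=5k$ and a disjoint-disc drawing on the connected sum $M_{4k}$ for $\mathrm{g}_0(G)\le 4k$ --- is correct and is exactly the bookkeeping implied by the word ``Consequently'' in the statement. The genuine gap is step (i): you do not actually exhibit a graph $G_0$ with $\mathrm{g}(G_0)=5$ and $\mathrm{g}_0(G_0)\le 4$. What you provide is a search strategy (modify $K_{3,3}$ or $K_5$ by adding redundant structure, look for a parity obstruction to an $M_4$-embedding, hunt for an independently even $M_4$-drawing where unavoidable crossings land on adjacent pairs), not a construction, and you explicitly acknowledge that this is where the new idea of \cite{FK17_genus4} is needed. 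Since the existence of such a $G_0$ is precisely the content of the theorem --- the present paper itself does not prove it but cites \cite{FK17_genus4}, where the explicit construction and both bounds occupy essentially the entire argument --- the proposal does not amount to a proof. In particular, nothing in your plan rules out the a priori possibility that every genus-$5$ graph already has $\mathbb{Z}_2$-genus $5$; the lower bound $\mathrm{g}(G_0)\ge 5$ cannot come from a bare Euler-formula count alone (that only bounds genus in terms of $|V|$, $|E|$, girth, and would apply equally to the independently even drawing via the weak Hanani--Tutte theorem), so a more refined obstruction, sensitive to actual edge crossings and not just to homology, is unavoidable, and you have not identified one.
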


The \emph{Euler genus} $\mathrm{eg}(G)$ of $G$ is the minimum $g$ such that $G$ has an embedding on a surface of Euler genus $g$. The \emph{Euler $\mathbb{Z}_2$-genus} $\mathrm{eg}_0(G)$ of $G$ is the minimum $g$ such that $G$ has an independently even drawing on a surface of Euler genus $g$.

Schaefer and {\v{S}}tefankovi{\v{c}}~\cite{SS13_block} conjectured that $\mathrm{eg}_0(G)=\mathrm{eg}(G)$ for every graph $G$; this is still an open question. They also posed the following natural ``approximate'' questions.

\begin{problem}[{\cite{SS13_block}}]
\label{problem_bounded_genus}
Is there a function $f$ such that $\mathrm{g}(G) \le f(\mathrm{g}_0(G))$ for every graph $G$? Is there a function $f$ such that $\mathrm{eg}(G) \le f(\mathrm{eg}_0(G))$ for every graph $G$?
\end{problem}

We give a positive answer to Problem~\ref{problem_bounded_genus} for several families of graphs, which we conjectured to be ``unavoidable'' as minors in graphs of large genus. Recently we have found that a similar Ramsey-type statement by Robertson and Seymour, which we formulate as Conjecture~\ref{conjecture_folklore}, is a folklore unpublished result in the graph-minors community. Together, these results would imply a positive solution to Problem~\ref{problem_bounded_genus} for all graphs.

In particular, Robertson and Seymour conjectured that every graph of a sufficiently large Euler genus contains as a minor one of the following \emph{$t$-Kuratowski graphs}: $K_{3,t}$, or $t$ copies of $K_5$ or $K_{3,3}$ sharing at most two common vertices. To obtain a similar statement for graphs of large genus, we need to add the projective $t\times t$ grid (or $t$-wall) to the list of unavoidable minors. We show that the $\mathbb{Z}_2$-genus of graphs in these families is equal to their genus. 

Our main technical tool is the intersection form over $\mathbb{Z}_2$, counting the parity of crossings between cycles on a given surface, and the fact that the rank of the intersection form is equal to the Euler genus of the surface.
 
We state the results in detail in Sections~\ref{section_ramsey_type} and~\ref{section_results} after giving necessary definitions in Section~\ref{section_definitions}.

A positive answer to Problem~\ref{problem_bounded_genus} would also have the following applications: it would give a linear upper bound on the number of edges of a graph with an independently even drawing on a fixed orientable surface, and thus imply a generalization of the crossing lemma on orientable surfaces for several notions of the crossing number, including the pair-crossing number~\cite{Ky20_issue}.

\section{Preliminaries}
\label{section_definitions}

\subsection{Graphs on surfaces}

We refer to the monograph by Mohar and Thomassen~\cite{MT01_graphs} for a detailed introduction into surfaces and graph embeddings.
By a {\em surface} we mean a connected compact $2$-dimensional topological manifold. Every surface is either {\em orientable} (has two sides) or {\em nonorientable} (has only one side). Every orientable surface $S$ is obtained from the sphere by attaching $g \ge 0$ \emph{handles}, and this number $g$ is called the {\em genus} of $S$.
Similarly, every nonorientable surface $S$ is obtained from the sphere by attaching $g \ge 1$ \emph{crosscaps}, and this number $g$ is called the {\em (nonorientable) genus} of $S$. The simplest orientable surfaces are the sphere (with genus $0$) and the torus (with genus $1$). The simplest nonorientable surfaces are the projective plane (with genus $1$) and the Klein bottle (with genus $2$). We denote the orientable surface of genus $g$ by $M_g$, and the nonorientable surface of genus $g$ by $N_g$.

Let $G=(V,E)$ be a graph with no multiple edges and no loops,
and let $S$ be a surface.  
A \emph{drawing} of $G$ on $S$ is a representation of $G$ where every vertex is represented by a unique point in $S$ and every
edge $e$ joining vertices $u$ and $v$ is represented by a simple curve in $S$ joining the two points that represent $u$ and $v$. If it leads to no confusion, we do not distinguish between
a vertex or an edge and its representation in the drawing and we use the words ``vertex'' and ``edge'' in both contexts. We require that in a drawing no edge passes through a vertex,
no two edges touch, every edge has only finitely many intersection points with other edges and no three edges cross at the same inner point. In particular, every common point of two edges is either their common endpoint or a crossing.

A drawing of $G$ on $S$ is an \emph{embedding} if no two edges cross. A \emph{face} of an embedding of $G$ on $S$ is a connected component of the topological space obtained from $S$ by removing all the edges and vertices of $G$. A \emph{$2$-cell embedding} is an embedding whose each face is homeomorphic to an open disc. The \emph{facewidth} (also called \emph{representativity}) $\mathrm{fw}(\mathcal{E})$ of an embedding $\mathcal{E}$ on a surface $S$ of positive genus is the smallest nonnegative integer $k$ such that there is a closed noncontractible curve in $S$ intersecting $\mathcal{E}$ in $k$ vertices.

The \emph{rotation} of a vertex $v$ in a drawing of $G$ on an orientable surface is the clockwise cyclic order of the edges incident to $v$. We will represent the rotation of $v$ by the cyclic order of the other endpoints of the edges incident to $v$. The {\em rotation system} of a drawing is the set of rotations of all vertices. 

The \emph{Euler characteristic} of a surface $S$ of genus $g$, denoted by $\chi(S)$, is defined as $\chi(S)=2-2g$ if $S$ is orientable, and $\chi(S)=2-g$ if $S$ is nonorientable. Equivalently, if $v$, $e$ and $f$ denote the numbers of vertices, edges and faces, respectively, of a $2$-cell embedding of a graph on $S$, then $\chi(S)=v-e+f$. The \emph{Euler genus} $\mathrm{eg}(S)$ of $S$ is defined as $2-\chi(S)$. In other words, the Euler genus of $S$ is equal to the genus of $S$ if $S$ is nonorientable, and to twice the genus of $S$ if $S$ is orientable. This implies the following inequalities for the different notions of genus of a graph $G$, defined in the introduction:

\begin{equation}
\label{eq_genusy}
\mathrm{eg}(G) \le 2\mathrm{g}(G) \ \text{ and } \ \mathrm{eg}_0(G) \le 2\mathrm{g}_0(G).
\end{equation}

An edge in a drawing is {\em even\/} if it crosses every other edge an even number of times.
A drawing of a graph is \emph{even} if all its edges are even.
A drawing of a graph is \emph{independently even} if every pair of independent edges in the drawing crosses an even number of times.
In the literature, the notion of \emph{$\mathbb{Z}_2$-embedding} is used to denote both an even drawing~\cite{CN00_thrackles} and an independently even drawing~\cite{SS13_block}.

The \emph{embedding scheme} of a drawing $\mathcal{D}$ on a surface $S$ consists of 
the rotation system and a signature $+1$ or $-1$ assigned to every edge, representing the parity of the number of crosscaps the edge is passing through. If $S$ is orientable, the embedding scheme can be given just by the 
rotation system. The following weak analogue of the Hanani--Tutte theorem was proved by Cairns and Nikolayevsky~\cite{CN00_thrackles} for orientable surfaces and then extended by
Pelsmajer, Schaefer and {\v{S}}tefankovi{\v{c}}~\cite{PSS09_surfaces} to nonorientable surfaces. Loebl and Masbaum~\cite[Theorem 5]{LoMa11_arf} obtained an alternative proof for orientable surfaces.

\begin{theorem}[The weak Hanani--Tutte theorem on surfaces{~\cite[Lemma 3]{CN00_thrackles}, \cite[Theorem 3.2]{PSS09_surfaces}}]
\label{theorem_weaksurface}
If a graph $G$ has an even drawing $\mathcal{D}$ on a surface $S$, then $G$ has an embedding on $S$ that preserves the embedding scheme of $\mathcal{D}$. 
\end{theorem}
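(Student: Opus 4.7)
I plan to prove Theorem~\ref{theorem_weaksurface} by induction on the number of crossings in $\mathcal{D}$. The base case of zero crossings is immediate, since then $\mathcal{D}$ is already an embedding on $S$ that obviously realizes its own embedding scheme. For the inductive step, the goal is to describe a local redrawing operation that strictly decreases a suitable ``crossing complexity'' measure while preserving both the embedding scheme and the property of being even.

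Concretely, pick two edges $e$ and $f$ that cross in $\mathcal{D}$. By evenness, they cross at least twice; let $p, q$ be two crossings that are consecutive along $e$, and let $\alpha \subset e$, $\beta \subset f$ be the subarcs joining $p$ and $q$. Then $\gamma := \alpha \cup \beta$ is a closed curve on $S$. Because both $e$ and $f$ are even, every other edge of $G$ meets $\gamma$ in an even number of points, and hence so does every closed curve that can be written as a concatenation of edges of $G$. After extending $G$ by additional curves on $S$ if necessary, so that the cycles of the enlarged graph represent a basis of $H_1(S; \mathbb{Z}_2)$, this forces $\gamma$ to be null-homologous modulo $2$, hence to bound a subsurface $\Sigma \subset S$. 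The redrawing move then replaces $\beta$ by a parallel copy of $\alpha$ pushed slightly into $\Sigma$, erasing the crossings $p$ and $q$. The rotations at all vertices are untouched because the move happens in the interior of $f$; and for every third edge $g$, the number of new crossings $g$ acquires with the new $\beta$ has the same parity as the number of times $g$ enters $\Sigma$ across $\gamma$, which is even by hypothesis, so evenness is preserved across all pairs.

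The main obstacle is guaranteeing genuine progress: the move removes two $e$--$f$ crossings but may introduce many new $f$--$g$ crossings for edges $g$ passing through $\Sigma$. To make the induction go through, I would pick $\gamma$ so that the subsurface $\Sigma$ is \emph{innermost}, i.e.\ minimizing the number of edge-arcs contained in its interior, which then forces the total crossing count to drop; alternatively, one can reduce to showing that a single fixed edge $e$ can always be made crossing-free and then remove it to apply induction to $G - e$.

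The nonorientable case adds a further technical layer, because the signatures in the embedding scheme must be preserved and the curve $\gamma$ may be one-sided; then $\gamma$ does not bound a subsurface in the usual sense. Here one should work directly in $H_1(S; \mathbb{Z}_2)$ using the signed edges of $\mathcal{D}$, so that the role of ``bounding a subsurface'' is played by a mod $2$ homology relation that also controls how many crosscaps $\gamma$ passes through. The redrawing move then becomes a ``push across a crosscap'' variant in which $\beta$ is replaced by a copy of $\alpha$ together with the appropriate correction of its signature, so that the assigned edge signs are unchanged even though the geometric representative may traverse a different set of crosscaps. Tracking this bookkeeping carefully, and verifying that the move still preserves evenness pair by pair, is the main technical hurdle of the argument.
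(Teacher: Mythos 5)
The paper does not prove Theorem~\ref{theorem_weaksurface}; it cites it (Cairns--Nikolayevsky for orientable $S$, Pelsmajer--Schaefer--\v{S}tefankovi\v{c} for nonorientable $S$), and both known proofs are \emph{global}: they take the embedding scheme of $\mathcal{D}$, look at the $2$-cell embedding that this scheme combinatorially determines, and then use a homological/Euler-characteristic count to show that the resulting surface has Euler genus at most that of $S$ (and the correct orientability). They do not proceed by a local crossing-removal induction. So your approach is genuinely different, and unfortunately it has a fatal gap at its very first step.

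The gap is the claim that the bigon curve $\gamma=\alpha\cup\beta$ is null-homologous. You argue that, because $e$ and $f$ are even, every other edge of $G$ meets $\gamma$ evenly. That is false: evenness gives $|g\cap e|\equiv 0$ and $|g\cap f|\equiv 0\pmod 2$ for every edge $g$, but $\alpha$ and $\beta$ are proper \emph{subarcs} of $e$ and $f$, and a subarc of an even edge can meet $g$ an odd number of times. So nothing controls $|g\cap\gamma|$, and indeed on a torus (or any surface of positive genus) two even edges can bound a bigon whose boundary curve is nonseparating. The subsequent step of ``extending $G$ by additional curves so that the cycles of the enlarged graph form a homology basis'' makes it worse: the new curves are not edges of $G$, so evenness says nothing about their crossings with $\gamma$, and you cannot conclude that $\gamma$ pairs trivially with a homology basis. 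Even setting all of that aside, the ``push across $\Sigma$'' move is exactly the kind of bigon removal that is known not to preserve (independent) evenness without extra structure, and the ``innermost'' fix is not sufficient: an innermost null-homologous bigon can still contain arcs of other edges, and the parity argument for new $f$--$g$ crossings relies on the very fact about $\gamma$ that you have not established. Finally, in the nonorientable case the sketch explicitly leaves the signature bookkeeping as an unproved hurdle, but that is precisely where the cited proofs do real work. To fix the argument you would have to abandon the local induction and instead argue globally from the embedding scheme, as in the cited sources.
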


A simple closed curve $\gamma$ in a surface $S$ is \emph{$1$-sided} if it has a small neighborhood homeomorphic to the M\"obius strip, and \emph{$2$-sided} if it has a small neighborhood homeomorphic to the cylinder. We say that $\gamma$ is \emph{separating} in $S$ if the complement $S\setminus \gamma$ has two components, and \emph{nonseparating} if $S\setminus \gamma$ is connected. Note that on an orientable surface every simple closed curve is $2$-sided, and every $1$-sided simple closed curve (on a nonorientable surface) is nonseparating.

\subsection{Special graphs}

\subsubsection{Projective grids and walls}
For a positive integer $n$ we denote the set $\{1,\dots, n\}$ by $[n]$. Let $r,s\ge 3$. The \emph{projective $r\times s$ grid} is the graph with vertex set $[r]\times[s]$ and edge set 
\[
\{\{(i,j),(i',j')\}; |i-i'|+|j-j'|=1\} \ \cup \ \{\{(i,1),(r+1-i,s)\};i\in[r]\}.
\]
In other words, the projective $r\times s$ grid is obtained from the planar $r \times (s+1)$ grid by identifying pairs of opposite vertices and edges in its leftmost and rightmost column. See Figure~\ref{obr_grid_wall}, left.
The projective $t\times t$ grid has an embedding on the projective plane with facewidth $t$. By a result of Robertson and Vitray~\cite{RoVi90_representativity},~\cite[p. 171]{MT01_graphs}, the embedding is unique if $t\ge 4$. Hence, for $t\ge 4$ the genus of the projective $t\times t$ grid is equal to $\lfloor t/2 \rfloor$ by a result of Fiedler, Huneke, Richter and Robertson~\cite{FHRR95_projective},~\cite[Theorem 5.8.1]{MT01_graphs}.

Since grids have vertices of degree $4$, it is more convenient for us to consider their subgraphs of maximum degree $3$, called walls.
For an odd $t\ge 3$, a \emph{projective $t$-wall} is obtained from the projective $t\times (2t-1)$ grid 
by removing edges $\{(i,2j),(i+1,2j)\}$ for $i$ odd and $1\le j\le t-1$, and edges 
$\{(i,2j-1),(i+1,2j-1)\}$ for $i$ even and $1\le j\le t$. Similarly, for an even $t\ge 4$, a \emph{projective $t$-wall} is obtained from the projective $t\times 2t$ grid 
by removing edges $\{(i,2j),(i+1,2j)\}$ for $i$ odd and $1\le j\le t$, and edges 
$\{(i,2j-1),(i+1,2j-1)\}$ for $i$ even and $1\le j\le t$. The projective $t$-wall has maximum degree $3$ and can be embedded on the projective plane
as a ``twisted wall'' with inner faces bounded by $6$-cycles forming the ``bricks'', and with the ``outer'' face bounded by a $(4t-2)$-cycle for $t$ odd and a $4t$-cycle for $t$ even. See Figure~\ref{obr_grid_wall}, right. This embedding has facewidth $t$ and so again, for $t\ge 4$ the projective $t$-wall has genus $\lfloor t/2 \rfloor$. It is easy to see that the projective $3$-wall has genus $1$ since it contains a subdivision of $K_{3,3}$ and embeds on the torus.

\begin{figure}
\begin{center}
\includegraphics{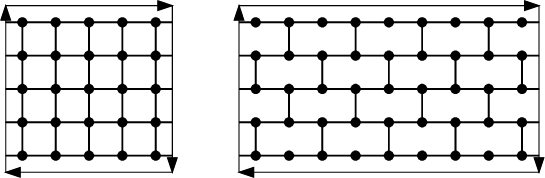}
\end{center}
\caption{Left: a projective $5\times 5$ grid. Right: a projective $5$-wall.}
\label{obr_grid_wall}
\end{figure}

\subsubsection{Kuratowski graphs}

A graph is called a \emph{$t$-Kuratowski} graph~\cite{Seym17} if it is one of the following:
\begin{enumerate}
\item[a)] $K_{3,t}$,
\item[b)] a disjoint union of $t$ copies of $K_5$,
\item[c)] a disjoint union of $t$ copies of $K_{3,3}$,
\item[d)] a graph obtained from $t$ copies of $K_5$ by identifying one vertex from each copy to a single common vertex,
\item[e)] a graph obtained from $t$ copies of $K_{3,3}$ by identifying one vertex from each copy to a single common vertex,
\item[f)] a graph obtained from $t$ copies of $K_5$ by identifying a pair of vertices from each copy to a common pair of vertices,
\item[g)] a graph obtained from $t$ copies of $K_{3,3}$ by identifying a pair of adjacent vertices from each copy to a common pair of vertices,
\item[h)] a graph obtained from $t$ copies of $K_{3,3}$ by identifying a pair of nonadjacent vertices from each copy to a common pair of vertices.
\end{enumerate}

See Figure~\ref{obr_kuratowski} for an illustration.

\begin{figure}
\begin{center}
\includegraphics{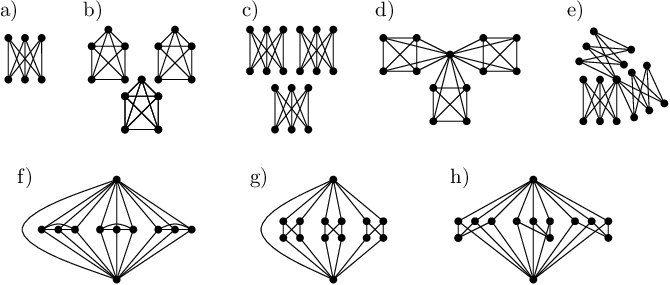}
\end{center}
\caption{The eight $3$-Kuratowski graphs.}
\label{obr_kuratowski}
\end{figure}

The genus of each of the $t$-Kuratowski graphs is known precisely. The genus of $K_{3,t}$ is $\lceil(t-2)/4\rceil$~\cite{Bo78_Kmn,Ri65_Kmn},~\cite[Theorem 4.4.7]{MT01_graphs},~\cite[Theorem 4.5.3]{GrTu01_theory}, which coincides with the lower bound from Euler's formula.
The genus of $t$ copies of $K_5$ or $K_{3,3}$ sharing at most one vertex is $t$ by the additivity of genus over blocks and connected components~\cite{BHKY62_additivity},~\cite[Theorem 4.4.2]{MT01_graphs},~\cite[Theorem 3.5.3]{GrTu01_theory}. Finally, from a general formula by Decker, Glover and Huneke~\cite{DGH85_2amalgamation} it follows that the genus of $t$ copies of $K_5$ or $K_{3,3}$ sharing a pair of adjacent or nonadjacent vertices is $\lceil t/2 \rceil$ if $t>1$: cases f) and g) follow from their proof of Corollary 0.2, case h) follows from their Corollary 2.4 after realizing that $\mu(K_{3,3})=3$ if $x,y$ are nonadjacent in $K_{3,3}$.

The Euler genus of each of the $t$-Kuratowski graphs is also known precisely.
The Euler genus of $K_{3,t}$ is $\lceil(t-2)/2\rceil$~\cite{Bo78_Kmn,Ri65b_Kmn_Euler}. The Euler genus of $t$ copies of $K_5$ or $K_{3,3}$ sharing one vertex is $t$ by the additivity of Euler genus over blocks~\cite[Corollary 2]{StBe77_blocks},~\cite[Theorem 1]{Mi87_additivity},~\cite[Theorem 4.4.3]{MT01_graphs}. The additivity of Euler genus over connected components follows almost trivially: every embedding of a disconnected graph with components $G_1$, $G_2$ on a surface can be turned into an embedding of a connected graph on the same surface by adding an edge joining $G_1$ with $G_2$. Miller~\cite[Theorem 1]{Mi87_additivity} proved that Euler genus is also additive over edge-amalgamations, which implies that the Euler genus of $t$ copies of $K_5$ or $K_{3,3}$ sharing a pair of adjacent vertices it $t$. Miller~\cite[Theorem 27]{Mi87_additivity} also proved a superadditivity of the Euler genus over $2$-amalgamations. Richter~\cite[Theorem 1]{R87_2amalgamation} proved a precise formula for the Euler genus of $2$-amalgamations with respect to a pair of nonadjacent vertices.
Since the graph obtained from $K_{3,3}$ by adding one edge has an embedding in the projective plane, Miller's~\cite{Mi87_additivity} and Richter's~\cite{R87_2amalgamation} results also imply that the Euler genus of $t$ copies of $K_{3,3}$ sharing a pair of nonadjacent vertices is $t$.

\section{Ramsey-type results}
\label{section_ramsey_type}

The following Ramsey-type statement for graphs of large Euler genus is a folklore unpublished result.

\begin{conjecture}[Robertson--Seymour{~\cite{BKMM_7connected,Seym17}}, unpublished]
\label{conjecture_folklore}
There is a function $g$ such that for every $t\ge 3$, every graph of Euler genus $g(t)$ contains a $t$-Kuratowski graph as a minor.
\end{conjecture}

For $7$-connected graphs, Conjecture~\ref{conjecture_folklore} follows from the result of B{\"o}hme, Ka\-wa\-ra\-ba\-yashi, Maharry and Mohar~\cite{BKMM_7connected}, stating that for every positive integer~$t$, every sufficiently large $7$-connected graph contains $K_{3,t}$ as a minor. B{\"o}hme et al.~\cite{BKMM_linearconnect} later generalized this to graphs of larger connectivity and $K_{a,t}$ minors for every fixed $a>3$. Fr\"ohlich and M\"uller~\cite{FrMu11_alternative} gave an alternative proof of this generalized result.

Christian, Richter and Salazar~\cite{CRS15_continuum} proved a similar statement for graph-like continua.

We obtain an analogous Ramsey-type statement for graphs of large genus as an almost direct consequence of Conjecture~\ref{conjecture_folklore}.

\begin{theorem}
\label{theorem_ramsey}
Conjecture~\ref{conjecture_folklore} implies that there is a function $h$ such that for every $t\ge 3$, every graph of genus $h(t)$ contains, as a minor, a $t$-Kuratowski graph or the projective $t$-wall.
\end{theorem}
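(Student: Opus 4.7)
The plan is a case analysis on the Euler genus of $G$. Given the function $g$ from Claim~\ref{claim_folklore}, let $h(t)$ be chosen large in terms of $g(t)$ (specified at the end). Let $G$ satisfy $\mathrm{g}(G) \ge h(t)$. If $\mathrm{eg}(G) \ge g(t)$, Claim~\ref{claim_folklore} immediately yields a $t$-Kuratowski minor. Otherwise $\mathrm{eg}(G) < g(t)$, and since $h(t) > g(t)/2$, inequality~\eqref{eq_genusy} forces any Euler-genus-optimal embedding of $G$ to lie on a \emph{nonorientable} surface $N_k$ with $k = \mathrm{eg}(G) < g(t)$; indeed, an orientable optimal embedding would give $\mathrm{g}(G)\le\mathrm{eg}(G)/2 < g(t)/2 < h(t)$.

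It then suffices to establish the following auxiliary claim: for every $k\ge 1$ and $t\ge 3$ there is an integer $H(k,t)$ such that every graph with a $2$-cell embedding on $N_k$ and orientable genus at least $H(k,t)$ contains the projective $t$-wall as a minor. I would prove this by induction on $k$. The base $k=1$ uses the Fiedler--Huneke--Richter--Robertson bound (in the form cited in the excerpt): a projective-planar graph whose projective-plane embedding has facewidth $w$ has orientable genus at least roughly $\lfloor w/2\rfloor$, so large orientable genus forces large facewidth. By Robertson--Vitray's uniqueness of high-facewidth projective-plane embeddings, the embedding is grid-like, and the projective $t$-wall is extracted as a topological minor by selecting $t$ nested noncontractible cycles together with a system of transversal paths. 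For the inductive step $k\ge 2$, consider an embedding of $G$ on $N_k$: either some $1$-sided simple closed curve has facewidth $\ge f(t)$, in which case a neighborhood yields a projective-planar subgraph of $G$ of high facewidth and the base case applies; or every $1$-sided curve has small facewidth, in which case one straightens such a curve along edges to obtain a short $1$-sided cycle $C$ in $G$, contracts $C$ to produce a minor $G/C$ of $G$ embedded on a surface of Euler genus $k-1$ with orientable genus at least $\mathrm{g}(G)-|C|$, and applies the induction hypothesis to $G/C$. A projective $t$-wall minor of $G/C$ is automatically a minor of $G$.

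Taking $h(t):=\max\{g(t),\,H(g(t)-1,t)\}$ then proves the theorem. The main obstacle is the inductive step: translating a low-facewidth $1$-sided \emph{curve} in $N_k$ into a short $1$-sided \emph{cycle} in $G$ suitable for contraction, and controlling the drop in orientable genus under the contraction (so that the hypothesis of the inductive statement is preserved for $G/C$). A secondary technicality is the base-case minor extraction from a high-facewidth projective-plane embedding, which should follow from its grid-like rigidity combined with a standard transversal-path argument joining the chosen concentric cycles.
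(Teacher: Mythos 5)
Your opening reduction (split on whether $\mathrm{eg}(G)\ge g(t)$, and if not, work with a nonorientable embedding of Euler genus $k<g(t)$) is exactly the paper's. The divergence is in the inductive step, and the gap you flag there is not a technicality to be patched --- it is the heart of the matter, and your plan for it does not work.

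You propose to find a short $1$-sided cycle $C$ in $G$, contract it, and recurse on $G/C$, which you assert has orientable genus at least $\mathrm{g}(G)-|C|$. That inequality is equivalent to $\mathrm{g}(G)\le \mathrm{g}(G/C)+|C|$. To pass from $G/C$ back to $G$ one must perform $|C|-1$ vertex splits (the inverse of identifying two \emph{nonadjacent} vertices) followed by $|C|$ edge additions. Each edge addition raises genus by at most $1$, but a \emph{single} vertex split can raise orientable genus by an arbitrarily large amount: take a large $3$-connected planar grid with one apex vertex $w$ joined to every boundary vertex (planar), and split $w$ into two nonadjacent vertices $u,v$ whose neighborhoods alternate around the boundary; the resulting graph has genus growing with the grid size. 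So no bound of the form $\mathrm{g}(G)\le \mathrm{g}(G/C)+f(|C|)$ holds in general, and the inductive hypothesis for $G/C$ may simply fail. Two further problems: a curve realizing the facewidth passes through $w$ vertices but between consecutive vertices runs through \emph{faces}, not along edges, so there is no reason $G$ contains a short cycle $C$ tracking it; and your dichotomy considers only $1$-sided curves, whereas the short noncontractible curve may be $2$-sided (nonseparating or separating), cases your induction does not treat.

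The paper's proof avoids all three problems by \emph{cutting} the surface along the short noncontractible curve $\gamma$ instead of contracting a cycle of the graph. Cutting splits each vertex of $G$ on $\gamma$ into two copies and yields a graph $G'$ embedded on a surface of strictly smaller Euler genus, with $G$ recoverable from $G'$ by at most $|\gamma\cap G|$ vertex-pair \emph{gluings}. Crucially, each gluing raises orientable genus by at most $1$, so the paper only ever needs the easy one-sided bound $\mathrm{g}(G)\le \mathrm{g}(G')+|\gamma\cap G|$; no lower bound on the genus of a contraction is ever required. The recursion (Proposition~\ref{prop_wall}) handles all three curve types and tracks the total number of gluings by an explicit potential function $w(k',i,k,t)$. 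In the end either the facewidth is large and Theorem~\ref{veta_facewidth} (together with Proposition~\ref{prop_face}) extracts the projective $t$-wall as a surface minor, or $G$ is obtained from a graph of genus at most $k'<k/2$ by a bounded number of gluings, forcing $\mathrm{g}(G)$ to be bounded, which contradicts $\mathrm{g}(G)\ge h(t)$ for $h(t)$ chosen large enough. To repair your argument you would essentially have to replace ``contract a short cycle'' by ``cut along a short curve and count the inverse gluings,'' at which point you recover the paper's proof.
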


We give a detailed proof of Theorem~\ref{theorem_ramsey} in Section~\ref{section_ramsey}.

\section{Our results}
\label{section_results}

As our main result we complete a proof that the $\mathbb{Z}_2$-genus of each $t$-Kuratowski graph and the projective $t$-wall grows to infinity with $t$; in fact, the $\mathbb{Z}_2$-genus of each of these graphs is equal to their genus. Analogously, we also show that the Euler $\mathbb{Z}_2$-genus of each $t$-Kuratowski graph is equal to its Euler genus.
Schaefer and \v{S}tefankovi\v{c}~\cite{SS13_block} proved this for those $t$-Kuratowski graphs that consist of $t$ copies of $K_5$ or $K_{3,3}$ sharing at most one vertex. For the projective $t$-wall, the result follows directly from the weak Hanani--Tutte theorem on orientable surfaces~\cite[Lemma 3]{CN00_thrackles}: indeed, all vertices of the projective $t$-wall have degree at most $3$, therefore pairs of adjacent edges crossing oddly in an independently even drawing can be redrawn in a small neighborhood of their common vertex so that they cross evenly; see Figure~\ref{obr_deg3}. Then the weak Hanani--Tutte theorem can be applied. Thus, the remaining cases are $t$-Kuratowski graphs of type a), f), g) and h).

\begin{figure}
\begin{center}
\includegraphics{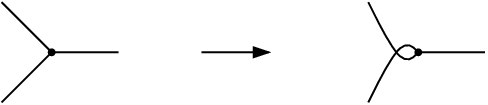}
\end{center}
\caption{Changing the parity of the number of crossings of a pair of edges incident to a vertex of degree $3$.}
\label{obr_deg3}
\end{figure}

\begin{theorem}
\label{theorem_z2}
For every $t\ge 3$, the $\mathbb{Z}_2$-genus of each $t$-Kuratowski graph of type a), f), g) and h) is equal to its genus, and also its Euler $\mathbb{Z}_2$-genus is equal to its Euler genus. 
In particular,
\begin{enumerate}
\item[{\rm a)}] $\mathrm{g}_0(K_{3,t})\ge\lceil(t-2)/4\rceil$, $\mathrm{eg}_0(K_{3,t})\ge\lceil(t-2)/2\rceil$, and
\item[{\rm b)}] if $G$ consists of $t$ copies of $K_5$ or $K_{3,3}$ sharing a pair of adjacent or nonadjacent vertices, then $\mathrm{g}_0(G) \ge\lceil t/2 \rceil$ and $\mathrm{eg}_0(G) \ge t$.
\end{enumerate}
This implies, together with the result of Schaefer and \v{S}tefankovi\v{c}~\cite{SS13_block}, that for every $t\ge 3$, the $\mathbb{Z}_2$-genus of each $t$-Kuratowski graph and the projective $t$-wall is equal to its genus, and the Euler $\mathbb{Z}_2$-genus of each $t$-Kuratowski graph is equal to its Euler genus. 
\end{theorem}

Combining Theorem~\ref{theorem_z2} with Theorem~\ref{theorem_ramsey} we get the following implication.

\begin{corollary}
Conjecture~\ref{conjecture_folklore} implies a positive answer to both parts of Problem~\ref{problem_bounded_genus}.
\end{corollary}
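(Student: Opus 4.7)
The plan is to argue by contrapositive, chaining together the three results already in hand: Theorem~\ref{theorem_ramsey} (for the orientable case) and Claim~\ref{claim_folklore} (for the Euler case), Corollary~\ref{cor_main}, and the minor-monotonicity of $\mathbb{Z}_2$-genus.

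For the genus case, the strategy is as follows. Suppose $\mathrm{g}(G) \ge h(t)$, where $h$ is the function supplied by Theorem~\ref{theorem_ramsey}. Then $G$ contains as a minor either a $t$-Kuratowski graph or the projective $t$-wall, and by Corollary~\ref{cor_main} any such minor $H$ satisfies $\mathrm{g}_0(H) = \mathrm{g}(H) \ge \lceil (t-2)/4 \rceil$. Provided $\mathrm{g}_0$ is minor-monotone, this forces $\mathrm{g}_0(G) \ge \lceil (t-2)/4 \rceil$, so defining $f(k) := h(4k+3)$ gives $\mathrm{g}(G) < f(\mathrm{g}_0(G))$. The Euler-genus case proceeds identically, using Claim~\ref{claim_folklore} in place of Theorem~\ref{theorem_ramsey} and the Euler $\mathbb{Z}_2$-genus half of Corollary~\ref{cor_main} to produce an analogous $f'$ with $\mathrm{eg}(G) \le f'(\mathrm{eg}_0(G))$.

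The only ingredient not already explicit in the paper is the minor-monotonicity of $\mathrm{g}_0$ and $\mathrm{eg}_0$, which I would either cite from earlier work of Schaefer and \v{S}tefankovi\v{c} or verify directly in a short lemma. Subgraph-monotonicity is immediate, since restricting an independently even drawing to a subgraph is still independently even. For edge contraction of $uv$ in an independently even drawing $\mathcal{D}$, one collapses the arc representing $uv$ continuously to a point and discards any loops or parallel edges thereby created. Any pair $e,f$ of edges independent in $G/uv$ lifts to a pair of edges independent in $G$, so $e$ and $f$ cross evenly in $\mathcal{D}$; moreover the only modification to $e$ or $f$ after contraction is the possible appending of a subarc of $uv$ at one endpoint, and since $f$ is independent of $uv$ it crosses $uv$ evenly, so the parity of $|e\cap f|$ is preserved.

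The main obstacle in the plan is precisely this edge-contraction step: one must check that the contraction can be carried out without introducing new odd crossings between independent edges, which is a small but nontrivial piece of geometric bookkeeping. Once it is pinned down, the combination of the Ramsey-type statements with Corollary~\ref{cor_main} yields both bounds immediately.
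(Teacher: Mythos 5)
Your proposal is correct and is exactly the paper's argument, which the paper leaves implicit in the one-liner ``Combining Corollary~\ref{cor_main} with Theorem~\ref{theorem_ramsey};'' you rightly spell out the missing glue, namely the minor-monotonicity of $\mathrm{g}_0$ and $\mathrm{eg}_0$. One small fix in the contraction sketch: the tail appended to an edge incident to $v$ must be the \emph{entire} arc representing $uv$ (obtained by sliding $v$ all the way to $u$), not a proper subarc, since the evenness of $\mathrm{cr}(uv,f)$ controls the parity change only when the full arc is traversed.
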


\section{Unavoidable minors of large genus}
\label{section_ramsey}

In this section we prove Theorem~\ref{theorem_ramsey}.

\subsection{Tools and preparations}

We will need the following classical result by Robertson and Seymour~\cite{RoSe90_VII_paths} about surface minors.
\emph{Surface minors} are defined for embeddings analogously as minors for graphs, by deleting and contracting edges on the underlying surface~\cite{MT01_graphs}.

\begin{theorem}[{\cite{RoSe90_VII_paths},~\cite[Theorem 3.5]{KaMo07_survey},~\cite[Theorem 5.2]{Mo01_survey},~\cite[Theorem 5.9.2]{MT01_graphs}}]
\label{veta_facewidth}
For every surface $S$ and every embedding $\mathcal{H}$ of a graph $H$ on $S$ there exists a constant $w(\mathcal{H},S)$ such that every embedding of a graph on $S$ with facewidth at least $w(\mathcal{H},S)$ contains $\mathcal{H}$ as a surface minor.
\end{theorem}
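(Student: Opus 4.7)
The plan is to exploit the fact that large facewidth forces $\mathcal{E}$ to be ``locally planar'' in a quantitative sense, so that the model embedding $\mathcal{H}$ can be realized inside $\mathcal{E}$ by choosing branch sets and paths that follow the pattern of $\mathcal{H}$.

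First, I would establish a local-planarity lemma: if $\mathrm{fw}(\mathcal{E}) \ge r$, then every disk $D \subset S$ whose boundary meets $\mathcal{E}$ in fewer than $r$ vertices intersects $\mathcal{E}$ in a planar subgraph. This is because any nonplanar substructure inside $D$, together with the short closure along $\partial D$, would yield a noncontractible curve in $S$ crossing $\mathcal{E}$ at fewer than $r$ vertices, contradicting the facewidth bound.

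Next, I would build the surface minor. Perturb $\mathcal{H}$ slightly so that its vertices lie in the interiors of faces of $\mathcal{E}$ and its edges meet $\mathcal{E}$ transversely. For each vertex $v$ of $\mathcal{H}$, take a small disk $D_v$ in $S$ around $v$; by the local-planarity lemma, $\mathcal{E}\cap D_v$ is planar and can serve as a branch set $B_v$ that contracts to a single vertex. For each edge $e=uv$ of $\mathcal{H}$, route a walk in $\mathcal{E}$ from $\partial B_u$ to $\partial B_v$ that closely follows $e$ in the correct homotopy class in $S$ relative to the vertices of $\mathcal{H}$. Once the walks are chosen to be pairwise internally disjoint and disjoint from every $B_w$ for $w$ not an endpoint, contracting each walk onto a single edge and deleting all other edges of $\mathcal{E}$ yields an embedded graph isotopic to $\mathcal{H}$ in $S$, exhibiting $\mathcal{H}$ as a surface minor of $\mathcal{E}$.

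The technical heart of the argument, and the main obstacle, is to choose $w(\mathcal{H},S)$ large enough to guarantee that the walks can indeed be routed pairwise internally disjointly; this is the content of Robertson--Seymour's \emph{Graph Minors VII}. A natural strategy is an inductive rerouting argument on the Euler genus of $S$: any collision between two walks lies in a contractible region (by the local-planarity lemma), so local planarity provides enough room to push one walk around the other, and a suitable potential function on collision configurations strictly decreases with each rerouting step. A cleaner alternative, closer in spirit to the treatment in Mohar's survey, lifts both $\mathcal{H}$ and $\mathcal{E}$ to the universal cover $\tilde S$ (which is the plane for $\mathrm{eg}(S)\ge 1$, or the sphere), solves a planar disjoint-paths problem to find the pattern of walks there, and projects back down to $S$; this projection is valid provided the facewidth exceeds the combinatorial diameter of $\mathcal{H}$ on $S$, which is exactly how the constant $w(\mathcal{H},S)$ emerges.
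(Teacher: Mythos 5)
This statement is imported by the paper as a known black box --- it is Robertson--Seymour's result from \emph{Graph Minors VII} (see also Mohar--Thomassen, Theorem 5.9.2) and the paper gives no proof of it --- so there is no in-paper argument to compare yours against; I can only assess your sketch on its own terms, and it has genuine gaps. First, your ``local-planarity lemma'' is vacuous as stated: any subgraph of $\mathcal{E}$ contained in a disk $D\subset S$ is embedded in a disk and hence planar automatically, with no hypothesis on facewidth needed; the nontrivial consequences of large facewidth (e.g.\ that short cycles are contractible, or that bounded-radius balls in the radial graph induce planar pieces) are different statements and your disk formulation gives you no leverage for the construction that follows. Second, and more seriously, the entire difficulty of the theorem is concentrated in the step you defer: showing that the walks realizing the edges of $\mathcal{H}$ can be chosen pairwise internally disjoint, connected to suitable disjoint connected branch sets (note that your small disks $D_v$ need not even meet $\mathcal{E}$, let alone contain a usable connected branch set). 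You acknowledge that this ``is the content of Robertson--Seymour's Graph Minors VII,'' which makes the argument circular; the rerouting-with-potential-function sketch is not specified enough to check, and the universal-cover alternative does not work as described --- the cover of a positive-genus surface is non-compact, the lifted graph is infinite, and disjoint paths upstairs need not project to internally disjoint walks downstairs, since distinct lifts can project onto the same edges of $\mathcal{E}$ or onto crossing walks. To make this rigorous you would need the actual machinery of disjoint paths on surfaces (or the grid/wall structure forced by large representativity), which is precisely what the cited references supply.
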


Let $\mathcal{W}_t$ be an embedding of the projective $t$-wall on the projective plane; see Figure~\ref{obr_grid_wall}, right. With a slight abuse of notation, for each nonorientable surface $N_i$ with $i\ge 2$, we choose an embedding of the projective $t$-wall on $N_i$ and denote it again by $\mathcal{W}_t$. 
Without loss of generality, we will assume that $w(\mathcal{W}_t,N_i)$ is nondecreasing in $i$; otherwise we inductively redefine $w(\mathcal{W}_t,N_i)$ as $\max\{w(\mathcal{W}_t,N_j); j\le i\}$.
For all integers $k',i,k$ satisfying $0\le 2k'<i\le k$, let 
\[
w(k',i,k,t)=i(i-2k')\cdot(w(\mathcal{W}_t,N_i) + 2k).
\]
This function will be used as a ``potential function'' in the proof of Proposition~\ref{prop_wall}.

We will also use the following simple statement about the ``continuity'' of facewidth under the operation of removing all vertices of a face.

\begin{proposition}[{\cite[Propositions 5.5.7 and 5.5.8]{MT01_graphs}}]
\label{prop_face}
Let $\mathcal{E}$ be an embedding of a graph on a surface $S$ with $\mathrm{fw}(\mathcal{E})\ge 3$. Let $f$ be a face of $\mathcal{E}$ and let $\mathcal{E}'$ be the embedding obtained from $\mathcal{E}$ by removing all vertices incident to $f$. Then $\mathrm{fw}(\mathcal{E}')\ge \mathrm{fw}(\mathcal{E}) - 2$.
\end{proposition}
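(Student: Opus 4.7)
I would prove Proposition~\ref{prop_face} by starting with a noncontractible closed curve witnessing $\mathrm{fw}(\mathcal{E}')$ and carefully modifying it into a noncontractible closed curve for $\mathcal{E}$ with at most two additional vertex crossings. Let $k' = \mathrm{fw}(\mathcal{E}')$, and fix a noncontractible simple closed curve $\gamma$ in $S$ meeting $V(\mathcal{E}')$ transversally in exactly $k'$ points, put in general position with respect to all of $\mathcal{E}$. Regarded as a curve in $S$, $\gamma$ may additionally pass through some of the removed vertices, each of which lies on the boundary walk $W$ of $f$.

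The plan is to produce a closed curve $\gamma^*$ in $S$, homotopic to $\gamma$ (hence noncontractible), that meets $V(\mathcal{E})$ in at most $k' + 2$ points. Let $F$ denote the face of $\mathcal{E}'$ containing $f$: topologically $F$ is obtained from $f$ by opening up the removed vertices and edges, and it also contains all other faces of $\mathcal{E}$ that touch $\partial f$ at a removed vertex. Outside of $F$, the curve $\gamma$ already lies in faces of $\mathcal{E}$ and meets no removed vertex. For each maximal sub-arc $\sigma$ of $\gamma$ whose interior lies in $F$, the endpoints of $\sigma$ lie on $\partial F \subseteq \mathcal{E}'$, and I would homotope $\sigma$ rel endpoints, inside $\overline F$, so as to avoid every removed vertex. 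The assumption $\mathrm{fw}(\mathcal{E}) \ge 3$ enters here: it forces every face of $\mathcal{E}$ to be a $2$-cell and rules out short noncontractible components of $\gamma \cap F$; any simple closed component that does arise bounds a disk in $S$ and may be deleted without changing the homotopy class of $\gamma$.

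The core difficulty, and the source of the ``$+2$'', is controlling how many new vertex crossings of $\mathcal{E}$ the rerouting introduces. Locally, pushing $\sigma$ off a removed vertex $v$ is free, because every face of $\mathcal{E}$ touching $v$ is contained in $F$, so $\sigma$ can be slid around $v$ entirely within $F$. The subtle situation is when $\gamma$ enters or exits $F$ at a point on $\partial F$ that is adjacent to a removed vertex of $W$: then, when we push $\sigma^*$ away from that removed vertex, it may be forced across an edge of $\mathcal{E}$ right next to another vertex of $W$. I expect the hardest step to be a careful global case analysis, leveraging $\mathrm{fw}(\mathcal{E}) \ge 3$ and the cyclic structure of $W$, that bounds the \emph{cumulative} number of such forced crossings by two. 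A natural approach is to choose, once and for all, a ``base arc'' inside $f$ along which to route the modified pieces $\sigma^*$, and to argue that all but at most two of the forced intersections with $W$ can be pushed off into this base arc at no vertex cost.
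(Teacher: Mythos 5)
The paper does not prove Proposition~\ref{prop_face}; it is quoted directly from Mohar and Thomassen (Propositions 5.5.7 and 5.5.8 of their book), so there is no internal proof to compare against. Evaluating your attempt on its own, the overall strategy — take a noncontractible curve $\gamma$ witnessing $\mathrm{fw}(\mathcal{E}')$ and homotope its arcs inside the new face $F$ of $\mathcal{E}'$ to get a noncontractible curve meeting $\mathcal{E}$ in at most $\mathrm{fw}(\mathcal{E}')+2$ vertices — is the right one. But the central step is not carried out, and the way you set it up contains a conceptual error.

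You propose to homotope each arc $\sigma$ of $\gamma\cap F$, rel endpoints inside $\overline F$, ``so as to avoid every removed vertex.'' This is aiming at the wrong target. A curve witnessing facewidth must meet the embedded graph \emph{only} at vertices: it is not allowed to cross an edge at an interior point. If you push $\sigma$ off the removed vertices while staying inside $\overline F$, it will generically cross the removed edges (edges of $\mathcal{E}$ incident to removed vertices) at non-vertex points, which disqualifies the resulting curve as a witness for $\mathrm{fw}(\mathcal{E})$. The correct goal is the opposite one: re-route $\sigma$ so that it travels from face of $\mathcal{E}$ to face of $\mathcal{E}$ passing \emph{through} vertices (most naturally, through removed vertices on $\partial f$), and then control how many vertices it passes through. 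Your ``base arc inside $f$'' remark is actually the germ of the right idea: every face of $\mathcal{E}$ contained in $F$ is incident to a removed vertex, and every removed vertex lies on $\partial f$, so from any endpoint $u\in\partial F$ one can reach the interior of $f$ by passing through exactly one vertex of $\mathcal{E}$; consequently a single arc can be re-routed through $f$ at a cost of two vertex crossings. What remains — and what you only gesture at as ``a careful global case analysis'' — is the real content: when $\gamma\cap F$ has several arcs, a naive re-routing costs two vertices per arc. You need an argument that reduces to a single arc (or shows that at most one arc needs the detour through $f$), e.g., by choosing $\gamma$ to minimize $|\gamma\cap F|$ and performing innermost-disc surgeries using that $\overline F$ is a closed disc, while preserving noncontractibility and not increasing the count of vertices met outside $F$. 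That reduction is the heart of the proposition and is absent from your proposal; as written, the proof does not go through.
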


\subsection{Proof of Theorem~\ref{theorem_ramsey}}
\label{subsection_zed}

Let $t\ge 3$ and let $g$ be a sufficiently large integer, larger than $g(t)/2$ where $g(t)$ is the number from Conjecture~\ref{conjecture_folklore}. Let $G$ be a graph of genus $g$. If the Euler genus of $G$ is larger than $g(t)$, then $G$ has a $t$-Kuratowski minor by Conjecture~\ref{conjecture_folklore}. For the rest of the proof we thus assume that the Euler genus of $G$ is at most $k=g(t)$, and our goal is to find the projective $t$-wall as a minor in $G$. Since $2g>k$, this implies that $G$ has an embedding $\mathcal{E}$ on $N_k$.

The operation of \emph{gluing a pair of vertices} $u,v$ in a graph $G$ creates a graph with vertex set $V(G)\setminus\{u,v\} \cup \{w\}$, where $w\notin V(G)$, and edge set $E(G[V(G)\setminus\{u,v\}]) \cup \{\{w,x\}; \{u,x\}\in E(G)\}\} \cup \{\{w,x\}; \{v,x\}\in E(G)\}$. We emphasize that this gluing operation creates no loops or multiple edges. An inverse operation is called \emph{splitting a vertex}; in general, this is not unique for a given graph and a vertex.

We show the following proposition by induction on $i$.

\begin{proposition}
\label{prop_wall}
Let $i,k,t$ be positive integers with $t\ge 3$ and $i\le k$. Let $G$ be a graph that has an embedding $\mathcal{E}$ on $N_i$, let $F$ be a set of at most $k-i$ faces in $\mathcal{E}$, and let $Z$ be the set of all vertices of $\mathcal{E}$ incident to at least one face in $F$. Then at least one of the following holds:
\begin{enumerate}
\item[{\rm 1)}] $G-Z$ has a projective $t$-wall as a minor, or
\item[{\rm 2)}] there is an integer $k'$ satisfying $0\le 2k'<i$ such that $G$ can be obtained from a graph $H$ of genus at most $k'$ 
by at most $w(k',i,k,t)$ consecutive operations of gluing a pair of vertices (shortly \emph{gluings}).
\end{enumerate}
\end{proposition}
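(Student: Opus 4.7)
The plan is to prove Proposition~\ref{prop_wall} by induction on $i$. The strategy: either the embedding $\mathcal{E}$ has sufficient facewidth (after removing $Z$) to extract a projective $t$-wall minor directly via Theorem~\ref{veta_facewidth}, or a short noncontractible curve in $N_i$ lets us cut the surface, reducing either to a lower-Euler-genus instance handled by the induction, or directly to an orientable surface (yielding conclusion 2) at once).

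I would begin by iterating Proposition~\ref{prop_face} once per face in $F$ to obtain $\mathrm{fw}(\mathcal{E}_Z) \ge \mathrm{fw}(\mathcal{E}) - 2(k-i)$, where $\mathcal{E}_Z$ is the embedding of $G - Z$ in $N_i$. If $\mathrm{fw}(\mathcal{E}_Z) \ge w(\mathcal{W}_t, N_i)$, then Theorem~\ref{veta_facewidth} produces $\mathcal{W}_t$ as a surface minor of $\mathcal{E}_Z$, so $G-Z$ contains the projective $t$-wall as a minor and conclusion 1) holds. Otherwise there is a noncontractible simple closed curve $\gamma$ in $N_i$ meeting $\mathcal{E}$ in at most $m := w(\mathcal{W}_t, N_i) + 2(k-i) - 1$ vertices, and after a small perturbation we may assume $\gamma$ avoids $Z$ and all edge interiors. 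Cutting along $\gamma$ doubles each of these at most $m$ vertices (in the $1$-sided case the single new boundary component traverses each vertex twice) and caps the new boundary components with disks, yielding a new embedded graph $G'$ on a possibly disconnected surface $S'$, so that $G$ is recovered from $G'$ by at most $m$ vertex gluings.

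I would then split by the topological type of $\gamma$. If $\gamma$ is $1$-sided, $S'$ is connected of Euler genus $i-1$; if $\gamma$ is $2$-sided nonseparating, $S'$ is connected of Euler genus $i-2$; if $\gamma$ is $2$-sided separating, $S' = S'_1 \sqcup S'_2$ with Euler genera $i_1, i_2 \ge 1$ summing to $i$ and, since $N_i$ is nonorientable, at least one $S'_j$ is nonorientable. For each connected component of $S'$ that happens to be orientable, I set the corresponding part of $k'$ equal to its genus and take the embedded subgraph as that part of $H$ directly; for each nonorientable component I apply the induction hypothesis with face set augmented by the new capping face(s), verifying $|F'| \le k - i'$ from $|F| \le k - i$. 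If any recursive call returns conclusion 1), the resulting wall minor pulls back to a minor of $G - Z$ because the split $\gamma$-vertices sit in the augmented removed set. If every recursive call returns conclusion 2), the resulting graphs are combined via disjoint union (whose genus adds) and the gluings accumulate.

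The main obstacle is verifying that the potential function $w(k', i, k, t) = i(i-2k')(w(\mathcal{W}_t, N_i) + 2k)$ absorbs the gluings across the recursion. Using the monotonicity assumption $w(\mathcal{W}_t, N_{j}) \le w(\mathcal{W}_t, N_i)$ for $j \le i$, the key arithmetic identities are
\[
i(i-2k') - (i-1)(i-1-2k') = 2(i-k') - 1 \quad\text{and}\quad i(i-2k') - (i-2)(i-2-2k') = 4(i-k'-1),
\]
both at least $1$ under the constraint $2k' < i$, which yields $w(k',i,k,t) - w(k',i-j,k,t) \ge w(\mathcal{W}_t, N_i) + 2k \ge m$ for $j \in \{1,2\}$ and thus room to absorb the $m$ new gluings on top of those supplied by the induction hypothesis. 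In the separating case the required inequality reduces to $i_2(i_1 - 2k'_1) + i_1(i_2 - 2k'_2) \ge 1$, which holds because the nonorientable side $j$ satisfies $i_j - 2k'_j \ge 1$ (by the induction hypothesis) while $i_{3-j} \ge 1$.
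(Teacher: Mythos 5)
Your proof follows essentially the same route as the paper's: induction on the Euler genus $i$, a dichotomy on facewidth, and, when the facewidth is small, cutting along a short noncontractible curve $\gamma$, capping, and either directly obtaining an orientable piece or recursing on the nonorientable piece(s) with the face set augmented by the capping face(s). The three cases ($1$-sided, $2$-sided nonseparating, $2$-sided separating) and the arithmetic with the potential function $w(k',i,k,t)$ match the paper's proof; your verification that the coefficient differences $i(i-2k')-(i-j)(i-j-2k')$ are at least $1$ and the bound $i_2(i_1-2k'_1)+i_1(i_2-2k'_2)\ge 1$ in the separating case is exactly what the paper uses. One small remark: the claim that ``after a small perturbation we may assume $\gamma$ avoids $Z$'' is not justified (a shortest noncontractible curve may be forced through $Z$-vertices), but it is also unnecessary -- you give the correct reason immediately afterwards, namely that every split $\gamma$-vertex lies in the augmented removed set $Z'$, so $G'-Z'$ (and hence any wall minor of it) is a subgraph of $G-Z$; the paper makes the same observation without the perturbation claim. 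Your phrasing of the facewidth dichotomy in terms of $\mathrm{fw}(\mathcal{E}_Z)$ rather than $\mathrm{fw}(\mathcal{E})$ is logically equivalent to the paper's, since the contrapositive of Proposition~\ref{prop_face} applied $|F|$ times converts one into the other (with the usual caveat that each intermediate facewidth must stay at least $3$, which both you and the paper leave implicit).
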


\begin{proof}
The main idea of the proof is to cut the surface recursively along ``short'' noncontractible curves until we obtain an embedding of large facewidth on a nonorientable surface, or until all the pieces are orientable.

We distinguish two cases according to the facewidth of $\mathcal{E}$.

1) $\mathrm{fw}(\mathcal{E}) \ge w(\mathcal{W}_t,N_i)+2(k-i)$. By Proposition~\ref{prop_face}, the induced embedding $\mathcal{E}'$ of $G-Z$ in $\mathcal{E}$ has facewidth at least $w(\mathcal{W}_t,N_i)$. Thus, $\mathcal{W}_t$ is a surface minor of $\mathcal{E}'$ and so the projective $t$-wall is a minor of $G-Z$.

2) $\mathrm{fw}(\mathcal{E}) < w(\mathcal{W}_t,N_i)+2(k-i)$. In this case there is a noncontractible closed curve $\gamma$ on $S$ intersecting $\mathcal{E}$ in less than $w(\mathcal{W}_t,N_i)+2(k-i)$ points, all of which can be assumed to be vertices. Let $W$ be the set of the vertices in $\mathcal{E}\cap \gamma$. We have three cases according to the type of $\gamma$: a) $\gamma$ is 1-sided, b) $\gamma$ is 2-sided but nonseparating in $N_i$, c) $\gamma$ is 2-sided and separates $N_i$ into two components.

In each case, we cut $N_i$ along $\gamma$, obtaining a surface or a pair of surfaces with boundary, and fill the boundary cycles with discs. The resulting surfaces may be orientable or nonorientable. In case a) we obtain a surface $S$ of Euler genus $i-1$. In case b) we obtain a surface $S$ of Euler genus $i-2$. In case c) we obtain a pair of surfaces $S_1$ and $S_2$ with Euler genera $i_1$ and $i_2$, respectively, such that $i_1+i_2=i$ and $1\le i_1,i_2\le i-1$.

While cutting the surface $N_i$ along $\gamma$, we also obtain an embedding $\mathcal{E}'$ of a graph $G'$ on $S$ or a pair of embeddings $\mathcal{E}_1'$ and $\mathcal{E}_2'$ of $G'_1$ and $G'_2$ on $S_1$ and $S_2$, respectively, obtained from $\mathcal{E}$ by splitting each vertex in $W$ into two copies, each copy keeping adjacent edges only from one side of $\gamma$. We now consider each of the three cases separately.

In case a), the embedding $\mathcal{E}'$ has one new face $f$, containing the disc that was used to fill a boundary cycle while creating $S$. On the other hand, the faces of $\mathcal{E}$ whose interior intersects $\gamma$ are no longer faces of $\mathcal{E}'$, as they were cut and merged into $f$. Let $F'$ be the union of $\{f\}$ and the subset of faces in $F$ that are still faces of $\mathcal{E}'$. Clearly, we have $|F'|\le |F|+1 \le k-i+1 = k-\mathrm{eg}(S)$.

If $S$ is orientable, then $G'$ is a graph of genus at most $\mathrm{eg}(S)/2=(i-1)/2$ and $G$ can be obtained from $G'$ by less than $w(\mathcal{W}_t,N_i)+2(k-i)\le w((i-1)/2,i,k,t)$ gluings.

If $S$ is nonorientable, we apply induction to the embedding $\mathcal{E}'$ and the set of faces $F'$. Let $Z'$ be the set of vertices of $\mathcal{E}'$ incident with at least one face in $F'$. Observe that $Z'$ contains all vertices in $Z$ and also all  new vertices created by splitting the vertices in $W$. Hence, $G'-Z'$ is a subgraph of $G-Z$. Therefore, if case 1) of the proposition occurs, we obtain a projective $t$-wall as a minor in both $G'-Z'$ and $G-Z$. In case 2) we obtain $G'$ from a graph $H$ of genus $k'<(i-1)/2$ by at most $w(k',i-1,k,t)$ gluings. Since $G$ is obtained from $G'$ by less than $w(\mathcal{W}_t,N_i)+2(k-i)$ gluings, we can obtain $G$ from $H$ by less than $w(k',i,k,t)$ gluings.

In case b), $\mathcal{E}'$ has two new faces $f_1$ and $f_2$. Let $F'$ be the union of $\{f_1,f_2\}$ and the subset of faces in $F$ that are still faces of $\mathcal{E}'$. Clearly, we have $|F'|\le |F|+2 \le k-i+2 = k-\mathrm{eg}(S)$.

If $S$ is orientable, then $G'$ is a graph of genus at most $\mathrm{eg}(S)/2=(i-2)/2$ and $G$ can be obtained from $G'$ by less than $w(\mathcal{W}_t,N_i)+2(k-i)\le w((i-2)/2,i,k,t)$ gluings.

If $S$ is nonorientable, we apply induction to $\mathcal{E}'$ and $F'$ and proceed analogously as in case a). If case 2) of the proposition occurs, we obtain $G'$ from a graph $H$ of genus $k'<(i-2)/2$ by at most $w(k',i-2,k,t)$ gluings, and thus we can again obtain $G$ from $H$ by less than $w(k',i,k,t)$ gluings.

In case c), $\mathcal{E}'_1$ has a new face $f_1$ and $\mathcal{E}'_2$ has a new face $f_2$. For $l\in\{1,2\}$ we define $F'_l$ as the union of $\{f_l\}$ and the the subset of faces in $F$ that are still faces of $\mathcal{E}'_l$. Again, for each $l\in\{1,2\}$ we have $|F'_l|\le |F|+1 \le k-i+1 \le k-\mathrm{eg}(S_l)$.

Notice that at least one of the surfaces $S_1, S_2$ is nonorientable, since $N_i$ is their connected sum. 
Let $l\in\{1,2\}$. If $S_l$ is orientable, then $G'_l$ is a graph of genus at most $\mathrm{eg}(S_l)/2=i_l/2$. If $S_l$ is nonorientable, we apply induction to $\mathcal{E}'_l$ and $F'_l$. Let $Z'_l$ be the set of vertices of $\mathcal{E}'_l$ incident with at least one face in $F'_l$. Observe that $Z'_l$ contains all vertices in $Z\cap V(G'_l)$ and all new vertices in $G'_l$ created by splitting the vertices in $W$. Hence, $G'_l-Z'_l$ is a subgraph of $G-Z$. Therefore, if case 1) of the proposition occurs, we obtain a projective $t$-wall as a minor in both $G'_l-Z'_l$ and $G-Z$. In case 2) we obtain $G'_l$ from a graph $H_l$ of genus $k'_l<i_l/2$ by at most $w(k'_l,i_l,k,t)$ gluings. 

If we have not obtained the projective $t$-wall as a minor in $G-Z$, then for each $l\in\{1,2\}$, the graph $G'_l$ is obtained from a graph $H_l$ of genus $k'_l\le i_l/2$ by at most $w(k'_l,i_l,k,t)$ gluings (where $w(i_l/2,i_l,k,t)=0$), and $k'_1+k'_2 \le (i-1)/2$ since at least one of $S_1, S_2$ is nonorientable. Let $H$ be the disjoint union of $H_1$ and $H_2$. Then $H$ is a graph of genus at most $k'=k'_1+k'_2<i/2$, and $G$ can be obtained from $H$ by less than $w(k'_1,i_1,k,t)+w(k'_2,i_2,k,t)+w(\mathcal{W}_t,N_i)+2(k-i)$ gluings. By the monotonicity of $w(\mathcal{W}_t,N_i)$, we have 
\begin{align*}
&w(k'_1,i_1,k,t)+w(k'_2,i_2,k,t)+w(\mathcal{W}_t,N_i)+2(k-i) \\
&\le (i_1(i_1-2k'_1)+i_2(i_2-2k'_2)+1)\cdot (w(\mathcal{W}_t,N_i) + 2k)\\ 
&\le (i(i-2k') + 1 - i_1(i_2-2k'_2) - i_2(i_1-2k'_1)) \cdot (w(\mathcal{W}_t,N_i) + 2k)\\
&\le w(k',i,k,t).
\end{align*}
This finishes the proof of the proposition.
\end{proof}

We apply Proposition~\ref{prop_wall} with $i=k$ and $F=\emptyset=Z$. If case 1) occurs, then $G$ has the projective $t$-wall as a minor. If case 2) occurs, then there is an integer $k'$ satisfying $0\le 2k'<k$ such that $G$ can be obtained from a graph $H$ of genus at most $k'$ by at most $w(k',k,k,t)$ gluings. Since every gluing increases the genus of a graph by at most $1$, we conclude that the genus of $G$ is at most $k'+w(k',k,k,t) \le k^2\cdot(w(\mathcal{W}_t,N_k) + 2k)$. This will be a contradiction if $g>k^2\cdot(w(\mathcal{W}_t,N_k) + 2k)$. Therefore, in Theorem~\ref{theorem_ramsey} it is sufficient to take $h(t)=g^2(t)\cdot(w(\mathcal{W}_t,N_{g(t)}) + 2g(t))$ where $g(t)$ is the number from Conjecture~\ref{conjecture_folklore}.

\section{\texorpdfstring{Lower bounds on the $\mathbb{Z}_2$-genus and Euler $\mathbb{Z}_2$-genus}{Lower bounds on the Z2-genus and Euler Z2-genus}}

In this section we prove Theorem~\ref{theorem_z2}. By~\eqref{eq_genusy}, the lower bounds on the Euler $\mathbb{Z}_2$-genus 
of the $t$-Kuratowski graphs in Theorem~\ref{theorem_z2} imply the lower bounds on their $\mathbb{Z}_2$-genus; thus it will be sufficient to prove the lower bounds on their Euler $\mathbb{Z}_2$-genus.

The fact that the (Euler) $\mathbb{Z}_2$-genus of $K_{3,t}$ or the other $t$-Kuratowski graphs is unbounded when $t$ goes to infinity is not obvious at first sight. The traditional lower bound on the (Euler) genus of $K_{3,t}$ relies on Euler's formula and the notion of a face. However, there is no analogue of a ``face'' in an independently even drawing, and the rotations of vertices no longer ``matter''. We thus need different tools to compute the (Euler) $\mathbb{Z}_2$-genus.

\subsection{\texorpdfstring{$\mathbb{Z}_2$-homology of curves}{Z2-homology of curves}}

We refer to Hatcher's textbook~\cite{Hat02_AT} for an excellent general introduction to homology theory. Unfortunately, except for the very short summary by Colin de Verdi\`ere~\cite[p. 14--15]{Code17_computational}, we were unable to find a compact treatment of the  homology theory for curves on surfaces in the literature, thus we sketch here the main aspects that are most important for us.

We will use the $\mathbb{Z}_2$-homology of closed curves on surfaces. That is, for a given surface $S$, we are interested in its first homology group with coefficients in $\mathbb{Z}_2$, denoted by $H_1(S;\mathbb{Z}_2)$. It is well-known that for each $g\ge 0$, the first homology group $H_1(M_g;\mathbb{Z}_2)$ of $M_g$ is isomorphic to $\mathbb{Z}_2^{2g}$~\cite[Example 2A.2. and Corollary 3A.6.(b)]{Hat02_AT}. This fact was crucial in establishing the weak Hanani--Tutte theorem on $M_g$~\cite[Lemma 3]{CN00_thrackles}. Similarly, for each $g\ge 1$, the first homology group $H_1(N_g;\mathbb{Z}_2)$ of $N_g$ is isomorphic to $\mathbb{Z}_2^{g}$~\cite[Example 2.37 and Corollary 3A.6.(b)]{Hat02_AT}.

To every closed curve $\gamma$ in a surface $S$ one can assign its homology class $[\gamma] \in H_1(S;\mathbb{Z}_2)$, and this assignment is invariant under continuous deformation (homotopy). In particular, the homology class of each contractible curve is $0$. More generally, the homology class of each separating curve in $S$ is $0$ as well. Moreover, if $\gamma$ is obtained by a composition of $\gamma_1$ and $\gamma_2$, the homology classes satisfy $[\gamma]=[\gamma_1]+[\gamma_2]$. The assignment of homology classes to closed curves is naturally extended to formal integer combinations of the closed curves, called \emph{cycles}, and so $[\gamma]$ can be considered as a set of cycles. Since we are interested in homology with coefficients in $\mathbb{Z}_2$, it is sufficient to consider cycles with coefficients in $\mathbb{Z}_2$, which may also be regarded as finite sets of closed curves. 

If $\gamma_1$ and $\gamma_2$ are cycles in $S$ that cross in finitely many points and have no other points in common, we denote by $\mathrm{cr}(\gamma_1,\gamma_2)$ the number of their common crossings. We use the following well-known fact, which formalizes the intuition that by a continuous deformation of a closed curve, we can change its number of crossings with another closed curve only by an even number.

\begin{fact}
\label{fact_intersection}
Let $\gamma'_1 \in [\gamma_1]$ and $\gamma'_2 \in [\gamma_2]$ be a pair of cycles in a surface $S$ such that the intersection numbers $\mathrm{cr}(\gamma_1,\gamma_2)$ and $\mathrm{cr}(\gamma'_1,\gamma'_2)$ are defined and finite. Then 
\[
\mathrm{cr}(\gamma'_1,\gamma'_2) \equiv \mathrm{cr}(\gamma_1,\gamma_2) \ (\mathrm{mod }\ 2).
\]
\end{fact}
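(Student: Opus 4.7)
The plan is to deduce the statement from two ingredients: bilinearity of the mod-$2$ crossing count (where defined), and the following key lemma: if $\delta$ is a $\mathbb{Z}_2$-cycle with $[\delta]=0$ and $\eta$ is any $\mathbb{Z}_2$-cycle meeting $\delta$ transversely in finitely many points, then $\mathrm{cr}(\delta,\eta)$ is even.

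Granting the key lemma, I would first apply a small general-position perturbation (staying within the given homology classes) so that all pairs of cycles among $\gamma_1,\gamma'_1,\gamma_2,\gamma'_2$ intersect transversely in finitely many points and there are no triple intersections. Setting $\delta_i := \gamma_i + \gamma'_i$, each $\delta_i$ is a null-homologous $\mathbb{Z}_2$-cycle, and the no-triple-intersection condition makes the mod-$2$ crossing count additive in each argument. Hence
\[
\mathrm{cr}(\gamma'_1,\gamma'_2) + \mathrm{cr}(\gamma_1,\gamma_2) \equiv \mathrm{cr}(\delta_1,\gamma_2) + \mathrm{cr}(\gamma'_1,\delta_2) \pmod 2,
\]
and both terms on the right vanish modulo $2$ by the key lemma applied twice.

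For the key lemma I would use the indicator function of a filling $2$-chain. Fix a triangulation of $S$ fine enough that $\delta$ lies in its $1$-skeleton as a simplicial $1$-cycle. Since $[\delta]=0$, there is a $\mathbb{Z}_2$-valued $2$-chain $D$ with $\partial D = \delta$. After a further small perturbation of $\eta$, I may assume $\eta$ meets the $1$-skeleton transversely, only at interior points of edges, and avoids every vertex. Define $\chi_D : S\setminus(\text{$1$-skeleton}) \to \mathbb{Z}_2$ by $\chi_D(p)=1$ iff the open triangle containing $p$ appears in $D$. The relation $\partial D=\delta$ is precisely the statement that $\chi_D$ extends continuously across every edge not in $\delta$ and flips across every edge in $\delta$. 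Consequently, along each closed component of $\eta$ the pullback $\chi_D\circ\eta$ is locally constant off the crossings with $\delta$, where it flips; since each component is a loop, its number of flips is even, and summing over components gives $\mathrm{cr}(\delta,\eta)\equiv 0\pmod 2$.

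The main obstacle is the general-position setup: arranging simultaneously that all pairwise intersections are transverse and finite, that no three cycles meet at a common point, that $\eta$ meets the $1$-skeleton of the chosen triangulation cleanly (missing vertices), and that all perturbations preserve the relevant homology classes. Each ingredient is a standard simplicial or PL transversality argument, but combining them is the only delicate part — which is presumably why the authors prefer to quote this fact as a consequence of the algebraic intersection pairing on $H_1(S;\mathbb{Z}_2)$ rather than proving it by hand.
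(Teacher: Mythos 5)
The paper states Fact~\ref{fact_intersection} as well known (``may be seen as a consequence of the Jordan curve theorem'') and gives no proof of its own, so there is no paper argument to compare against; your proof is correct and is in fact precisely the Jordan-curve-theorem argument made explicit. The key lemma (a null-homologous $\mathbb{Z}_2$-cycle $\delta$ has even mod-$2$ intersection number with every closed cycle) via the indicator function of a filling $2$-chain is the surface analogue of the inside/outside parity argument in the plane, and the bilinear reduction using $\delta_i=\gamma_i+\gamma'_i$ is the standard way to pass from that special case to the general invariance statement. The general-position bookkeeping you flag is genuine but routine; the one point worth adding explicitly is that the small perturbations used to arrange transversality and avoid the $0$-skeleton preserve the two parities $\mathrm{cr}(\gamma_1,\gamma_2)$ and $\mathrm{cr}(\gamma'_1,\gamma'_2)$: when one cycle is moved slightly while the other is held fixed and their intersection is already transverse and finite, crossings can only appear or disappear in pairs, so the mod-$2$ count is stable under such perturbations.
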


Fact~\ref{fact_intersection} allows us to define a bilinear form
\[
\Omega_{S}: H_1(S;\mathbb{Z}_2) \times H_1(S;\mathbb{Z}_2) \rightarrow \mathbb{Z}_2
\]
such that 
\[
\Omega_{S} ([\gamma_1],[\gamma_2]) = \mathrm{cr}(\gamma_1,\gamma_2) \ \mathrm{mod} \ 2
\] 
whenever $\mathrm{cr}(\gamma_1,\gamma_2)$ is defined and is finite. The form $\Omega_{S}$ is called the \emph{intersection form} on $S$. See e.g. Hausmann~\cite[Section 5.3.3 and Corollary 5.4.13]{Hau14_mod2}. 
Clearly, $\Omega_{S}$ is symmetric, and for every $2$-sided simple closed curve $\gamma$ we have $\Omega_{S}([\gamma],[\gamma])=0$. This implies that for every cycle $\gamma$ in an orientable surface $M_g$ we have $\Omega_{M_g}([\gamma],[\gamma])=0$, since all simple closed curves in $M_g$ are $2$-sided, and every closed curve with finitely many self-intersections can be decomposed into finitely many simple closed curves. On the other hand, if $\gamma$ is a $1$-sided simple closed curve in $N_g$, then $\Omega_{N_g}([\gamma],[\gamma])=1$.
The following fact can be verified by choosing a ``standard'' basis of $H_1(S;\mathbb{Z}_2)$.

\begin{fact}
\label{fact_full_rank}
For every surface $S$, the intersection form $\Omega_{S}$ is nondegenerate. In particular, the rank of $\Omega_{M_g}$ is $2g$ and the rank of $\Omega_{N_g}$ is $g$. In each case, the rank of $\Omega_{S}$ is equal to the Euler genus of $S$.
\end{fact}

In our proofs we need only the trivial inequality that the rank of $\Omega_{S}$ is at most the rank of $H_1(S;\mathbb{Z}_2)$, which equals the Euler genus of $S$.

We have the following simple observation about intersections of disjoint cycles in independently even drawings.

\begin{observation}[{\cite[Lemma 1]{SS13_block}}]
\label{obs_disjoint_cycles}
Let $\mathcal{D}$ be an independently even drawing of a graph $G$ on a surface $S$. Let $C_1$ and $C_2$ be vertex-disjoint cycles in $G$, and let $\gamma_1$ and $\gamma_2$ be the closed curves representing $C_1$ and $C_2$, respectively, in $\mathcal{D}$. Then $\mathrm{cr}(\gamma_1,\gamma_2) \equiv 0 \ (\mathrm{mod }\ 2)$, which implies that $\Omega_{S}([\gamma_1],[\gamma_2])=0$. 
\qed
\end{observation}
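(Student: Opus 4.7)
The plan is to reduce the statement to a pairwise accounting of edge-edge crossings and then invoke the independently-even hypothesis edge by edge. Concretely, since $C_1$ and $C_2$ are vertex-disjoint cycles in $G$, the closed curves $\gamma_1$ and $\gamma_2$ are concatenations of the drawings of the edges of $C_1$ and of $C_2$ respectively, and they share no vertex of the drawing. Consequently every point of $\gamma_1 \cap \gamma_2$ is an interior crossing of an edge $e_1 \in E(C_1)$ with an edge $e_2 \in E(C_2)$, so
\[
\mathrm{cr}(\gamma_1,\gamma_2) \;=\; \sum_{e_1 \in E(C_1)} \sum_{e_2 \in E(C_2)} \mathrm{cr}(e_1,e_2).
\]

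Next I would use that vertex-disjointness of $C_1, C_2$ forces any $e_1 \in E(C_1)$ and $e_2 \in E(C_2)$ to share no endpoint, so $e_1$ and $e_2$ are independent in the sense of the introduction. Since $\mathcal{D}$ is an independently even drawing, $\mathrm{cr}(e_1,e_2)$ is even for every such pair. The double sum above is therefore a sum of even integers, hence even, proving $\mathrm{cr}(\gamma_1,\gamma_2) \equiv 0 \pmod 2$.

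The final clause is then an immediate application of the definition of the intersection form: by Fact~\ref{fact_intersection} the parity $\mathrm{cr}(\gamma_1,\gamma_2) \bmod 2$ depends only on the homology classes $[\gamma_1], [\gamma_2]$, and by construction $\Omega_S([\gamma_1],[\gamma_2])$ equals precisely this parity, which we just showed is $0$. There is no real obstacle here; the only mild point to verify is that $\gamma_1$ and $\gamma_2$ are in sufficiently general position for $\mathrm{cr}(\gamma_1,\gamma_2)$ to be defined, which follows directly from the genericity conditions imposed on drawings in Section~\ref{section_definitions} (no three edges cross at a common point, edges meet only at endpoints or transverse crossings, finitely many crossings per pair).
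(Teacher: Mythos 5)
Your proof is correct and is exactly the natural argument: decompose $\gamma_1 \cap \gamma_2$ into edge--edge crossings, note that vertex-disjointness of $C_1, C_2$ makes every such pair of edges independent, and sum the even parities. The paper states this as an observation and cites \cite[Lemma 1]{SS13_block} without reproducing a proof, so there is nothing to diverge from; your argument is the standard one and is complete.
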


\subsection{\texorpdfstring{Combinatorial representation of the $\mathbb{Z}_2$-homology of drawings}{Combinatorial representation of the Z2-homology of drawings}}

Schaefer and \v{S}tefankovi\v{c}~\cite{SS13_block} used the following combinatorial representation of drawings of graphs on $M_g$ and $N_g$. First, every drawing of a graph on $M_g$ can be considered as a drawing on the nonorientable surface $N_{2g+1}$, since $M_g$ minus a point is homeomorphic to an open subset of $N_{2g+1}$. The surface $N_{h}$ minus a point can be represented combinatorially as the plane with $h$ \emph{crosscaps}. A crosscap at a point $x$ is a combinatorial representation of a M\"obius strip whose boundary is identified with the boundary of a small circular hole centered in $x$. Informally, the main ``objective'' of a crosscap is to allow a set of curves intersect transversally at $x$ without counting it as a crossing.

Every closed curve $\gamma$ drawn in the plane with $h$ crosscaps is assigned a vector $y_{\gamma}\in\{0,1\}^{h}$ such that $(y_{\gamma})_i=1$ if and only if $\gamma$ passes an odd number of times through the $i$th crosscap. When $\gamma$ represents a $2$-sided curve in a surface $S$, then $y_{\gamma}$ has an even number of coordinates equal to $1$. The vectors $y_{\gamma}$ represent the elements of the homology group $H_1(S;\mathbb{Z}_2)$, and the value of the intersection form 
$\Omega_{S} ([\gamma],[\gamma'])$ is equal to the scalar product $y_{\gamma}^{\top}y_{\gamma'}$ over $\mathbb{Z}_2$. Analogously, we assign a vector $y^{\mathcal{D}}_e$ (or simply $y_e$) to every curve representing an edge $e$ in a drawing $\mathcal{D}$ of a graph in this model.

\begin{figure}
\begin{center}
\includegraphics{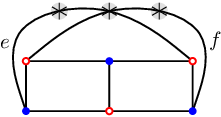}
\end{center}
\caption{An embedding of $K_{3,3}$ on the torus represented as a drawing $\mathcal{D}$ in the plane with three crosscaps. The nonzero vectors assigned to the edges are $y^{\mathcal{D}}_e=(1,1,0)$ and $y^{\mathcal{D}}_f=(0,1,1)$.}
\label{obr_k33_on_N3}
\end{figure}

We use the following two lemmata by Schaefer and \v{S}tefankovi\v{c}~\cite{SS13_block}. Here we consider \emph{crosscap drawings}, in which we allow self-intersections of edges and crossing of more than two edges in the points representing the crosscaps.

\begin{lemma}[{\cite[Lemma 5]{SS13_block}}]
\label{lemma_forest}
Let $G$ be a graph that has an independently even drawing $\mathcal{D}$ on a surface $S$ and let $F$ be a forest in $G$. Let $h=2g+1$ if $S=M_g$ and $h=g$ if $S=N_g$. Then $G$ has a crosscap drawing $\mathcal{E}$ in the plane with $h$ crosscaps, such that
\begin{enumerate}
\item[{\rm 1)}] every pair of independent edges has an even number of common crossings except those at the crosscaps, and
\item[{\rm 2)}] every edge $f$ of $F$ passes through each crosscap an even number of times; that is, $y^{\mathcal{E}}_f=0$.
\end{enumerate}
Moreover, the drawing in $S$ corresponding to $\mathcal{E}$ can be obtained from $\mathcal{D}$ by a sequence of continuous deformations of edges and neighborhoods of vertices, so the homology classes of all cycles are preserved between the two drawings.
\end{lemma}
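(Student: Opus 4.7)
Plan. The proof naturally splits into two phases. In \emph{Phase 1} I would transfer the drawing $\mathcal{D}$ from the surface $S$ into a drawing $\mathcal{E}_0$ in the plane with $h$ crosscaps. This uses the standard topological fact that $N_h$ with one point removed is homeomorphic to the plane with $h$ crosscaps, together with the relation $M_g \mathbin{\#} N_1 = N_{2g+1}$ which lets $M_g$ minus a point embed into the plane with $2g+1$ crosscaps. Choosing the basepoint in the complement of $\mathcal{D}$ makes the conversion a homeomorphism on the support of the drawing, so all crossing parities and all homology classes of cycles are preserved; thus condition~1 already holds in $\mathcal{E}_0$, but condition~2 need not.

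In \emph{Phase 2} I would normalize the forest edges one at a time. Root each tree of $F$ at an arbitrary vertex and process the edges of $F$ in root-to-leaf (BFS) order. For the current edge $f=uv$ with $v$ the child, and for each coordinate $i$ with $(y^{\mathcal{E}_0}_f)_i=1$, I perform an isotopy supported in a small tubular neighborhood of $f$ that drags the curve $f$ across crosscap $i$. This elementary move flips $(y_f)_i$ from $1$ to $0$; as a side effect, for every other edge $g$ that passes through crosscap $i$ an odd number of times, it introduces one new crossing between $f$ and $g$ outside the crosscaps. Crossings with edges adjacent to $f$ are harmless for condition~1, but crossings with edges independent from $f$ must be removed by subsequent finger-moves, which I would confine to the still-unprocessed child subtree of $f$ and to non-forest edges.

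The main obstacle I expect is verifying that these compensating finger-moves can be carried out consistently, without cascading indefinitely and without spoiling the already-zeroed $y_{f'}$ for previously processed $f'\in F$. The BFS ordering is essential: processed forest edges lie strictly above $f$ in the rooted tree, whereas all repair moves remain within the unprocessed subtree and among edges outside $F$, so they cannot touch the processed part. Closure of the repair cascade relies on the independently-even property of $\mathcal{E}_0$, which forces the $\mathbb{Z}_2$-parity sums controlling the cascade to vanish. Preservation of the homology class of every cycle holds throughout, since each elementary move is an isotopy of the plane-with-crosscaps on its support and any change in the crossing pattern of edges within a cycle cancels in pairs when summed over the cycle; algebraically, we are performing elementary $\mathbb{Z}_2$-row-operations on the vectors $(y_e)_{e\in E(G)}$ indexed by the linearly independent set $F$ modulo the cycle space of $G$, which is exactly why the forest hypothesis cannot be weakened.
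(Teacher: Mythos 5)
The paper cites this lemma from Schaefer--\v{S}tefankovi\v{c}~\cite[Lemma~5]{SS13_block} without reproducing a proof, so your proposal has to stand on its own, and it has a genuine gap. Phase~1 is fine. The trouble starts with your elementary move in Phase~2: re-routing the single edge $f$ so that it avoids crosscap~$i$ is \emph{not} an isotopy of the surface $S$, nor of the plane with crosscaps. Flipping the parity of the number of intersections of $f$ with the boundary circle $C_i$ of the $i$-th crosscap cannot be achieved by deforming $f$ alone, since an isotopy changes that intersection count only in pairs; correspondingly, every cycle of $G$ through $f$ has its $\mathbb{Z}_2$-homology class changed by your move (its intersection parity with $[C_i]$ flips). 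This directly contradicts the ``moreover'' clause you need to prove. That clause is worded carefully --- ``deformations of edges and \emph{neighborhoods of vertices}'' --- because the intended elementary move drags a small disc around a vertex $v$ through a crosscap, carrying the initial segments of \emph{all} edges at $v$ together; that move is an isotopy of $S$, preserves every homology class, and toggles $(y_e)_i$ simultaneously for every $e$ incident to $v$.

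The second problem is the repair step. Finger moves are isotopies of the plane with crosscaps; they create and annihilate crossings in pairs and therefore cannot change the parity of the number of crossings between two fixed edges. Yet after your edge-reroute, a pair of independent edges $f,g$ with $(y_g)_i=1$ has had its crossing parity outside the crosscaps flipped, and no sequence of finger moves will unflip it. A genuine repair would require further crosscap re-routings of the $g$'s, and you give no argument that this cascade terminates, stays confined to the unprocessed subtree, or leaves the already-cleaned forest edges intact; the assertion that ``the $\mathbb{Z}_2$-parity sums controlling the cascade vanish'' is precisely the content the proof must supply, and nothing in the independently-even hypothesis makes this obvious for a one-edge-at-a-time scheme. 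The vertex-move version avoids the cascade entirely: root each tree of $F$, let $y(v)$ be the sum of the $y$-vectors along the root-to-$v$ path, and drag each vertex $v$ through the crosscaps indicated by the support of $y(v)$. Then $y_e$ is replaced by $y_e+y(v)+y(w)$ for each edge $e=vw$, which zeroes every forest edge in a single pass, and the corrections to the crossing parities of vertex-disjoint pairs of edges are arranged to cancel in pairs rather than being patched up afterwards.
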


\begin{lemma}[{\cite[Lemma 4]{SS13_block}}]
\label{lemma_smalleulergenus}
Let $G$ be a graph that has a crosscap drawing $\mathcal{D}$ in the plane with finitely many crosscaps with every pair of independent edges having an even number of common crossings except those at the crosscaps. Let $d$ be the dimension of the vector space generated by the set $\{y^{\mathcal{D}}_e$; $e\in E(G)\}$. Then $G$ has an independently even drawing on a surface of Euler genus $d$.
\end{lemma}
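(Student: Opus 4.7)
The plan is to combine linear algebra over $\mathbb{Z}_2$ with geometric manipulations of the crosscap configuration so as to reduce the number of crosscaps from $h$ down to $d$, while maintaining the property that every pair of independent edges crosses evenly outside the crosscaps. Let $Y$ be the $|E(G)|\times h$ matrix whose rows are the vectors $y^{\mathcal{D}}_e$; by hypothesis $\mathrm{rank}(Y)=d$, so exactly $h-d$ of its columns are redundant in the sense that they lie in the $\mathbb{Z}_2$-span of the others. The goal is to realize a sequence of elementary column operations on $Y$ via geometric moves on the drawing, ending with $h-d$ all-zero columns that can then be excised.

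I would use two local operations. First, a \emph{crosscap elimination}: if a crosscap $c$ satisfies $(y^{\mathcal{D}}_e)_c=0$ for every edge $e$ (i.e.\ the column of $Y$ indexed by $c$ is zero), then for each edge we pair up its passings through $c$ and redraw each pair inside a small disc around $c$ to bypass the crosscap, then seal $c$ off. Any other edge crosses this small disc a fixed number of times, so the rerouting of each pair of passings contributes an even number of new crossings with it, and independent pairs remain evenly crossing. Second, a \emph{crosscap addition} realizing the column operation $C_a\mapsto C_a+C_b$: choose a simple arc $\alpha$ from $b$ to $a$ avoiding all vertices and all other crosscaps, and for each passing of an edge $e$ through crosscap $b$ insert, just before the passing, a short detour that runs alongside $\alpha$ to crosscap $a$, passes once through $a$, and returns along a parallel copy of $\alpha$. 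The passing through $b$ itself is preserved, so $(y^{\mathcal{D}}_e)_b$ is unchanged, whereas $(y^{\mathcal{D}}_e)_a$ acquires exactly one extra $1$ per passing of $e$ through $b$; summed over all such passings the net effect is $(y^{\mathcal{D}}_e)_a\mapsto (y^{\mathcal{D}}_e)_a+(y^{\mathcal{D}}_e)_b$. Each detour traverses a small neighborhood of $\alpha$ twice, once in each direction, so any independent edge $e'$ picks up an even number of new crossings with $e$, and the independent-even condition is preserved. Transpositions of two crosscaps are merely relabelings.

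With these two operations in hand, the proof concludes by Gaussian elimination on the columns of $Y$: transpositions and additions suffice to bring $Y$ to a form with $h-d$ zero columns, each of which is then removed by the first operation. The resulting drawing lives in the plane with $d$ crosscaps, that is, on $N_d$, which is a surface of Euler genus $d$; it is independently even because passings through crosscaps are not counted as crossings while all remaining crossings outside crosscaps have been kept even between independent pairs. The main obstacle I expect to grind through carefully is the crossing-parity bookkeeping for the crosscap-addition operation: one must verify that the detours along $\alpha$ (i)~contribute only paired crossings with every other edge, (ii)~do not accidentally alter $(y^{\mathcal{D}}_e)_c$ for any third crosscap $c$, which is ensured by taking $\alpha$ disjoint from the other crosscaps, and (iii)~the two parallel strands of the detour, together with the small loops near $a$ and near $b$, do not introduce an odd number of crossings with any independent edge. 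Once this local calculation is verified, the global inductive reduction goes through essentially automatically.
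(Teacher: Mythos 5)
A preliminary remark: the paper does not prove this lemma itself but quotes it from Schaefer and \v{S}tefankovi\v{c} \cite[Lemma 4]{SS13_block}, so I am comparing your argument with the known proof strategy rather than with a proof contained in this paper. Your crosscap-elimination move is correct and standard: replacing a consecutive pair of passages of an edge through a crosscap by two half-turns just outside the crosscap boundary changes the number of crossings with any other edge $f$ by twice the number of passages of $f$ through that crosscap, so parities survive. The genuine gap is in the crosscap-addition move. The detour you splice into $e$ at a passage through $b$ is a closed loop $\delta$ passing exactly once through crosscap $a$ and through no other crosscap, so $\delta$ is freely homotopic, through a thin strip along $\alpha$ containing no vertices, to a small one-sided loop $\lambda_a$ through crosscap $a$. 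Hence for every edge $f$ we have $\mathrm{cr}(\delta,f)\equiv\mathrm{cr}(\lambda_a,f)\equiv (y_f)_a \pmod 2$. Concretely: after $\delta$ enters crosscap $a$ at a boundary point $q$ it re-emerges at the antipodal point $-q$, and to rejoin the parallel return copy of $\alpha$ (which starts near $q$) it must travel half-way around the crosscap; this half-turn crosses exactly one of the two antipodal boundary points of each passage of $f$ through $a$. This is precisely the crossing your item (iii) overlooks. Summing over all detours (and noting that detour-versus-detour crossings do cancel), your move changes the number of crossings of $e$ and $f$ outside the crosscaps by $(y_e)_b(y_f)_a+(y_e)_a(y_f)_b \pmod 2$, which is odd in general, so independent evenness is destroyed and the Gaussian-elimination scheme collapses. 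This is not repairable by cleverer routing: any loop realizing the column operation $C_a\mapsto C_a+C_b$ must lie in the homology class of the one-sided loop $\lambda_a$, and $\Omega_S([\lambda_a],\cdot)$ is nonzero on every class that uses crosscap $a$ oddly.

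The fix, which is essentially how \cite{SS13_block} argue, is that one may not perform arbitrary column operations, only those furnished by the orthogonal complement of $W=\mathrm{span}\{y^{\mathcal{D}}_e;\, e\in E(G)\}$. If the number $h$ of crosscaps exceeds $d$, pick a nonzero $w\in W^{\perp}$ with, say, $w_h=1$, and replace every passage of every edge through crosscap $h$ by a detour passing once through each crosscap $i\neq h$ with $w_i=1$. The difference between the old and new routes is then a loop with homology vector $w$, which crosses every edge $f$ with parity $w^{\top}y_f=0$; thus independent evenness outside the crosscaps is preserved, the vectors transform by $y_e\mapsto y_e+(y_e)_h\,w$ (so their span still has dimension at most $d$ and crosscap $h$ becomes empty), crosscap $h$ can be capped off, and induction on $h$ completes the proof. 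In short: your overall architecture (geometric realization of linear algebra on the crosscap-incidence matrix, followed by elimination of empty crosscaps) is the right one, but the admissible operations are exactly those indexed by $W^{\perp}$, not all elementary column operations.
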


Lemma~\ref{lemma_forest} and Lemma~\ref{lemma_smalleulergenus} imply the following corollary generalizing the strong Hanani--Tutte theorem.

\begin{corollary}
\label{cor_zero}
Let $G$ be a connected graph with an independently even drawing on a surface $S$ such that each cycle in the drawing is homologically zero (that is, the homology class of the corresponding closed curve is $0$). Then $G$ is planar.
\end{corollary}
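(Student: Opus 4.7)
The plan is to derive the corollary by chaining Lemma \ref{lemma_forest}, Lemma \ref{lemma_smalleulergenus}, and the strong Hanani--Tutte theorem (Theorem \ref{theorem_strong}), using a spanning tree of $G$ to force all edge vectors to be zero.

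First, since $G$ is connected, fix a spanning tree $T$ of $G$ and apply Lemma \ref{lemma_forest} with $F=T$. This produces a drawing $\mathcal{E}$ of $G$ in the plane with $h$ crosscaps in which every pair of independent edges crosses evenly outside the crosscaps and every tree edge $f \in T$ satisfies $y^{\mathcal{E}}_f = 0$. Crucially, the lemma guarantees that the homology classes of all cycles are preserved between $\mathcal{D}$ and $\mathcal{E}$, so each cycle of $G$ is still homologically trivial in $\mathcal{E}$.

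Next, I would exploit the fact that the vector $y^{\mathcal{E}}_{\gamma_C}$ attached to the closed curve representing a cycle $C$ of $G$ equals $\sum_{e\in E(C)} y^{\mathcal{E}}_e$ over $\mathbb{Z}_2$, because each crosscap is traversed an odd number of times by $\gamma_C$ exactly when an odd number of edges of $C$ traverse it oddly. For any non-tree edge $e$, let $C_e$ be the fundamental cycle consisting of $e$ together with the unique $T$-path between its endpoints. Because $[\gamma_{C_e}] = 0$, the corresponding vector vanishes, and since all tree-edge vectors are zero, we obtain $y^{\mathcal{E}}_e = 0$. Hence every edge of $G$ has the zero vector in $\mathcal{E}$, so the dimension $d$ of the space spanned by $\{y^{\mathcal{E}}_e : e \in E(G)\}$ equals $0$.

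Applying Lemma \ref{lemma_smalleulergenus} then yields an independently even drawing of $G$ on a surface of Euler genus $0$, i.e., the sphere, and Theorem \ref{theorem_strong} concludes that $G$ is planar. There is no real obstacle here beyond bookkeeping; the only point worth verifying carefully is the identification of the vector $y^{\mathcal{E}}_{\gamma_C}$ associated with a cycle's closed curve with the $\mathbb{Z}_2$-sum of the vectors of its edges, and the assertion that the hypothesis ``each cycle is homologically zero'' transfers from $\mathcal{D}$ to $\mathcal{E}$, both of which follow directly from the definitions and from the homotopy-preservation clause of Lemma \ref{lemma_forest}.
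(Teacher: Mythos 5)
Your proof is correct and follows essentially the same route as the paper's: take a spanning tree, invoke Lemma~\ref{lemma_forest} to zero out the tree edges, use the fundamental-cycle decomposition and the homologically-trivial hypothesis to deduce $y^{\mathcal{E}}_e = 0$ for every edge, then chain Lemma~\ref{lemma_smalleulergenus} with the strong Hanani--Tutte theorem. The only difference is that you spell out more explicitly why the vector of a cycle is the $\mathbb{Z}_2$-sum of the vectors of its edges, which the paper leaves implicit.
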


\begin{proof}
Let $F$ be a spanning tree of $G$ and let $\mathcal{E}$ be a crosscap drawing obtained from Lemma~\ref{lemma_forest}. The cycle space of $G$ is generated by the fundamental cycles with respect to $F$. Every edge $e\in E(G)\setminus E(F)$ determines a unique fundamental cycle $C_e\subseteq F\cup \{e\}$. Since $y^{\mathcal{E}}_f=0$ for every edge $f$ of $F$, the homology class of $C_e$ in $\mathcal{E}$ is represented by $y^{\mathcal{E}}_e$. Therefore, under the assumption that the homology classes of all cycles are zero, we have $y^{\mathcal{E}}_e=0$ for every edge $e$ of $G$. Lemma~\ref{lemma_smalleulergenus} then implies that $G$ has an independently even drawing in the plane. Finally, $G$ is planar by the strong Hanani--Tutte theorem (Theorem~\ref{theorem_strong}).
\end{proof}

Corollary~\ref{cor_zero} can be further strengthened using Lemma~\ref{lemma_forest} as follows.

\begin{lemma}
\label{lemma_iocr_cycles}
Let $G$ be a connected graph with an independently even drawing $\mathcal{D}$ on a surface $S$. Let $F$ be a spanning tree of $G$. If $G$ is nonplanar, then there are independent edges $e,f \in E(G)\setminus E(F)$ such that the closed curves $\gamma_e$ and $\gamma_f$ representing the fundamental cycles of $e$ and $f$, respectively, satisfy $\Omega_{S}([\gamma_e],[\gamma_f])=1$.
\end{lemma}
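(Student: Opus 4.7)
The plan is to prove the contrapositive: assuming that every pair of independent non-tree edges $e,f$ satisfies $\Omega_S([\gamma_e],[\gamma_f])=0$, I will show that $G$ is planar.

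First, I would apply Lemma~\ref{lemma_forest} with $\mathcal{D}$ and $F$, obtaining a drawing $\mathcal{E}$ of $G$ in the plane with finitely many crosscaps in which every tree edge $f\in E(F)$ satisfies $y^{\mathcal{E}}_f=0$, independent edges cross evenly outside the crosscaps, and homology classes of cycles are preserved between $\mathcal{D}$ and $\mathcal{E}$. Because the tree portion of each fundamental cycle contributes zero, for every non-tree edge $e$ the vector $y^{\mathcal{E}}_e$ represents the homology class $[\gamma_e]$, and hence $\Omega_S([\gamma_e],[\gamma_f])=(y^{\mathcal{E}}_e)^\top y^{\mathcal{E}}_f$. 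The standing hypothesis then becomes the purely combinatorial dot-product condition $(y^{\mathcal{E}}_e)^\top y^{\mathcal{E}}_f=0$ for all pairs of independent non-tree edges. By Corollary~\ref{cor_zero} (equivalently, by Lemma~\ref{lemma_smalleulergenus} with $d=0$ together with the strong Hanani--Tutte theorem, Theorem~\ref{theorem_strong}), it suffices to show that $y^{\mathcal{E}}_e=0$ for every non-tree edge $e$.

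My plan to achieve this vanishing is an iterated spanning-tree exchange. Suppose some non-tree edge $e_0$ has $y^{\mathcal{E}}_{e_0}\ne 0$. Using the non-degeneracy of $\Omega_S$ on $H_1(S;\mathbb{Z}_2)$ together with the fact that non-tree fundamental cycle classes span the image of the cycle space in $H_1(S;\mathbb{Z}_2)$, either there is a non-tree edge $f$ with $(y^{\mathcal{E}}_{e_0})^\top y^{\mathcal{E}}_f=1$, in which case the hypothesis forces $f$ to be adjacent to $e_0$; or $[\gamma_{e_0}]$ lies in the radical of $\Omega_S$ restricted to that image, in which case Lemma~\ref{lemma_smalleulergenus} lets me descend to an independently even drawing on a surface of strictly smaller Euler genus (with the dot-product condition intact, since that reduction amounts to discarding unused crosscaps and preserves the standard inner product) and iterate. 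In the first case I would swap $f$ into the tree, replacing an edge $e'\in F\cap C_f$, and re-apply Lemma~\ref{lemma_forest} with the new tree $F'=F-e'+f$. By the cycle-space identity $C'_g=C_g$ or $C'_g=C_g\Delta C_f$ (according to whether $e'$ belongs to the $F$-tree path of $C_g$), each new non-tree $y$-vector is either unchanged or equal to $y^{\mathcal{E}}_g+y^{\mathcal{E}}_f$. Choosing $e'$ so that a suitable monovariant (such as $\dim\mathrm{span}\{y^{\mathcal{E}}_g\}$ or its combination with the total Hamming weight) strictly decreases, finitely many exchanges drive every non-tree $y$-vector to zero.

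The main obstacle, and the step I expect to be the crux, is showing that the pairwise-orthogonality hypothesis on independent pairs is preserved by each tree exchange. After the swap, a pair of independent non-tree edges $g,g'$ may have new dot product $(y^{\mathcal{E}}_g+\varepsilon_g y^{\mathcal{E}}_f)^\top (y^{\mathcal{E}}_{g'}+\varepsilon_{g'} y^{\mathcal{E}}_f)$, introducing cross terms $y^{\mathcal{E}}_g\cdot y^{\mathcal{E}}_f$, $y^{\mathcal{E}}_{g'}\cdot y^{\mathcal{E}}_f$, and $y^{\mathcal{E}}_f\cdot y^{\mathcal{E}}_f$ that must all vanish for the induction to go through. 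Controlling these cross terms requires a careful graph-theoretic argument exploiting the adjacency of $e_0$ and $f$ together with the local structure of $C_f$ (which passes through both endpoints of $f$), to rule out the possibility that a new problematic independent pair is created. Executing this bookkeeping — essentially, guaranteeing that the exchange operation respects the combinatorics of the line graph of non-tree edges — is where the bulk of the technical work in the proof lies.
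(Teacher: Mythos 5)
There is a genuine gap, and the approach is more convoluted than necessary. You set up the contrapositive correctly and correctly reduce (via Lemma~\ref{lemma_forest}) to a combinatorial statement about the vectors $y^{\mathcal{E}}_e$, but you then try to show that the orthogonality hypothesis forces $y^{\mathcal{E}}_e=0$ for \emph{every} non-tree edge. That is not implied by the hypothesis: orthogonality of all \emph{independent} non-tree pairs says nothing about adjacent pairs, and even full pairwise orthogonality would not force the vectors to vanish (they could all equal a single self-orthogonal nonzero vector, which occurs already on the torus). The tree-exchange machinery you introduce to bridge this gap is where the argument breaks: you yourself flag that preserving the orthogonality hypothesis under each exchange is ``the crux,'' but the cross terms $y_g^\top y_f$, $y_{g'}^\top y_f$, $y_f^\top y_f$ need not vanish and there is no monovariant offered that is actually shown to decrease. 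The ``descend to smaller genus via Lemma~\ref{lemma_smalleulergenus}'' branch is also unjustified: that lemma returns an independently even drawing on a smaller surface but does not assert that the crosscap-vector dot products are inherited, so re-applying Lemma~\ref{lemma_forest} could produce a fresh set of vectors with no relation to the old ones.

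The correct and much shorter route, which is what the paper does, avoids the contrapositive entirely. After applying Lemma~\ref{lemma_forest}, view $\mathcal{E}$ as a drawing in the plane (forgetting that certain points are crosscaps). Since $G$ is nonplanar, the strong Hanani--Tutte theorem directly produces a pair of independent edges $e,f$ crossing oddly in $\mathcal{E}$. Condition~1) of Lemma~\ref{lemma_forest} says these odd crossings must occur inside the crosscaps, i.e.\ $y_e^\top y_f=1$; condition~2) says any tree edge has zero $y$-vector, so neither $e$ nor $f$ is in $F$. Because tree edges contribute zero, $y_e$ and $y_f$ represent $[\gamma_e]$ and $[\gamma_f]$, and the scalar product computes $\Omega_S$. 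You had all these ingredients in hand after your second paragraph; the extra machinery was not only unproven but unnecessary.
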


\begin{proof}
Let $\mathcal{E}$ be a crosscap drawing of $G$ from Lemma~\ref{lemma_forest}. By the strong Hanani--Tutte theorem, there are two independent edges $e$ and $f$ in $G$ that cross an odd number of times in $\mathcal{E}$. Moreover, conditions 1) and 2) of Lemma~\ref{lemma_forest} imply that none of the edges $e$ and $f$ is in $F$ and so $e$ and $f$ cross an odd number of times in the crosscaps. This means that $y_e^{\top}y_f=1$, which is equivalent to $\Omega_{S}([\gamma_e],[\gamma_f])=1$.
\end{proof}

\subsection{Proof of Theorem~\ref{theorem_z2}a)}

We will show three lower bounds on $\mathrm{g}_0(K_{3,t})$ and $\mathrm{eg}_0(K_{3,t})$, in the order of increasing strength and complexity of their proof.
The reader interested in the strongest lower bounds can skip Proposition~\ref{prop_ramsey} and Proposition~\ref{prop_K3t_log}.

We will adopt the following notation for the vertices of $K_{3,t}$. The vertices of degree $t$ forming one part of the bipartition are denoted by $a,b,c$, and the remaining vertices by $u_0, u_1, \dots, u_{t-1}$. Let $U=\{u_0, u_1, \dots, u_{t-1}\}$. For each $i\in[t-1]$, let $C_i$ be the cycle $au_ibu_0$ and $C'_i$ the cycle $au_icu_0$.

The first lower bound follows from Ramsey's theorem and the weak Hanani--Tutte theorem on surfaces.

\begin{proposition}
\label{prop_ramsey}
We have $2\mathrm{g}_0(K_{3,t}) \ge \mathrm{eg}_0(K_{3,t}) \ge \Omega(\log\log\log t)$.
\end{proposition}

\begin{proof}
Let $t\ge 3, g\ge 0$ and let $\mathcal{D}$ be an independently even drawing of $K_{3,t}$ on a surface $S$ of Euler genus $g$. Construct an auxiliary graph $G_U$ with vertex set $U$ such that $u_iu_j$ is an edge of $G_U$ if and only if the edges $au_i$ and $au_j$ of $K_{3,t}$ cross an odd number times in $\mathcal{D}$. By Ramsey's theorem (see e.g. Diestel~\cite[Section 9.1]{Di16_fifth} or Matou\v{s}ek--Ne\v{s}et\v{r}il~\cite[Theorem 11.2.1]{MN09_invitation}) applied to $G_U$, there is a subset $U_a\subseteq U$ of size $\Omega(\log t)$ such that all the edges between $a$ and $U_a$ cross each other an odd number of times, or all the edges between $a$ and $U_a$ cross each other an even number of times.
Repeating the same argument with vertices $b$ and $c$, we find a subset $U_b\subseteq U_a$ of size $\Omega(\log \log t)$, and a subset $U_c\subseteq U_b$ of size $\Omega(\log \log \log t)$ such that the number of crossings of each pair of edges between $b$ and $U_b$ has the same parity, and the number of crossings of each pair of edges between $c$ and $U_c$ has the same parity. If the parity is odd for some of the vertices $a,b,c$, we modify the drawing locally around this vertex by introducing one more crossing for each pair of incident edges; see Figure~\ref{obr_vertex_flip} or~\cite[Fig. 4]{CN00_thrackles}. Finally, as each vertex $u$ of $U_c$ has degree $3$, we modify the drawing locally around $u$ so that again, every pair of the three edges incident to $u$ crosses an even number of times; see Figure~\ref{obr_deg3}. After these modifications we obtain an even drawing of the complete bipartite graph induced by $\{a,b,c\} \cup U_c$. By the weak Hanani--Tutte theorem for surfaces (Theorem~\ref{theorem_weaksurface}), the graph $K_{3,|U_c|}$ has an embedding on $S$ and so $g\ge \lfloor(|U_c|-2)/2\rfloor$. It follows that $g \ge \Omega(\log\log\log t)$.
\end{proof}

\begin{figure}
\begin{center}
\includegraphics{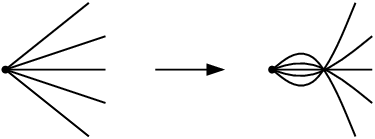}
\end{center}
\caption{Flipping the neighborhood of a vertex changes the parity of the number of crossings between any pair of incident edges.}
\label{obr_vertex_flip}
\end{figure}

The second lower bound is based on the pigeonhole principle and Corollary~\ref{cor_zero} from the previous subsection.

\begin{proposition}
\label{prop_K3t_log}
We have $2\mathrm{g}_0(K_{3,t}) \ge \mathrm{eg}_0(K_{3,t}) \ge \Omega(\log t)$.
\end{proposition}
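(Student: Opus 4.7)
The plan is to combine $\mathbb{Z}_2$-homology, the pigeonhole principle, and Corollary~\ref{cor_zero}. Fix an independently even drawing $\mathcal{D}$ of $K_{3,t}$ on a surface $S$ of Euler genus $h=\mathrm{eg}_0(K_{3,t})$, and for each $i\in\{1,\dots,t-1\}$ let $\gamma_i$ and $\gamma_i'$ denote the closed curves in $\mathcal{D}$ representing the $4$-cycles $C_i=au_ibu_0$ and $C_i'=au_icu_0$. Each homology class $[\gamma_i],[\gamma_i']$ lies in $H_1(S;\mathbb{Z}_2)$, a group of cardinality $2^h$: indeed $H_1(M_g;\mathbb{Z}_2)\cong\mathbb{Z}_2^{2g}$ and $H_1(N_g;\mathbb{Z}_2)\cong\mathbb{Z}_2^{g}$, and in both cases the exponent equals the Euler genus of the surface. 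Hence there are at most $2^{2h}$ possible pairs $([\gamma_i],[\gamma_i'])$, and by pigeonhole some subset $I\subseteq\{1,\dots,t-1\}$ of size at least $(t-1)/2^{2h}$ realizes a single pair; I then fix any $i_0\in I$.

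Next, I would restrict $\mathcal{D}$ to the induced subgraph $G'$ on $\{a,b,c\}\cup\{u_i:i\in I\}$, which is a copy of $K_{3,|I|}$ and whose induced drawing is still independently even. The key step is to choose the spanning tree $F$ of $G'$ \emph{rooted at $u_{i_0}$}: take the star of all edges $\{a,u_i\}$ for $i\in I$ together with the two edges $\{u_{i_0},b\}$ and $\{u_{i_0},c\}$. The fundamental cycle with respect to a non-tree edge $\{u_i,b\}$ (with $i\ne i_0$) is then the $4$-cycle $au_ibu_{i_0}$, whose edge set is the symmetric difference of those of $C_i$ and $C_{i_0}$; consequently its homology class equals $[\gamma_i]+[\gamma_{i_0}]=0$. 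Analogously every fundamental cycle arising from an edge $\{u_i,c\}$ has class $[\gamma_i']+[\gamma_{i_0}']=0$. Because the cycle space of $G'$ is generated by these fundamental cycles and the assignment of homology classes is $\mathbb{Z}_2$-linear, every cycle of $G'$ is nullhomologous in the restricted drawing.

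Corollary~\ref{cor_zero} now forces $G'=K_{3,|I|}$ to be planar, so $|I|\le 2$. Combined with $|I|\ge(t-1)/2^{2h}$ this yields $2^{2h+1}\ge t-1$, i.e.\ $\mathrm{eg}_0(K_{3,t})\ge(\log_2(t-1)-1)/2=\Omega(\log t)$; the lower bound on $\mathrm{g}_0(K_{3,t})$ follows from~\eqref{eq_genusy}. The main conceptual obstacle is that a one-shot pigeonhole on a single family of cycles only produces pairs of \emph{equal} homology classes rather than classes equal to \emph{zero}. The resolution is to pigeonhole on joint pairs from the two families $\{C_i\}$ and $\{C_i'\}$ and to re-root the spanning tree at a ``pivot'' $u_{i_0}\in I$, so that the fundamental cycles of $G'$ become differences of original cycles and are therefore nullhomologous.
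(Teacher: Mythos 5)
Your proof is correct and takes essentially the same approach as the paper: pigeonhole on the homology classes of the cycles $C_i$ and $C'_i$ to find a large subset $I$ with constant classes, then show the induced $K_{3,|I|}$ has all cycles nullhomologous and apply Corollary~\ref{cor_zero}. The paper pigeonholes in two stages (first on $[\gamma_i]$, then on $[\gamma'_i]$) and passes to a $K_{3,3}$ on three indices with an explicit generating set, whereas you pigeonhole once on the pairs and re-root a spanning tree at a pivot $u_{i_0}$; these are cosmetic variants yielding the same $\Omega(\log t)$ bound.
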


\begin{proof}
Let $\mathcal{D}$ be an independently even drawing of $K_{3,t}$ on a surface $S$ of Euler genus $g$. By the pigeonhole principle, there is a subset $I_b\subseteq [t-1]$ of size at least $(t-1)/2^{g}$ such that all the cycles $C_i$ with $i\in I_b$ have the same homology class in $\mathcal{D}$. Analogously, there is a subset $I_c\subseteq I_b$ of size at least $|I_b|/2^{g}$ such that all the cycles $C'_i$ with $i\in I_c$ have the same homology class in $\mathcal{D}$. Suppose that $t\ge 2\cdot 4^g +2$. Then $|I_b|\ge 2\cdot 2^g +1$ and $|I_c|\ge 3$. Let $i,j,k \in I_c$ be three distinct integers. We now consider the subgraph $H$ of $K_{3,t}$ induced by the vertices $a,b,c,u_i,u_j,u_k$, isomorphic to $K_{3,3}$, and show that all its cycles are homologically zero. Indeed, the cycle space of $H$ is generated by the four cycles $au_ibu_j$, $au_ibu_k$, $au_icu_j$ and $au_icu_k$, and each of them is the sum (mod $2$) of two cycles of the same homology class: $au_ibu_j = C_i + C_j$, $au_ibu_k = C_i + C_k$, $au_icu_j = C'_i + C'_j$ and $au_icu_k = C'_i + C'_k$. Corollary~\ref{cor_zero} now implies that $H$ is planar, but this is a contradiction. Therefore $t\le 2\cdot 4^g + 1$.
\end{proof}

To prove the lower bound in Theorem~\ref{theorem_z2}a), we use the same general idea as in the previous proof. However, we will need the following more precise lemma about drawings of $K_{3,3}$, strengthening Corollary~\ref{cor_zero} and Lemma~\ref{lemma_iocr_cycles}. We also replace the pigeonhole principle with an argument from linear algebra.

\begin{lemma}
\label{lemma_K33}
Let $\mathcal{D}$ be an independently even drawing of $K_{3,3}$ on a surface $S$. For $i\in\{1,2\}$, let $\gamma_i$ and $\gamma'_i$ be the closed curves representing the cycles $C_i$ and $C'_i$, respectively, in $\mathcal{D}$. The intersection numbers of their homology classes satisfy
\[\Omega_{S}([\gamma_1],[\gamma'_2])+\Omega_{S}([\gamma'_1],[\gamma_2])=1.
\]
\end{lemma}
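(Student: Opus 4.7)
The plan is to reduce the claim to a Kleitman--Tutte-type parity invariant for plane drawings of $K_{3,3}$, via the crosscap model of Lemma~\ref{lemma_forest}.

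First, I would apply Lemma~\ref{lemma_forest} to $\mathcal{D}$ with the spanning tree $F=\{au_0,au_1,au_2,bu_0,cu_0\}$, obtaining a drawing $\mathcal{E}$ of $K_{3,3}$ in the plane with $h$ crosscaps such that $y^{\mathcal{E}}_f=0$ for each tree edge $f$. The non-tree edges $bu_1,bu_2,cu_1,cu_2$ have $C_1,C_2,C'_1,C'_2$ as their fundamental cycles; since tree edges have zero $y$-vector, the homology class of each fundamental cycle is represented by the $y$-vector of its non-tree edge, and so
\[
\Omega_S([\gamma_1],[\gamma'_2])\equiv y^{\mathcal{E}}_{bu_1}\cdot y^{\mathcal{E}}_{cu_2}\pmod 2 \text{ and } \Omega_S([\gamma'_1],[\gamma_2])\equiv y^{\mathcal{E}}_{cu_1}\cdot y^{\mathcal{E}}_{bu_2}\pmod 2.
\]

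Next, I would turn $\mathcal{E}$ into an honest plane drawing $\mathcal{E}'$ of $K_{3,3}$ by resolving each crosscap: inside its disc, each passing portion of an edge is redrawn as a simple arc joining its antipodal boundary points. Any two antipodal chords cross an odd number of times inside the disc, so this conversion adds $y^{\mathcal{E}}_e\cdot y^{\mathcal{E}}_f \pmod 2$ crossings between every pair of edges $(e,f)$. Combined with condition~(1) of Lemma~\ref{lemma_forest}, the parity of $\mathrm{cr}_{\mathcal{E}'}(e,f)$ for every independent pair $(e,f)$ equals $y^{\mathcal{E}}_e\cdot y^{\mathcal{E}}_f$. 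Since tree edges contribute nothing and the only independent pairs among the non-tree edges are $(bu_1,cu_2)$ and $(bu_2,cu_1)$, the sum of parities of crossings of independent edges in $\mathcal{E}'$ equals precisely $y^{\mathcal{E}}_{bu_1}\cdot y^{\mathcal{E}}_{cu_2}+y^{\mathcal{E}}_{bu_2}\cdot y^{\mathcal{E}}_{cu_1} \pmod 2$.

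To finish, I would establish that in every plane drawing of $K_{3,3}$ the sum of parities of crossings of independent edge pairs equals $1\pmod 2$. This classical invariant is preserved by Reidemeister~II moves (which change two crossings in one pair) and by pushing an edge $e$ across a non-endpoint vertex $v$: since $K_{3,3}$ is bipartite and $3$-regular, exactly two of the three edges at $v$ are independent of $e$ (the third shares the endpoint of $e$ lying in $v$'s bipartition class), so the total parity changes by $2\equiv 0\pmod 2$. Evaluating on a plane drawing of $K_{3,3}$ with a single crossing (necessarily between two independent edges) gives the invariant as $1$, and applying this to $\mathcal{E}'$ yields $\Omega_S([\gamma_1],[\gamma'_2])+\Omega_S([\gamma'_1],[\gamma_2])=1$. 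The main obstacle will be this last step, whose rigorous justification requires specifying a complete set of elementary moves that connect the combinatorial space of plane drawings of $K_{3,3}$ and checking invariance on each; the $3$-regularity and bipartiteness of $K_{3,3}$ are essential, since for general graphs no such universal parity invariant exists.
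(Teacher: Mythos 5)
Your proof is correct, but it takes a genuinely different route from the paper's. The paper gives a direct geometric argument: it normalizes $\mathcal{D}$ to an even drawing (using that $K_{3,3}$ has maximum degree $3$), takes a small neighborhood $O$ of the shared edge $au_0$, deforms $\gamma_1,\gamma'_2,\gamma'_1,\gamma_2$ inside $O$, and reads off the two intersection numbers directly from the rotations at $a$ and $u_0$: one pair crosses once inside $O$ and the other not at all, while all crossings outside $O$ are even. In contrast, your argument translates the problem to the crosscap model via Lemma~\ref{lemma_forest} with a spanning tree of the $K_{3,3}$, observes that only two independent pairs of non-tree edges survive and that these encode exactly $\Omega_S([\gamma_1],[\gamma'_2])$ and $\Omega_S([\gamma'_1],[\gamma_2])$, and then reduces to the Kleitman parity theorem for plane drawings of $K_{3,3}$. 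The paper actually anticipates this alternative: it remarks that Lemma~\ref{lemma_K33} follows from Corollary~\ref{cor_kuratowski_forest}, which is precisely the Lemma~\ref{lemma_forest} + Kleitman combination you use, but then opts to give the direct curve-deformation proof instead. Your route is cleaner in that it avoids tracking the local picture around the shared edge and rotations; the paper's direct proof is self-contained and also recovers the individual values of the two intersection numbers, not just their sum. One simplification: you do not need to re-derive the parity invariant via Reidemeister moves and vertex-passes (the step you flag as the main obstacle) --- you can simply invoke the paper's Lemma~\ref{lemma_parity_K5_K3_3} (Kleitman) or Corollary~\ref{cor_kuratowski_forest}. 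Your sketch of Kleitman's argument is essentially correct and uses bipartiteness and $3$-regularity in the right way, but making it rigorous requires specifying a complete set of elementary moves connecting any two plane drawings (including changes of rotation) and handling degenerate events, all of which Sz\'ekely's cited exposition does; citing it is the economical choice.
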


Lemma~\ref{lemma_K33} is a consequence of Corollary~\ref{cor_kuratowski_forest}. Here we include a direct proof using a different method.

\begin{proof}
Since the maximum degree of $K_{3,3}$ is $3$, we may assume that the drawing $\mathcal{D}$ is even: if some adjacent edges cross oddly, we may modify the drawing locally around their common vertex so that they cross evenly (see Figure~\ref{obr_deg3}), without changing the values of the intersection form.

Cairns and Nikolayevsky~\cite[Lemma 1]{CN00_thrackles} formulated a special case of an identity expressing the intersection form $\Omega_{S}$ as the sum of a ``combinatorial'' crossing number of cycles and the number of crossings of their edges. We use an analogous identity for the drawing $\mathcal{D}$, and also include its derivation to make the proof self-contained. 

\begin{figure}
\begin{center}
\includegraphics{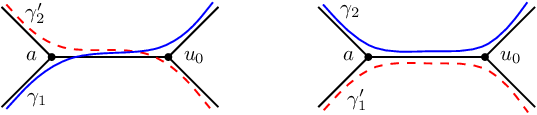}
\end{center}
\caption{The curves $\gamma_1,\gamma'_2,\gamma'_1,\gamma_2$ after deformation in the neighborhood of their common edge $au_0$.}
\label{obr_cis_trans}
\end{figure}

The cycles $C_1$ and $C'_2$ share only the vertices $a$ and $u_0$ and the edge $au_0$, and the same is true for the cycles $C'_1$ and $C_2$. Let $O$ be a small neighborhood of the curve representing the edge $au_0$ in $\mathcal{D}$. Deform the curves $\gamma_1,\gamma_2, \gamma'_1,\gamma'_2$ within $O$ so that they cross each other at most once in $O$; see Figure~\ref{obr_cis_trans}. 
Assume without loss of generality that the rotation of $a$ in $\mathcal{D}$ is $(u_0,u_1,u_2)$, the rotation of $u_0$ in $\mathcal{D}$ is $(a,b,c)$, and that the signature of the edge $au_0$ is positive if $S$ is not orientable. Then the curves obtained by deforming $\gamma_1$ and $\gamma'_2$ cross exactly once in $O$, and the curves obtained by deforming $\gamma'_1$ and $\gamma_2$ do not intersect in $O$. All the other crossings between these closed curves coincide with the crossings between edges in $\mathcal{D}$. Since $\mathcal{D}$ is an even drawing, the value of the intersection form is determined by the parity of the number of crossings inside $O$. In particular, we have $\Omega_{S}([\gamma_1],[\gamma'_2])=1$ and $\Omega_{S}([\gamma'_1],[\gamma_2])=0$.
\end{proof}

\begin{proposition}
We have $\mathrm{g}_0(K_{3,t}) \ge \lceil(t-2)/4\rceil$ and $\mathrm{eg}_0(K_{3,t}) \ge \lceil(t-2)/2\rceil$.
\end{proposition}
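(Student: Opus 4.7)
The plan is to encode the conclusion of Lemma~\ref{lemma_K33}, applied to every $K_{3,3}$-subgraph of $K_{3,t}$ on a vertex set of the form $\{a,b,c,u_0,u_i,u_j\}$, as a single Gram-matrix identity, and then bound the rank of that Gram matrix from both sides.

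Fix an independently even drawing $\mathcal{D}$ of $K_{3,t}$ on a surface $S$, and for each $i \in [t-1]$ let $\gamma_i$, respectively $\gamma'_i$, denote the closed curve representing the cycle $C_i$, respectively $C'_i$, in $\mathcal{D}$. Set $v_i := ([\gamma_i],[\gamma'_i]) \in H_1(S;\mathbb{Z}_2) \oplus H_1(S;\mathbb{Z}_2)$, and on this direct sum introduce the bilinear form $B((x,y),(x',y')) := \Omega_S(x,y') + \Omega_S(y,x')$. For each pair of distinct indices $i,j \in [t-1]$, the subgraph on $\{a,b,c,u_0,u_i,u_j\}$ is a copy of $K_{3,3}$ (with $u_0,u_i,u_j$ playing the roles of $u_0,u_1,u_2$ in Lemma~\ref{lemma_K33}), and the restriction of $\mathcal{D}$ to it remains independently even, so the lemma gives $B(v_i,v_j) = 1$. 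Meanwhile $B(v_i,v_i) = 2\,\Omega_S([\gamma_i],[\gamma'_i]) = 0$ in $\mathbb{Z}_2$ by the symmetry of $\Omega_S$. Thus the $(t-1)\times(t-1)$ Gram matrix of $v_1,\dots,v_{t-1}$ with respect to $B$ is exactly $J+I$ over $\mathbb{Z}_2$, where $J$ is the all-ones matrix and $I$ is the identity.

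A direct computation yields $(J+I)^2 = (t-1)J + I$ over $\mathbb{Z}_2$: if $t-1$ is even this equals $I$, so $J+I$ is invertible of rank $t-1$; if $t-1$ is odd it equals $J+I$, so $J+I$ is idempotent with $1$-dimensional kernel spanned by the all-ones vector, hence of rank $t-2$. In either case the rank is at least $t-2$. For the upper bound, in block form $B$ is represented by the matrix with zero diagonal blocks and copies of $\Omega_S$ as off-diagonal blocks, so $\mathrm{rk}(B) = 2\,\mathrm{rk}(\Omega_S)$. By Poincar\'e duality over $\mathbb{Z}_2$ the intersection form is nondegenerate on any closed surface, so $\mathrm{rk}(\Omega_S) = \dim H_1(S;\mathbb{Z}_2) = \mathrm{eg}(S)$. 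Since the rank of any Gram matrix is bounded by the rank of the underlying form, $t-2 \le 2\,\mathrm{eg}(S)$, yielding $\mathrm{eg}_0(K_{3,t}) \ge \lceil(t-2)/2\rceil$. When $S = M_g$ is orientable, $\dim H_1(S;\mathbb{Z}_2) = 2g$ and the same argument gives $t-2 \le 4g$, proving $\mathrm{g}_0(K_{3,t}) \ge \lceil(t-2)/4\rceil$.

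The main conceptual step is the packaging: combining $[\gamma_i]$ and $[\gamma'_i]$ into a single vector $v_i$ together with the ``swap'' bilinear form $B$ is what turns the asymmetric identity of Lemma~\ref{lemma_K33} into a symmetric Gram matrix whose rank has a clean two-sided control. Once this setup is in place, the rank computation of $J+I$ over $\mathbb{Z}_2$ and the standard nondegeneracy of $\Omega_S$ complete the argument; no further combinatorial input is needed.
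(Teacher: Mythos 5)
Your proof is correct. It reaches the same conclusion as the paper by a closely related but differently organized linear-algebraic argument, and the difference is worth noting. The paper works directly with the asymmetric matrix $A_{ij}=\Omega_S([\gamma_i],[\gamma'_j])$, observes via Lemma~\ref{lemma_K33} that $A+A^{\top}=J+I$ over $\mathbb{Z}_2$ (so $A$ is a tournament matrix), and then invokes de Caen's inequality $\mathrm{rank}(A)\ge \tfrac12\mathrm{rank}(A+A^{\top})$ together with the bound $\mathrm{rank}(A)\le \mathrm{rank}(\Omega_S)$, the latter holding because $A$ is a product $U^{\top}\Omega W$. You instead symmetrize from the start: your Gram matrix with respect to the swap form $B$ is exactly $A+A^{\top}=J+I$, and you bound its rank by $\mathrm{rank}(B)=2\,\mathrm{rank}(\Omega_S)$ using the block structure of $B$. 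Arithmetically the two arguments are equivalent; the factor of two that appears in de Caen's inequality reappears in your version as the doubling of the ambient homology space. Your packaging has the advantage of staying entirely within the standard ``Gram matrix rank is at most rank of the form'' framework, avoiding any mention of tournament matrices and making the role of $\Omega_S$ more transparent. The paper's version is slightly shorter because the matrix $A$ comes straight out of Lemma~\ref{lemma_K33} without introducing an auxiliary doubled space and bilinear form. One small remark: you do not actually need the full nondegeneracy of $\Omega_S$ from Poincar\'e duality; the trivial inequality $\mathrm{rank}(\Omega_S)\le \dim H_1(S;\mathbb{Z}_2)=\mathrm{eg}(S)$ already suffices, as in the paper.
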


\begin{proof}
Let $\mathcal{D}$ be an independently even drawing of $K_{3,t}$ on a surface $S$ of Euler genus $g$. For every $i\in[t-1]$, let $\gamma_i$ and $\gamma'_i$ be the closed curves representing the cycles $C_i$ and $C'_i$, respectively, in $\mathcal{D}$.
For every $i,j\in[t-1]$, $i<j$, we apply Lemma~\ref{lemma_K33} to the drawing of $K_{3,3}$ induced by the vertices $a,b,c,u_0,u_i,u_j$ in $\mathcal{D}$. Let $A$ be the $(t-1)\times(t-1)$ matrix with entries 
\[
A_{i,j}=\Omega_{S}([\gamma_i],[\gamma'_j]).
\]
Lemma~\ref{lemma_K33} implies that $A_{i,j}+A_{j,i}=1$ whenever $i\neq j$; in other words, $A$ is the sum of a \emph{tournament matrix} and a diagonal matrix. This implies that $A+A^{\top}$, with the addition mod $2$, is a matrix with zeros on the diagonal and $1$-entries elsewhere. De Caen~\cite{Ca91_tournament} shows, by a simple argument, that the rank of $A$ over $\mathbb{Z}_2$ is at least $(t-2)/2$. 
Hence, the rank of $\Omega_{S}$ is at least $(t-2)/2$, which implies $g\ge (t-2)/2$ by Fact~\ref{fact_full_rank}. 
\end{proof}

\subsection{Proof of Theorem~\ref{theorem_z2}b)}

Before proving Theorem~\ref{theorem_z2}b) we first show an asymptotic $\Omega(\log t)$ lower bound on the (Euler) $\mathbb{Z}_2$-genus for a more general class of graphs that includes the $t$-Kuratowski graphs of types f), g) and h). 

The definition of gluing a pair of vertices from Subsection~\ref{subsection_zed} can be extended in a straightforward way to \emph{gluing} an arbitrary finite \emph{set of vertices}.
Let $H$ be a $2$-connected graph and let $x,y$ be two nonadjacent vertices of $H$. Let $t$ be a positive integer. The \emph{$2$-amalgamation} of $t$ copies of $H$ (with respect to $x$ and $y$), denoted by $\amalg_{x,y} tH$, is the graph obtained from $t$ disjoint copies of $H$ by gluing all $t$ copies of $x$ into a single vertex and gluing all $t$ copies of $y$ into a single vertex. The two vertices obtained by gluing are again denoted by $x$ and $y$.

An \emph{$xy$-wing} is a $2$-connected graph $H$ with two nonadjacent vertices $x$ and $y$ such that the subgraph $H-x-y$ is connected, and 
the graph obtained from $H$ by adding the edge $xy$
is nonplanar. Clearly, the graphs $K_5 - e$ and $K_{3,3}-e$, where $e=xy$, are $xy$-wings, and similarly $K_{3,3}$, with nonadjacent vertices $x$ and $y$, is an $xy$-wing. The $t$-Kuratowski graphs of types f) and g) are obtained from $\amalg_{x,y} t(K_5 - e)$ and $\amalg_{x,y} t(K_{3,3} - e)$, respectively, by adding the edge $xy$, whereas the $t$-Kuratowski graph of type h) is exactly the $2$-amalgamation $\amalg_{x,y} t(K_{3,3})$. 
See Figure~\ref{obr_wing} for an illustration of $2$-amalgamations of two $xy$-wings.

\begin{figure}
\begin{center}
\includegraphics{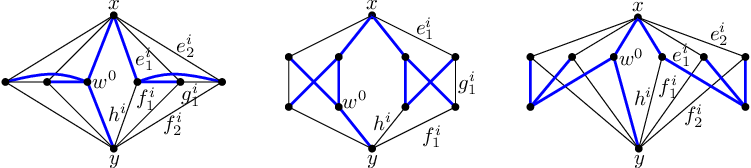}
\end{center}
\caption{$2$-amalgamations of two Kuratowski $xy$-wings. The spanning tree $T$ is drawn bold.}
\label{obr_wing}
\end{figure}

Let $H$ be an $xy$-wing. We will use the following notation. Let $w$ be a vertex of $H$ adjacent to $y$ and let $F'$ be a spanning tree of $H-x-y$.
Let $F$ be a spanning tree of $H-y$ extending $F'$. In the $2$-amalgamation $\amalg_{x,y} tH$ we distinguish the $i$th copy of $H$, its vertices, edges, and subgraphs, by the superscript $i\in \{0,1,\dots, t-1\}$. In particular, for every $i\in \{0,1,\dots, t-1\}$, $H^i$ is an induced subgraph of $\amalg_{x,y} tH$, $F^i$ is a spanning tree of $H^i-y$ and $x$ is a leaf of $F^i$. For a given $t$, let 
\[
T=yw^0 + \bigcup_{i=0}^{t-1}F_i
\]
be a spanning tree of $\amalg_{x,y} tH$. For every edge $e\in E(\amalg_{x,y} tH) \setminus E(T)$, let $C_e$ be the fundamental cycle of $e$ with respect to $T$; that is, the unique cycle in $T+e$.

Enumerate the edges of $E(H)\setminus E(F)$ incident to $x$ as $e_1,\dots,e_k$, the edges of $E(H)\setminus E(F)\setminus \{yw\}$ incident to $y$ as $f_1,\dots,f_l$, and the edges of $E(H-x-y)\setminus E(F)$ as $g_1,\dots,g_m$. Let $h$ be the edge $yw$. Thus, for every $i\in[t-1]$, we have $E(H^i)\setminus E(T)=\{e_1^i, \dots, e_k^i\}\cup\{f_1^i, \dots, f_l^i\}\cup \{g_1^i, \dots, g_m^i\} \cup \{h^i\}$. 

If $C$ and $C'$ are cycles in $\amalg_{x,y} tH$, we denote by $C+C'$ the element of the cycle space of $\amalg_{x,y} tH$ obtained by adding $C$ and $C'$ mod $2$. We also regard $C+C'$ as a subgraph of $\amalg_{x,y} tH$ with no isolated vertices. Note that if $C$ and $C'$ are fundamental cycles sharing at least one edge then $C+C'$ is again a cycle.

\begin{observation}
\label{obs_cykly_mimo_x_y}
Let $i\in [t-1]$. Refer to Figure~\ref{obr_fund_cycles}.
\begin{enumerate}
\item[{\rm a)}] For every $j\in [k]$, the cycle $C_{e^i_j}$ is a subgraph of $H^i-y$.
\item[{\rm b)}] For every $j\in [l]$, the cycle $C_{f^i_j}+C_{h^i}$ is a subgraph of $H^i-x$.
\item[{\rm c)}] For every $j\in [m]$, the cycle $C_{g^i_j}$ is a subgraph of $H^i-x-y$. 
\end{enumerate} 
The cycles $C_{e^i_j}$ with $j\in [k]$, $C_{f^i_j}+C_{h^i}$ with $j\in [l]$, and $C_{g^i_j}$ with $j\in [m]$ generate the cycle space of $H^i$; in particular, they are the fundamental cycles of $H^i$ with respect to the spanning tree $F^i + yw^i$.
\qed
\end{observation}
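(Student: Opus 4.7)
The plan is to verify each part by directly examining the fundamental cycle in $T$ of each non-tree edge. The structure of $T$ is a ``Y'': the spanning trees $F^0, \ldots, F^{t-1}$ are identified at the common vertex $x$, and $y$ is attached solely to $H^0$ via the edge $yw^0$. Consequently, the unique $T$-path between two vertices of a single copy $H^i$ (neither equal to $y$) lies entirely in $F^i$, and any $T$-path starting at $y$ must first traverse $yw^0$, then run through $F^0$ to $x$, then through $F^i$ to its target.

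Parts (a) and (c) will follow immediately from this. For (a), the edge $e_j^i$ joins $x$ to a vertex of $V(H^i)\setminus\{y\}$, so the closing $T$-path lies in $F^i \subseteq H^i - y$. For (c), both endpoints of $g_j^i$ lie in $V(H^i)\setminus\{x,y\}$; since $x$ is a leaf of $F^i$, the subtree $F^i - x$ equals the spanning tree $F'^i$ of $H^i - x - y$, so the unique $T$-path between the endpoints avoids both $x$ and $y$.

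Part (b) is the step requiring a small cancellation argument. The fundamental cycles $C_{f_j^i}$ and $C_{h^i}$ share the same ``long arm'' in $T$: starting at $y$, one traverses $yw^0$, then the $w^0$-to-$x$ path in $F^0$, then the unique edge at $x$ in $F^i$. In the sum modulo $2$ this entire shared arm cancels. What remains is $f_j^i + h^i$ together with the unique $F^i$-path between $w^i$ and the other endpoint $v_j^i$ of $f_j^i$; since $x$ is a leaf of $F^i$, this path does not visit $x$, so it lies in $F'^i$. Hence $C_{f_j^i} + C_{h^i}$ is a cycle contained in $H^i - x$.

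For the generating statement, $F^i + yw^i$ is a spanning tree of $H^i$ whose non-tree edges are exactly $\{e_j^i\}\cup\{f_j^i\}\cup\{g_j^i\}$, giving $k+l+m$ fundamental cycles in $H^i$. A one-to-one matching is obtained by checking that $C_{e_j^i}$, $C_{f_j^i}+C_{h^i}$, and $C_{g_j^i}$ coincide respectively with the fundamental cycles of $e_j^i$, $f_j^i$, and $g_j^i$ in $F^i + yw^i$; the only nontrivial match is the one for $f_j^i$, whose fundamental cycle in $F^i + yw^i$ equals $f_j^i + yw^i$ plus the $v_j^i$-to-$w^i$ path in $F'^i$, which is exactly the cycle produced by the cancellation above. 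Since fundamental cycles with respect to a spanning tree generate the entire cycle space, the claim follows. The only subtle point is the bookkeeping in (b), where the leaf property of $x$ in $F^i$ is precisely what is needed to force the remaining cycle into $H^i - x$ rather than merely $H^i$.
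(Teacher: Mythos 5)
Your argument is correct. The paper states this observation with an immediate end-of-proof mark and no written justification, and your direct verification---tracing the $T$-paths, invoking the fact that $x$ is a leaf of $F^i$ to keep the paths for (b) and (c) inside $F'^i$, computing $C_{f^i_j}+C_{h^i}$ as $\{f^i_j,h^i\}$ plus the $F^i$-path from $v^i_j$ to $w^i$, and matching the three families against the fundamental cycles of $H^i$ with respect to $F^i+yw^i$---supplies exactly the routine details the authors treated as evident.
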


\begin{figure}
\begin{center}
\includegraphics{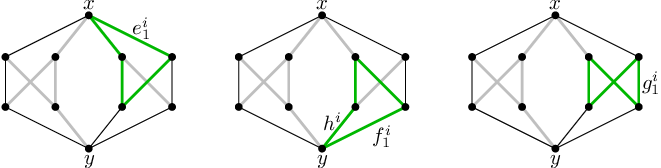}
\end{center}
\caption{Examples of cycles $C_{e^i_1}$, $C_{f^i_1}+C_{h^i}$ and $C_{g^i_1}$ in an amalgamation of Kuratowski $xy$-wings.}
\label{obr_fund_cycles}
\end{figure}

\begin{corollary}
\label{cor_disjunktni_cykly}
Let $i,i'\in [t-1]$ be distinct indices. Then the following pairs of cycles are vertex-disjoint, for all possible pairs of indices $j,j'$:
\begin{enumerate}
\item[{\rm a)}] $C_{e^i_j}$ and $C_{f^{i'}_{j'}}+C_{h^{i'}}$,
\item[{\rm b)}] $C_{f^i_j}+C_{h^i}$ and $C_{g^{i'}_{j'}}$,
\item[{\rm c)}] $C_{e^i_j}$ and $C_{g^{i'}_{j'}}$,
\item[{\rm d)}] $C_{g^{i}_j}$ and $C_{g^{i'}_{j'}}$.
\end{enumerate} 
\end{corollary}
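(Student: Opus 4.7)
The plan is to derive the corollary immediately from Observation~\ref{obs_cykly_mimo_x_y} by exploiting the fact that distinct copies of $H$ in the $2$-amalgamation $\amalg_{x,y} tH$ share only the two vertices $x$ and $y$. More precisely, for distinct $i,i' \in \{0,1,\dots,t-1\}$ the definition of the $2$-amalgamation gives $V(H^i) \cap V(H^{i'}) = \{x,y\}$.

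The four statements are then verified by a uniform set-theoretic computation. First I would note the three containments provided by Observation~\ref{obs_cykly_mimo_x_y}: $V(C_{e^i_j}) \subseteq V(H^i)\setminus\{y\}$, $V(C_{f^i_j}+C_{h^i}) \subseteq V(H^i)\setminus\{x\}$, and $V(C_{g^i_j}) \subseteq V(H^i)\setminus\{x,y\}$, together with the analogous containments for index $i'$. For case (a), the vertex set of $C_{e^i_j}$ lies in $V(H^i)\setminus\{y\}$ and that of $C_{f^{i'}_{j'}}+C_{h^{i'}}$ lies in $V(H^{i'})\setminus\{x\}$; intersecting and using $V(H^i)\cap V(H^{i'})=\{x,y\}$ leaves a subset of $\{x,y\}\setminus\{y\} \cap \{x,y\}\setminus\{x\} = \{x\}\cap\{y\}=\emptyset$. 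Cases (b), (c) and (d) are verified in exactly the same manner: in each one, one of the two cycles avoids $x$ while the other avoids $y$, or both cycles avoid both of $x$ and $y$, so the only possible common vertices, namely $x$ and $y$, are always excluded.

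There is no real obstacle here; the content of the corollary is purely combinatorial and follows by unpacking definitions once Observation~\ref{obs_cykly_mimo_x_y} is in hand. The one small point worth explicitly spelling out, to justify writing ``$C_{f^i_j}+C_{h^i}$'' as a cycle rather than just an element of the cycle space, is that $C_{f^i_j}$ and $C_{h^i}$ share the edge $h^i=yw^i$ (both fundamental cycles pass through the unique tree path from $y$ to their nontree edge via $w^0$, and hence through $h^i$), so their symmetric difference is indeed a single cycle contained in $H^i-x$, as asserted in Observation~\ref{obs_cykly_mimo_x_y}(b). With that in place, the four disjointness claims reduce to the single observation $V(H^i)\cap V(H^{i'})=\{x,y\}$, and the proof is complete.
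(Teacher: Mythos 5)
Your main argument is correct and is essentially the (unstated) argument the paper has in mind: the corollary is given without proof, and it follows immediately from Observation~\ref{obs_cykly_mimo_x_y} together with the fact that distinct copies $H^i$ and $H^{i'}$ intersect exactly in $\{x,y\}$. Your case-by-case check of the four items is exactly what is needed.

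One small factual slip occurs in your parenthetical justification that $C_{f^i_j}+C_{h^i}$ is a cycle. You claim the two fundamental cycles share the edge $h^i=yw^i$. That is not so: for $i\in[t-1]$ the edge $h^i$ is a \emph{non-tree} edge, so it lies only on its own fundamental cycle $C_{h^i}$ and not on $C_{f^i_j}$. What the two cycles do share is the edge $h^0=yw^0$ (the unique tree edge at $y$) together with the tree path from $w^0$ through $F^0$ to $x$ and the common initial segment of the two $x$-paths in $F^i$, and it is these shared edges that make the symmetric difference a single cycle. The fact that the resulting cycle avoids $x$ uses that $x$ is a leaf of $F^i$, so the two $x$-paths inside $F^i$ share their first edge and hence $x$ cancels. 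In any case this point is already asserted in Observation~\ref{obs_cykly_mimo_x_y}(b), so the corrected remark is only needed if you want to rederive it; the disjointness argument itself is unaffected.
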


Our first lower bound on the (Euler) $\mathbb{Z}_2$-genus of $2$-amalgamations of $xy$-wings is similar to Proposition~\ref{prop_K3t_log}, and combines the pigeonhole principle and Lemma~\ref{lemma_iocr_cycles}.

\begin{proposition}
Let $H$ be an $xy$-wing. Then $2\mathrm{g}_0(\amalg_{x,y} tH) \ge \mathrm{eg}_0(\amalg_{x,y} tH)\ge \Omega(\log t)$.
\end{proposition}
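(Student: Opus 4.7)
The inequality $2\mathrm{g}_0(\amalg_{x,y}tH)\ge\mathrm{eg}_0(\amalg_{x,y}tH)$ is immediate from~\eqref{eq_genusy}. For the main lower bound the plan is to mimic the proof of Proposition~\ref{prop_K3t_log}: combine a pigeonhole argument on homology types of individual copies of $H$ with Lemma~\ref{lemma_iocr_cycles} to derive a contradiction when $t$ is large.

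Fix an independently even drawing $\mathcal{D}$ of $\amalg_{x,y}tH$ on a surface $S$ of Euler genus $g$ and identify $H_1(S;\mathbb{Z}_2)$ with $\mathbb{Z}_2^g$. For each copy $i\in\{1,\dots,t-1\}$ (the index $i=0$ is skipped because $h^0\in T$) I would assign the type
\[
\tau(i)=\bigl([C_{e^i_j}]_{j\in[k]},\,[C_{f^i_j}+C_{h^i}]_{j\in[l]},\,[C_{g^i_j}]_{j\in[m]},\,[C_{h^i}]\bigr)\in\mathbb{Z}_2^{g(k+l+m+1)}.
\]
If $t-1>2^{g(k+l+m+1)}$, the pigeonhole principle produces distinct indices $i,i'\ge 1$ with $\tau(i)=\tau(i')$. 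I would then work inside $G_{i,i'}=H^i\cup H^{i'}$ equipped with the induced drawing: since $H$ is $2$-connected, any $xy$-path in $H^{i'}$ together with $H^i$ is a subdivision of $H+xy$, so $G_{i,i'}$ is nonplanar. Lemma~\ref{lemma_iocr_cycles} applied with the spanning tree $T_{i,i'}=F^i\cup F^{i'}\cup\{h^i\}$ then yields independent edges $e,f\in E(G_{i,i'})\setminus T_{i,i'}$ with $\Omega_S([D_e],[D_f])=1$, where $D_e,D_f$ denote the fundamental cycles in $T_{i,i'}$.

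A short tree-walk computation identifies all fundamental cycles of $G_{i,i'}$: for non-tree edges of $H^i$ they are the cycles $C_{e^i_j}$, $C_{f^i_j}+C_{h^i}$ and $C_{g^i_j}$ of Observation~\ref{obs_cykly_mimo_x_y}; for non-tree edges of $H^{i'}$ avoiding $y$ they are $C_{e^{i'}_j}$ and $C_{g^{i'}_j}$; the edge $h^{i'}$ contributes $D_{h^{i'}}=C_{h^i}+C_{h^{i'}}$; and each $f^{i'}_j$ contributes $D_{f^{i'}_j}=(C_{f^{i'}_j}+C_{h^{i'}})+D_{h^{i'}}$. Under $\tau(i)=\tau(i')$, the equality $[C_{h^i}]=[C_{h^{i'}}]$ forces $[D_{h^{i'}}]=0$, so every fundamental cycle of $G_{i,i'}$ has homology class in $\{A_j,B_j,G_j\}_j$, where $A_j=[C_{e^i_j}]$, $B_j=[C_{f^i_j}+C_{h^i}]$, $G_j=[C_{g^i_j}]$. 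Corollary~\ref{cor_disjunktni_cykly}, Observation~\ref{obs_disjoint_cycles} and type equality together yield $\Omega_S(A_j,B_{j'})=\Omega_S(A_j,G_{j'})=\Omega_S(B_j,G_{j'})=\Omega_S(G_j,G_{j'})=0$ for all $j,j'$. Moreover, two non-tree edges both incident to $x$ (respectively $y$) share that vertex and so are never independent, which rules out independent pairs of $\alpha$-cycles (respectively $\beta$-cycles). Hence every independent pair $(e,f)$ falls into a vanishing case, giving $\Omega_S([D_e],[D_f])=0$ and contradicting Lemma~\ref{lemma_iocr_cycles}. Therefore $t-1\le 2^{g(k+l+m+1)}$, whence $g=\Omega(\log t)$ with a constant depending only on $H$.

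The main obstacle I expect is the presence of the ``bridge'' fundamental cycle $D_{h^{i'}}$, which is unavoidable: any spanning tree of $G_{i,i'}$ must omit one of $h^i$ and $h^{i'}$, and the resulting cycle a priori may carry nontrivial homology even when all cycles intrinsic to $H^i$ and to $H^{i'}$ have matching classes. Enlarging the pigeonhole type by the extra coordinate $[C_{h^i}]$ is precisely what enforces $[D_{h^{i'}}]=0$ and allows the case analysis in the last step to close.
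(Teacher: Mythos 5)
Your proposal is correct and follows essentially the same approach as the paper: pigeonhole on homology types of copies, then apply Lemma~\ref{lemma_iocr_cycles} to a nonplanar subgraph spanning two matching copies, using Corollary~\ref{cor_disjunktni_cykly} and Observation~\ref{obs_disjoint_cycles} to show all relevant intersection numbers vanish. The only cosmetic difference is that you work in the full union $G_{i,i'}=H^i\cup H^{i'}$ with spanning tree $F^i\cup F^{i'}\cup\{h^i\}$, whereas the paper uses the smaller subgraph $H^{i,i'}=H^i\cup P^{i'}$ (with the path $P^{i'}$ replacing all of $H^{i'}$), which shortens the case analysis but is otherwise the same argument.
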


\begin{proof}
Let $\mathcal{D}$ be an independently even drawing of $\amalg_{x,y} tH$ on a surface $S$ of Euler genus $g$.
For every $i\in[t-1]$ and $e\in E(H)\setminus E(F)$, let $\gamma(e^i)$ be the closed curve representing $C_{e^i}$ in $\mathcal{D}$. 

The homology class $[\gamma(e^i)]$ has one of $2^{g}$ possible values in $H_1(S;\mathbb{Z}_2)$.
Thus, if $t\ge 2^{g(k+l+m+1)} + 2$, then there are distinct indices $i,i'\in[t-1]$ such that for every $e\in E(H)\setminus E(F)$ we have $[\gamma(e^i)]=[\gamma(e^{i'})]$. Using this, we can compute the intersection form for certain pairs of cycles by replacing them with vertex-disjoint pairs; this gives the left equality in each of the following formulas. The right equality follows from Corollary~\ref{cor_disjunktni_cykly} and Observation~\ref{obs_disjoint_cycles}.
In particular, for all possible pairs of indices $j,j'$, we have
\begin{align}
\Omega_{S}([\gamma(e^i_j)],[\gamma(f^i_{j'})]+[\gamma(h^i)])=\Omega_{S}([\gamma(e^i_j)],[\gamma(f^{i'}_{j'})]+[\gamma(h^{i'})])&=0,\label{eq1} \\
\Omega_{S}([\gamma(f^i_{j})]+[\gamma(h^i)],[\gamma(g^i_{j'})])=\Omega_{S}([\gamma(f^i_{j})]+[\gamma(h^i)],[\gamma(g^{i'}_{j'})])&=0,\label{eq2} \\
\Omega_{S}([\gamma(e^i_j)],[\gamma(g^i_{j'})])=\Omega_{S}([\gamma(e^i_j)],[\gamma(g^{i'}_{j'})])&=0,\label{eq3} \\
\Omega_{S}([\gamma(g^i_{j})],[\gamma(g^i_{j'})])=\Omega_{S}([\gamma(g^i_{j})],[\gamma(g^{i'}_{j'})])&=0.\label{eq4}
\end{align}

\begin{figure}
\begin{center}
\includegraphics{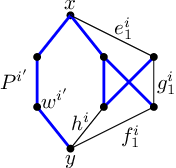}
\end{center}
\caption{An example of the graph $H^{i,i'}$. Its spanning tree $F^{i,i'}$ is drawn bold.}
\label{obr_Hii}
\end{figure}

Let $H^{i,i'}$ be the union of the graph $H^i$ with the unique $xy$-path $P^{i'}$ in $F^{i'}+ yw^{i'}$; see Figure~\ref{obr_Hii}. Since $H$ is an $xy$-wing, the graph $H^{i,i'}$ is nonplanar. The graph $F^{i,i'}=F^i \cup P^{i'}$ is a spanning tree of $H^{i,i'}$, and $E(H^{i,i'})\setminus E(F^{i,i'}) = E(H^i)\setminus E(T)$. 

The fundamental cycle $C'_{h^i}$ of $h^i$ in $H^{i,i'}$ with respect to $F^{i,i'}$ is equal to $C_{h^i}+C_{h^{i'}}$. Since $[\gamma(h^i)]=[\gamma(h^{i'})]$, the cycle $C'_{h^i}$ is homologically zero. 

For every $j\in[k]$, the fundamental cycle of $e^i_j$ in $H^{i,i'}$ with respect to $F^{i,i'}$ is $C_{e^i_j}$ and its homology class in $\mathcal{D}$ is $[\gamma(e^i_j)]$.

For every $j\in[l]$, the fundamental cycle of $f^i_j$ in $H^{i,i'}$ with respect to $F^{i,i'}$ is $C_{f^i_j}+C_{h^{i'}}$ and its homology class is $[\gamma(f^i_{j'})]+[\gamma(h^{i'})]=[\gamma(f^i_{j'})]+[\gamma(h^i)]$.

For every $j\in[m]$, the fundamental cycle of $g^i_j$ in $H^{i,i'}$ with respect to $F^{i,i'}$ is $C_{g^i_j}$ and its homology class in $\mathcal{D}$ is $[\gamma(g^i_j)]$.

By~\eqref{eq1}--\eqref{eq4}, for every pair of independent edges in $E(H^{i,i'})\setminus E(F^{i,i'})$, the homology classes of their fundamental cycles are orthogonal with respect to $\Omega_{S}$. This is a contradiction with Lemma~\ref{lemma_iocr_cycles} applied to $H^{i,i'}$ and the spanning tree $F^{i,i'}$. Therefore, $t\le 2^{g(k+l+m+1)} + 1$.
\end{proof}

To prove the lower bound in Theorem~\ref{theorem_z2}b), we follow the idea of the previous proof and again replace the pigeonhole principle with an argument from linear algebra.
We will also need the following stronger variant of the Hanani--Tutte theorem and Lemma~\ref{lemma_iocr_cycles} for the graphs $K_5$ and $K_{3,3}$. 

\begin{lemma}[{Kleitman~\cite{Kle76_parity}}]
\label{lemma_parity_K5_K3_3}
In every drawing of $K_5$ and $K_{3,3}$ in the plane the total number of pairs of independent edges crossing an odd number of times is odd.
\end{lemma}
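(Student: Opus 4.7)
The plan is to show that
\[
\pi(D) \;:=\; \sum_{\{e,f\}\text{ independent}} \mathrm{cr}_D(e,f) \pmod 2
\]
depends only on the graph $G$ and not on the drawing $D$, when $G=K_5$ or $K_{3,3}$, and then to exhibit a single drawing with $\pi=1$. Invariance will follow from a standard deformation argument: any two drawings of the same graph in the plane can be connected by a continuous one-parameter family of drawings passing through only finitely many codimension-one degeneracies, namely triple crossings, tangencies of pairs of edges, and an edge sweeping through a vertex not incident to it.

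First I would check that each degeneracy preserves $\pi$. Passing through a triple crossing or a tangency alters the crossing number of every specific pair of edges by $0$ or $\pm 2$, which is harmless. The crucial move is an edge $e$ sweeping across a vertex $v$ not incident to $e$: here $\mathrm{cr}(e,e')$ flips parity for every edge $e'$ at $v$, so $\pi$ changes modulo $2$ by the number of edges at $v$ that are additionally independent from $e$. The key combinatorial input is that this number is always even for $K_5$ and $K_{3,3}$. In $K_5$, for $e=\{u,w\}$ and $v\notin\{u,w\}$, the four edges at $v$ split into two adjacent to $e$ (going to $u$ and $w$) and two independent from $e$ (going to the remaining two vertices). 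In $K_{3,3}$ with bipartition $(A,B)$, $e=\{a,b\}$, and $v\neq a,b$, the three edges at $v$ contain exactly one adjacent to $e$ (namely $\{v,b\}$ if $v\in A$, or $\{v,a\}$ if $v\in B$) and two independent from $e$. Both counts equal~$2$, so $\pi$ is preserved modulo $2$.

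It then remains to exhibit a drawing with $\pi=1$. For $K_5$, I would place the five vertices in convex position and draw all edges as straight segments: the five diagonals of the resulting convex pentagon produce $5$ crossings, each between an independent pair of edges (since diagonals sharing a vertex do not cross, and boundary edges do not cross any other edge), so $\pi\equiv 1\pmod 2$. For $K_{3,3}$, I would arrange the six vertices along a hexagon in cyclic order $a_1,b_1,a_2,b_2,a_3,b_3$, draw straight-line edges, and perturb slightly to avoid the triple intersection at the center; the Hamilton cycle $a_1b_1a_2b_2a_3b_3$ is crossing-free, and the three remaining chords $\{a_1,b_2\}$, $\{a_2,b_3\}$, $\{a_3,b_1\}$ are pairwise vertex-disjoint (hence independent) and pairwise cross exactly once, so $\pi\equiv 1\pmod 2$.

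The hard part will be cleanly justifying that the three listed degeneracies really do exhaust the codimension-one moves needed to connect any two drawings of a fixed graph in the plane, and in particular that rotations at vertices can be altered by a sequence of such moves. I plan to rely on a standard transversality argument in the space of drawings, essentially following Kleitman~\cite{Kle76_parity}, rather than reproving this general-position statement from scratch.
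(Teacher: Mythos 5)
Your proposal is correct and follows Kleitman's original deformation argument, which the paper cites (pointing to Sz\'ekely for a detailed exposition) without reproducing a proof of its own. The crucial parity count you use---that a vertex $v$ has an even number of incident edges independent from a fixed edge $e=\{u,w\}$---is exactly the statement that $v$ has even degree in $G-u-w$, which the paper records as Observation~\ref{obs_euler} and flags as the key ingredient of this lemma; your two anchor drawings with $\pi=1$ are also correct. One small thing worth making explicit is that degeneracies affecting only pairs of adjacent edges at a shared vertex (swapping two consecutive edge-ends in a rotation, or an edge's interior sweeping through one of its own endpoints) never touch $\pi$, so the short list of codimension-one events you analyze does suffice.
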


Lemma~\ref{lemma_parity_K5_K3_3} was also implicitly proved by Sz\'ekely~\cite[Sections 7 and 8]{Sze04_iocr}.

\begin{corollary}
\label{cor_kuratowski_forest}
Let $G=K_5$ or $G=K_{3,3}$. Let $F$ be a forest in $G$. Let $\mathcal{E}$ be a crosscap drawing of $G$ from Lemma~\ref{lemma_forest}. Then there are an odd number of pairs of independent edges $e,f$ in $E(G)\setminus E(F)$ such that $y_e^{\top}y_f=1$.
\qed
\end{corollary}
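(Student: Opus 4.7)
The plan is to reduce the corollary to Kleitman's parity theorem (Lemma~\ref{lemma_parity_K5_K3_3}) by converting the drawing $\mathcal{E}$ in the plane with $h$ crosscaps into a genuinely planar drawing $\mathcal{E}'$ of $G$, and tracking how the crossings change.

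First, I would fix any crosscap and describe how to ``fill it in.'' A crosscap at a point $x$ is a disc with its boundary circle identified antipodally; a curve passing through it an odd number of times meets this boundary in an odd number of antipodal pairs. Replacing the crosscap by an ordinary disc and re-drawing each edge across it, one obtains a planar gadget. The key topological fact is that two chords of a disc whose endpoints interleave on the boundary must cross an odd number of times inside the disc; combined with the antipodal identification, this yields that the parity of the number of crossings of edges $e$ and $f$ created inside the $i$-th crosscap equals $(y^{\mathcal{E}}_e)_i \cdot (y^{\mathcal{E}}_f)_i$. Summing over the $h$ crosscaps, the total parity of crossings between $e$ and $f$ created by the planarization is $y^{\mathcal{E}}_e{}^{\!\top} y^{\mathcal{E}}_f$.

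Next, I would apply this to each pair of independent edges. By condition~1) of Lemma~\ref{lemma_forest}, the crossings of $e$ and $f$ outside the crosscaps in $\mathcal{E}$ are even in number, so in $\mathcal{E}'$ the parity of the number of crossings between $e$ and $f$ equals $y^{\mathcal{E}}_e{}^{\!\top} y^{\mathcal{E}}_f$. By Lemma~\ref{lemma_parity_K5_K3_3} applied to the planar drawing $\mathcal{E}'$ of $G$, the number of independent pairs $e,f$ with $y^{\mathcal{E}}_e{}^{\!\top} y^{\mathcal{E}}_f = 1$ is odd.

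Finally, I would use condition~2) of Lemma~\ref{lemma_forest}: if $e \in E(F)$ then $y^{\mathcal{E}}_e = 0$, so $y^{\mathcal{E}}_e{}^{\!\top} y^{\mathcal{E}}_f = 0$ for every $f$. Hence every independent pair contributing to the odd total must lie entirely within $E(G)\setminus E(F)$, which gives exactly the conclusion of the corollary.

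The only nontrivial step is the combinatorial accounting inside a single crosscap; I would need to verify carefully that curves passing through a crosscap with higher (even or odd) multiplicity still contribute the stated parity, and that crossings outside the crosscaps are unaffected by the replacement. Everything else is an immediate bookkeeping consequence of Lemma~\ref{lemma_forest} and Kleitman's theorem.
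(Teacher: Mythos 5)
Your proof is correct and reconstructs exactly the argument the paper leaves implicit (the corollary is stated without proof): planarize the crosscaps to obtain a genuine planar drawing of $G$, apply Kleitman's Lemma~\ref{lemma_parity_K5_K3_3}, and use conditions 1) and 2) of Lemma~\ref{lemma_forest} to identify the crossing parity of each independent pair with $y_e^{\top}y_f$ and to discard pairs meeting $E(F)$. The one step you flag as needing care does check out: any two chords connecting antipodal boundary points of a disc interleave and hence cross an odd number of times, so the parity of crossings created inside the $i$th crosscap between $e$ and $f$ is $a_ib_i\equiv (y_e)_i(y_f)_i\pmod 2$ regardless of the passage multiplicities $a_i,b_i$, while crossings outside the crosscaps are untouched by the replacement.
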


The following simple fact is a key ingredient in the proof of Lemma~\ref{lemma_parity_K5_K3_3}.

\begin{observation}
\label{obs_euler}
The graph obtained from each of $K_5$ and $K_{3,3}$ by removing an arbitrary pair of adjacent vertices is a cycle; in particular, all of its vertices have an even degree.
\qed
\end{observation}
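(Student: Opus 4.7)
The plan is to verify the statement directly by case analysis, one case for each of the two graphs. No nontrivial obstacle is expected, since the claim concerns two fixed small graphs, each on at most six vertices; the ``main obstacle'' is simply making sure the phrase ``pair of adjacent vertices'' is parsed correctly in each case.

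For $K_5$, first observe that every pair of vertices is adjacent, so ``an arbitrary pair of adjacent vertices'' is just ``an arbitrary pair of vertices''. After deleting any two vertices, three vertices remain, and since $K_5$ is complete, the induced subgraph on these three vertices is $K_3$, which is a cycle of length $3$; in particular, each remaining vertex has degree $2$.

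For $K_{3,3}$ with bipartition $\{a_1,a_2,a_3\} \cup \{b_1,b_2,b_3\}$, an adjacent pair necessarily consists of one vertex from each part, so up to symmetry we may delete $a_1$ and $b_1$. The induced subgraph on the four remaining vertices $\{a_2,a_3,b_2,b_3\}$ contains exactly the four edges $a_2b_2, a_2b_3, a_3b_2, a_3b_3$, i.e.\ it is $K_{2,2}$, which is the $4$-cycle $a_2 b_2 a_3 b_3$. Again every remaining vertex has degree $2$.

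In both cases the resulting graph is a cycle, so in particular all its vertices have even degree, which proves the observation.
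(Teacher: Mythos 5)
Your proof is correct; the paper states this observation without proof (marking it with \qed as immediate), and your direct case check — $K_5$ minus two vertices is $K_3$, and $K_{3,3}$ minus one vertex from each part is $K_{2,2}$, both cycles — is exactly the routine verification the authors had in mind.
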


An $xy$-wing $H$ is called a \emph{Kuratowski $xy$-wing} if $H$ is one of the graphs $K_5-e$ where $e=xy$, $K_{3,3}-e$ where $e=xy$, or $K_{3,3}$; see Figure~\ref{obr_wing}. Observation~\ref{obs_euler} implies the following important property of Kuratowski $xy$-wings.

\begin{observation}
\label{obs_even_independent_edges_with_xu}
Let $H$ be a Kuratowski $xy$-wing and let $u$ be a vertex adjacent to $x$ in $H$. Then $H-x-u$ is a cycle; in particular, $y$ is incident to exactly two edges in $H-x-u$.
\qed
\end{observation}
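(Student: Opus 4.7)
The plan is to verify the observation by a brief case analysis over the three possible Kuratowski $xy$-wings. The structures are so small that direct inspection suffices, and no clever argument is needed. In each case every neighbor of $x$ is interchangeable with the others under an automorphism of $H$ fixing $\{x,y\}$, so it is enough to treat a single choice of $u$ in each type.

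First I would handle $H = K_5 - xy$: the neighbors of $x$ in $H$ are precisely the three vertices of $V(H)\setminus\{x,y\}$. Removing $x$ and any such $u$ leaves three vertices, and since the only deleted edge of $K_5$ was $xy$ (incident to the already-removed vertex $x$), all pairs among these three remaining vertices are still adjacent. Hence $H-x-u$ is a triangle, and $y$ lies on it with degree $2$.

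Next I would handle $H = K_{3,3} - xy$: here $x$ and $y$ belong to distinct color classes, and we may write them as $\{x,a_1,a_2\}$ and $\{y,b_1,b_2\}$. The neighbors of $x$ are $b_1$ and $b_2$, and by symmetry we take $u = b_1$. The four remaining vertices $y, a_1, a_2, b_2$ inherit the edges $ya_1$, $ya_2$, $a_1b_2$, $a_2b_2$ from $K_{3,3}$, forming the $4$-cycle $y\,a_1\,b_2\,a_2\,y$, in which $y$ has degree $2$. Finally, for $H = K_{3,3}$ with $x,y$ nonadjacent, $x$ and $y$ lie in the same color class $\{x,y,a\}$, with the other class $\{b_1,b_2,b_3\}$; the neighbors of $x$ are $b_1,b_2,b_3$, and by symmetry we take $u=b_1$. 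The remaining four vertices $y,a,b_2,b_3$ induce the $4$-cycle $y\,b_2\,a\,b_3\,y$, in which $y$ again has degree $2$.

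The \emph{in particular} clause follows immediately because every vertex of a cycle has degree exactly $2$. There is no real obstacle here; the only thing to check is that the correct edges survive each pair of deletions, which is transparent from the adjacency structure of $K_5$ and $K_{3,3}$.
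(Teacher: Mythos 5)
Your proof is correct and essentially matches the paper's intent: the paper simply invokes Observation~\ref{obs_euler} (removing a pair of adjacent vertices from $K_5$ or $K_{3,3}$ yields a cycle, noting that $H-x-u$ coincides with $K-x-u$ for the underlying Kuratowski graph $K$ since the possibly deleted edge $xy$ is incident to $x$), whereas you carry out the same small case analysis directly. Both are correct and equally elementary.
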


In the following key lemma we keep using the notation for the $2$-amalga\-ma\-tion $\amalg_{x,y} tH$ established earlier in this subsection.

\begin{lemma}
\label{lemma_odd_cr_curves}
Let $t\ge 2$, let $H$ be a Kuratowski $xy$-wing and let $\mathcal{D}$ be an independently even drawing of $\amalg_{x,y} tH$ on a surface $S$. Then for every $i\in [0,t-1]$ the graph $H^i$ has two cycles $C_1^i$ and $C_2^i$ such that 
\begin{itemize}
\item ($C_1^i$ is a subgraph of $H^i-x$ and $C_2^i$ is a subgraph of $H^i-y$) or $C_2^i$ is a subgraph of $H^i-x-y$, and
\item the closed curves $\gamma_1^i$ and $\gamma_2^i$ representing $C_1^i$ and $C_2^i$, respectively, in $\mathcal{D}$ satisfy\linebreak $\Omega_{S}([\gamma_1^i],[\gamma_2^i])=1$.
\end{itemize}
\end{lemma}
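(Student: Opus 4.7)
The plan is to split into three subcases based on the type of Kuratowski $xy$-wing.

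Case C, $H = K_{3,3}$ with $x, y$ in the same part: Here $H^i$ is already a Kuratowski graph, so I restrict $\mathcal{D}$ to $H^i$ and apply Corollary~\ref{cor_kuratowski_forest} with a spanning tree $F$ of $H^i$ in which $x$ and $y$ are both leaves (such a tree exists since $H^i - x - y$ is a star on the other part, to which one can attach $x$ and $y$ as pendants). The corollary produces an independent pair $(e, f) \in E(H^i) \setminus E(F)$ with $y_e^\top y_f = 1$. Because every non-tree edge is incident to exactly one of $\{x, y\}$ and $(e, f)$ is independent, the two edges must be incident to different vertices among $\{x, y\}$, so one fundamental cycle lies in $H^i - y$ and the other in $H^i - x$. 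Assigning these as $C_2^i$ and $C_1^i$ yields option 1.

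Cases A and B, $H = K_5 - xy$ or $H = K_{3,3} - xy$: Here $H^i$ is planar but $H^i + xy$ is a Kuratowski graph. I would simulate the missing edge $xy^i$ by an $xy$-path $P^{i'}$ in another copy $H^{i'}$ (available since $t \ge 2$), obtaining an independently even drawing of $H^i + xy$ on $S$: independence is preserved because every edge of $H^i$ not incident to $\{x,y\}$ is independent of every edge of $P^{i'}$ in $\amalg_{x,y} tH$ and so has an even number of crossings with $P^{i'}$ in $\mathcal{D}$. Next, I would apply Lemma~\ref{lemma_forest} to $\mathcal{D}$ using a spanning tree $T$ of $\amalg_{x,y} tH$ that contains both a spanning tree $F^i$ of $H^i - y$ (with $x$ as a leaf) and every edge of $P^{i'}$; this is possible because $F^i \subseteq H^i$ and $P^{i'} \subseteq H^{i'}$ share only the vertex $x$, so $F^i \cup P^{i'}$ is itself a tree. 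In the resulting drawing $\mathcal{E}$ every edge of $T$ has zero crosscap vector, so in the induced drawing $\mathcal{E}'$ of $H^i + xy$ (where $xy^i$ is drawn along $P^{i'}$) one obtains $y_{xy}^{\mathcal{E}'} = \sum_{e' \in P^{i'}} y_{e'}^{\mathcal{E}} = 0$. The drawing $\mathcal{E}'$ satisfies the two conclusions of Lemma~\ref{lemma_forest} for the pair $(H^i + xy, F^i)$, so Corollary~\ref{cor_kuratowski_forest} applies: an odd number of independent pairs in $E(H^i + xy) \setminus E(F^i)$ have $y_e^\top y_f = 1$. Pairs involving $xy$ contribute $0$ since $y_{xy} = 0$, so at least one pair $(e, f)$ lies in $E(H^i) \setminus E(F^i)$.

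For such a pair, if neither $e$ nor $f$ is incident to $y$, then their fundamental cycles in $F^i + e$ and $F^i + f$ are cycles in $H^i - y$ with homological intersection $1$; since $(e, f)$ is independent and $x$ is a leaf of $F^i$, at most one of these cycles contains $x$, so the other lies in $H^i - x - y$ and provides $C_2^i$ in option 2. The main obstacle is the subcase in which $e$ is incident to $y$ and $f$ is not (they cannot both be incident to $y$, as they would share that vertex), for then no fundamental cycle of $e$ exists with respect to $F^i$. To overcome this, I would invoke Observation~\ref{obs_even_independent_edges_with_xu}: for every neighbor $u$ of $x$ in $H$, the graph $H^i - x - u$ is a cycle lying in $H^i - x$ that contains $y$ with exactly two $y$-incident edges. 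Choosing the subtree $F''$ of $F^i$ inside $H^i - x - y$ to contain the non-$y$-incident edges of this cycle makes its homology class equal to $y_e + y_{yw_2}$, where $yw_2$ is the other $y$-incident edge of the cycle. Observation~\ref{obs_even_independent_edges_with_xu} provides two admissible choices of $u$ giving two distinct values of $w_2$, and the delicate final step will be a parity argument combining these choices with the odd-parity conclusion of Corollary~\ref{cor_kuratowski_forest} to force at least one of them to yield a cycle $C_1^i \subseteq H^i - x$ with $\Omega_S([\gamma_{C_1^i}], [\gamma_{\tilde C_f}]) = 1$, where $C_2^i = \tilde C_f$ lies in $H^i - y$, giving option 1.
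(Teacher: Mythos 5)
Your Case C (where $H=K_{3,3}$, $x,y$ nonadjacent) is correct and in fact a little slicker than the paper's treatment of that subcase: by taking a spanning tree of $H^i$ in which both $x$ and $y$ are leaves, every non-tree edge is incident to exactly one of $x,y$, so any odd independent pair immediately yields one fundamental cycle in $H^i-x$ and one in $H^i-y$, giving option~1 with no further work.

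For Cases A and B ($H=K_5-xy$ and $H=K_{3,3}-xy$) you have the right opening move --- route $xy$ along an $xy$-path $P^{i'}$ in another copy, put $F^i\cup P^{i'}$ into the forest of Lemma~\ref{lemma_forest} so that $y_{xy}=0$, and invoke Corollary~\ref{cor_kuratowski_forest} --- which matches the paper's strategy. You also correctly handle the subcase where neither edge of the odd pair meets $y$. But the subcase where one edge $e$ of the odd pair is incident to $y$ is not proved: you propose to realize $C_1^i$ as a cycle $H^i-x-u$ containing $e$ and to fix the discrepancy between the homology class $y_e+y_{yw_2}$ and the wanted $y_e$ by a parity argument over choices of $u$, but you never carry that argument out, and you explicitly flag the final step as ``delicate.'' That step is in fact the crux: it is exactly where the paper's careful bookkeeping (the fundamental cycles $C_{e^i_j}$, $C_{f^i_{j'}}+C_{h^i}$, $C_{g^i_{j'}}$ of Observation~\ref{obs_cykly_mimo_x_y} and the resulting alternatives) does the work. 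Moreover, the claim that Observation~\ref{obs_even_independent_edges_with_xu} provides ``two admissible choices of $u$ giving two distinct values of $w_2$'' is false for $H=K_{3,3}-xy$: there $y$ has degree~$2$, so the ``other'' $y$-edge $yw_2$ is the same for every admissible $u$, and the two-choice parity argument you envision has no room to operate. As written, the proposal is an outline with a genuine gap in Cases A and B rather than a complete proof.
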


\begin{proof}
For every $i\in[t-1]$, let $H^{i,0}$ be the union of the graph $H^i$ with the unique $xy$-path $P^{0}$ in $F^{0}+ yw^0$. 
The graph $F^{i,0}=F^i \cup P^{0}$ is a spanning tree of $H^{i,0}$, and $E(H^{i,0})\setminus E(F^{i,0}) = E(H^i)\setminus E(T)$. 

Let $\mathcal{E}$ be a crosscap drawing of $\amalg_{x,y} tH$ from Lemma~\ref{lemma_forest}. 
If $H=K_{3,3}$, we apply Corollary~\ref{cor_kuratowski_forest} to $G=H^i$ and $F=F^i$. If $H=K_5-e$ or $H=K_{3,3}-e$ where $e=xy$, we apply Corollary~\ref{cor_kuratowski_forest} to $G=H^i+e$, $F=F^i+e$, and the drawing of $H^i+e$ where $e$ is drawn along the path $P^0$ in $\mathcal{E}$ (with self-crossings removed if necessary). In each of the three cases, we have an odd number of pairs of independent edges $e,f$ in $E(H_i)\setminus E(T)$ such that $y_e^{\top}y_f=1$.
By Observation~\ref{obs_even_independent_edges_with_xu}, for each $j\in[k]$, there are exactly two edges in $E(H^i)\setminus E(T)$ incident with $y$ and independent from $e^i_j$; see also Figure~\ref{obr_wing}. 
Therefore, considering all possible pairs of independent edges in $E(H_i)\setminus E(T)$, at least one of the following alternatives occurs:
\begin{enumerate}
\item[1)] $y_{h^i}^{\top}y_{g^i_{1}}=1$,
\item[2)] $y_{e^i_j}^{\top}(y_{f^i_{j'}}+y_{h^i})=1$ for some $j\in [k]$ and $j'\in [l]$.
\end{enumerate}
In further arguments, we no longer use the fact that the pairs of edges involved in the scalar products are independent.

To finish the proof of the lemma for $i\in[t-1]$, we use Observation~\ref{obs_cykly_mimo_x_y}.
In particular, 
in case~1) we choose $C_1^i=C_{h^i}$ and $C_2^i=C_{g^i_{1}}$, and in case~2) we choose $C_1^i=C_{f^i_{j'}}+C_{h^i}$ and $C_2^i=C_{e^i_{j}}$.

Finally, by exchanging the roles of $H^1$ and $H^0$ in $\amalg_{x,y} tH$ in the proof, we also obtain cycles $C_1^0$ and $C_2^0$ with the required properties.
\end{proof}

We are now ready to finish the proof of Theorem~\ref{theorem_z2}b).

\begin{proposition}
Let $t\ge 2$ and let $H$ be a Kuratowski $xy$-wing. Then $\mathrm{g}_0(\amalg_{x,y} tH) \ge \lceil t/2\rceil$ and $\mathrm{eg}_0(\amalg_{x,y} tH) \ge t$.
\end{proposition}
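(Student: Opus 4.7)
My plan is to apply Lemma~\ref{lemma_odd_cr_curves} to assemble, from an independently even drawing $\mathcal{D}$ of $\amalg_{x,y} tH$ on a surface $S$ of Euler genus $g$, a $t\times t$ matrix $A$ over $\mathbb{F}_2$ with entries $A_{ij}=\Omega_S([\gamma_1^i],[\gamma_2^j])$ whose rank is $t$. Since $\operatorname{rank}(\Omega_S)\le \dim H_1(S;\mathbb{Z}_2)=\mathrm{eg}(S)$, this will yield $\mathrm{eg}_0(\amalg_{x,y} tH)\ge t$; the bound $\mathrm{g}_0(\amalg_{x,y} tH)\ge \lceil t/2\rceil$ then follows from inequality~\eqref{eq_genusy}. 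For each index $i\in\{0,\dots,t-1\}$, Lemma~\ref{lemma_odd_cr_curves} produces cycles $C_1^i,C_2^i$ with $\Omega_S([\gamma_1^i],[\gamma_2^i])=1$ and falling under either alternative~A ($C_1^i\subseteq H^i-x$ and $C_2^i\subseteq H^i-y$) or alternative~B ($C_2^i\subseteq H^i-x-y$); the diagonal of $A$ is therefore all ones, and the task is to arrange $A_{ij}=0$ for $i\neq j$.

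The essential tool is Observation~\ref{obs_disjoint_cycles}, which asserts that any pair of vertex-disjoint cycles in an independently even drawing pair to $0$ under $\Omega_S$. When both $i$ and $j$ are in alternative~A, the exclusions $x\notin V(C_1^i)$ and $y\notin V(C_2^j)$, combined with $V(H^i)\cap V(H^j)=\{x,y\}$, leave no common vertex. When the column index $j$ is in alternative~B, the cycle $C_2^j$ avoids $\{x,y\}$ entirely, so it is vertex-disjoint from any cycle whose vertex set is contained in $V(H^i)$, which in particular covers the case $i\in I_A$. In each of these situations Observation~\ref{obs_disjoint_cycles} delivers $A_{ij}=0$ immediately.

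The main obstacle will be the remaining case, $i\in I_B$ and $j\in I_A$. Here the cycle $C_1^i$ furnished by Lemma~\ref{lemma_odd_cr_curves} is a fundamental cycle $C_{h^i}$ with respect to a spanning tree of $\amalg_{x,y} tH$; it necessarily traverses one additional ``pivot'' copy $H^{\sigma(i)}$ and can therefore share vertices with $C_2^j$. I plan to exploit the freedom of choosing the pivot: selecting $\sigma(i)$ independently for each $i\in I_B$ to be different from $j$ reduces the potential sharing to the single vertex $x$. A local analysis at $x$ then concludes the argument: each edge of $C_1^i$ and $C_2^j$ incident to $x$ is a leaf edge of some $F^l$, lies in the Lemma~\ref{lemma_forest}-spanning tree, and thus contributes trivially to the $y$-vectors; the remaining non-tree contribution reduces to the $y$-vector product of $h^i$ and $e_{j^*}^j$, which are independent edges of $\mathcal{D}$, and a direct calculation shows this product is $0$. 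Once $A=I_t$ is established, $\operatorname{rank}(A)=t$ finishes the proof.
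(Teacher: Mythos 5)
Your framework coincides with the paper's: both apply Lemma~\ref{lemma_odd_cr_curves} and Observation~\ref{obs_disjoint_cycles} to build a $t\times t$ matrix $A$ of intersection numbers $A_{ij}=\Omega_S([\gamma_1^i],[\gamma_2^j])$ with $1$'s on the diagonal, and then bound $\mathrm{eg}(S)=\operatorname{rank}\Omega_S$ from below by $\operatorname{rank}A$. Where you diverge is in aiming for $A=I_t$. That goal is unnecessary: once the indices are ordered so that all alternative-A copies precede all alternative-B copies (the paper chooses $s$ and puts $I_A=[0,s]$, $I_B=[s+1,t-1]$), the two cases you already handle via Observation~\ref{obs_disjoint_cycles} — both row and column in $I_A$, and column in $I_B$ — cover exactly the entries strictly above the diagonal. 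Hence $A$ is lower triangular with $1$'s on the diagonal and already has rank $t$ over $\mathbb{Z}_2$; the entries below the diagonal are irrelevant and the paper simply leaves them as unknown $*$'s (Figure~\ref{fig_matice}).

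The extra work you set up for $i\in I_B$, $j\in I_A$ is therefore superfluous, and as sketched it does not close. Lemma~\ref{lemma_odd_cr_curves} and its proof use a \emph{fixed} pivot ($H^0$, with $H^1$ for $i=0$), so the claimed ``freedom of choosing $\sigma(i)$'' would require re-proving that lemma in a more flexible form; and even granting it, $\sigma(i)$ must avoid all of $I_A$ at once (the matrix entries $A_{ij}$ share a single $C_1^i$ across all $j$), so no admissible pivot exists when $|I_B|\le 1$. Most importantly, the final claim — that $y_{h^i}^{\top}y_{e^j_{j^*}}=0$ because $h^i$ and $e^j_{j^*}$ are independent — is not true in general: $y_e^{\top}y_f$ can equal $1$ for independent non-tree edges, which is exactly what Lemma~\ref{lemma_iocr_cycles} exploits. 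The corresponding fundamental cycles $C_{h^i}$ and $C_{e^j_{j^*}}$ also meet at $x$, so Observation~\ref{obs_disjoint_cycles} does not apply; as in the proof of Lemma~\ref{lemma_K33}, the parity of their intersection then depends on the rotation at $x$ and is not forced to be $0$. Dropping this step and invoking triangularity directly is the clean way to finish.
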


\begin{proof}
Let $\mathcal{D}$ be an independently even drawing of $\amalg_{x,y} tH$ on a surface $S$ of Euler genus $g$.
For every $i\in[0,t-1]$, let $C^i_1$ and $C^i_2$ be the cycles from Lemma~\ref{lemma_odd_cr_curves} and let $\gamma^i_1$ and $\gamma^i_2$, respectively, be the closed curves representing them in $\mathcal{D}$. 

Without loss of generality, we assume that there is an $s\in[0,t-1]$ such that 
\begin{itemize}
\item for every $i \in [0,s]$, $C_1^i$ is a subgraph of $H^i-x$ and $C_2^i$ is a subgraph of $H^i-y$, and
\item for every $i \in [s+1,t-1]$, the cycle $C_2^i$ is a subgraph of $H^i-x-y$.
\end{itemize}
It follows that for distinct $i,i'\in [0,t-1]$, the cycles $C^i_1$ and $C^{i'}_2$ are vertex-disjoint whenever $i,i'\in [0,s]$, $i,i' \in [s+1,t-1]$, or $i\le s < i'$.

Let $A$ be the $t\times t$ matrix with entries 
\[
A_{i,i'}=\Omega_{S}([\gamma^i_1],[\gamma^{i'}_2]).
\]
By Lemma~\ref{lemma_odd_cr_curves}, Observation~\ref{obs_disjoint_cycles} and the previous discussion, the matrix $A$ has $1$-entries on the diagonal and $0$-entries above the diagonal; see Figure~\ref{obr_matice}. Thus, the rank of $A$ over $\mathbb{Z}_2$ is $t$. Hence, the rank of $\Omega_{S}$ is at least $t$, which implies $g\ge t$ using Fact~\ref{fact_full_rank}. 
\end{proof}

\begin{figure}
\[
\begin{pmatrix}
1 & 0 & 0 & 0 \\
0 & 1 & 0 & 0 \\
* & * & 1 & 0 \\
* & * & 0 & 1 \\
\end{pmatrix}
\]
\caption{An example of the matrix $A$ with $t=4$ and $s=1$. The entries marked with $*$ may be equal to $0$ or $1$; the remaining entries are determined uniquely.}
\label{obr_matice}
\end{figure}

\section{Acknowledgements}
We thank Zden\v{e}k Dvo\v{r}\'ak, Xavier Goaoc and Pavel Pat\'ak for helpful discussions. 
We also thank Bojan Mohar, Paul Seymour, Gelasio Salazar, Jim Geelen and John Maharry for information about their unpublished results related to Conjecture~\ref{conjecture_folklore}. Finally we thank the reviewers for corrections and suggestions for improving the presentation.




\begin{thebibliography}{10}

\bibitem{BHKY62_additivity}
J. Battle, F. Harary, Y. Kodama and J. W. T. Youngs, 
Additivity of the genus of a graph,
{\em Bull. Amer. Math. Soc.} {\bf 68} (1962), 565--568. 

\bibitem{BKMM_7connected}
T. B{\"o}hme, K. Kawarabayashi, J. Maharry and B. Mohar,
$K_{3,k}$-minors in large $7$-connected graphs, preprint available at \url{http://preprinti.imfm.si/PDF/01051.pdf} (2008).

\bibitem{BKMM_linearconnect}
T. B{\"o}hme, K. Kawarabayashi, J. Maharry and B. Mohar,
Linear connectivity forces large complete bipartite minors,
{\em J. Combin. Theory Ser. B\/} {\bf 99}(3) (2009), 557--582.

\bibitem{Bo78_Kmn}
A. Bouchet,
Orientable and nonorientable genus of the complete bipartite graph,
{\em J. Combin. Theory Ser. B\/} {\bf 24}(1) (1978), 24--33.

\bibitem{Ca91_tournament}
D. de Caen,
The ranks of tournament matrices,
{\em Amer. Math. Monthly\/} {\bf 98}(9) (1991), 829--831. 

\bibitem{CN00_thrackles}
G.~Cairns and Y.~Nikolayevsky, Bounds for generalized thrackles, 
{\em Discrete Comput. Geom.} {\bf 23}(2) (2000), 191--206.

\bibitem{CRS15_continuum}
R. Christian, R. B. Richter and G. Salazar,
Embedding a graph-like continuum in some surface,
{\em J. Graph Theory\/} {\bf 79}(2) (2015), 159--165. 

\bibitem{Code17_computational}
{\'E}. Colin de Verdi{\`e}re,
Computational topology of graphs on surfaces,
{\em Handbook of discrete and computational geometry}, Third edition, Edited by Jacob E. Goodman, Joseph O'Rourke and Csaba D. T\'oth, Discrete Mathematics and its Applications (Boca Raton), CRC Press, Boca Raton, FL, 2018. ISBN: 978-1-4987-1139. Electronic version: \url{http://www.csun.edu/~ctoth/Handbook/HDCG3.html} (accessed January 2022).

\bibitem{CKPPT16_direct}
{\'E}. Colin de Verdi{\`e}re, V. Kalu{\v{z}}a, P. Pat{\'a}k, Z. Pat{\'a}kov{\'a} and M. Tancer,
A direct proof of the strong Hanani--Tutte theorem on the projective plane,
{\em J. Graph Algorithms Appl.} {\bf 21}(5) (2017), 939--981. 

\bibitem{DGH85_2amalgamation}
R. W. Decker, H. H. Glover and J. P. Huneke,
Computing the genus of the 2-amalgamations of graphs,
{\em Combinatorica\/} {\bf 5}(4) (1985), 271--282. 

\bibitem{Di16_fifth}
R.~Diestel, 
{\em Graph Theory}, Fifth edition, 
{\em Graduate Texts in Mathematics\/} 173, Springer, 2017. ISBN: 978-3-662-53621-6.

\bibitem{FHRR95_projective}
J. R. Fiedler, J. P. Huneke, R. B. Richter and N. Robertson, 
Computing the orientable genus of projective graphs,
{\em J. Graph Theory\/} {\bf 20}(3) (1995), 297--308. 

\bibitem{FrMu11_alternative}
J.-O. Fr{\"o}hlich and T. M{\"u}ller, 
Linear connectivity forces large complete bipartite minors: an alternative approach,
{\em J. Combin. Theory Ser. B\/} {\bf 101}(6) (2011), 502--508.

\bibitem{FK17_genus4}
R. Fulek and J. Kyn\v{c}l,
Counterexample to an extension of the Hanani--Tutte theorem on the surface of genus 4,
{\em Combinatorica\/} {\bf 39} (2019), Issue 6, 1267--1279.

\bibitem{GrTu01_theory}
J. L. Gross,  and T. W. Tucker, 
{\em Topological graph theory},
Dover Publications, Inc., Mineola, NY, 2001. ISBN: 0-486-41741-7.

\bibitem{Ha34_uber}
H.~Hanani, 
{\"{U}}ber wesentlich unpl{\"{a}}ttbare {K}urven im drei-dimensionalen {R}aume, 
{\em Fundamenta Mathematicae\/} {\bf 23} (1934), 135--142.

\bibitem{Hat02_AT}
A. Hatcher, 
{\em Algebraic topology}, Cambridge University Press, Cambridge, 2002. ISBN: 0-521-79160-X. Electronic version: \url{http://pi.math.cornell.edu/~hatcher/AT/ATpage.html} (accessed January 2019).

\bibitem{Hau14_mod2}
J.-C. Hausmann, 
{\em Mod two homology and cohomology},
Universitext, Springer, Cham, 2014. ISBN: 978-3-319-09353-6; 978-3-319-09354-3.

\bibitem{KaMo07_survey}
K. Kawarabayashi and B. Mohar,
Some recent progress and applications in graph minor theory,
{\em Graphs Combin.} {\bf 23}(1) (2007), 1--46.

\bibitem{Kle76_parity}
D. J. Kleitman,
A note on the parity of the number of crossings of a graph,
{\em J. Combinatorial Theory Ser. B\/} {\bf 21}(1) (1976), 88--89. 

\bibitem{Ky20_issue}
J. Kyn\v{c}l, Issue UPDATE: in graph theory, different definitions of edge crossing numbers - impact on applications?, MathOverflow, Answer to a question of user161819, \url{https://mathoverflow.net/a/366876} (2020).

\bibitem{LoMa11_arf}
M. Loebl and G. Masbaum,
On the optimality of the Arf invariant formula for graph polynomials,
{\em Adv. Math.} {\bf 226}(1) (2011), 332--349. 

\bibitem{MN09_invitation}
J. Matou\v{s}ek and J. Ne\v{s}et\v{r}il,
{\em Invitation to Discrete Mathematics}, Second edition, 
Oxford University Press, Oxford, 2009. ISBN: 978-0-19-857042-4.

\bibitem{Mi87_additivity}
G. L. Miller, 
An additivity theorem for the genus of a graph,
{\em J. Combin. Theory Ser. B\/} {\bf 43}(1) (1987), 25--47. 

\bibitem{Mo01_survey}
B. Mohar,
Graph minors and graphs on surfaces, 
{\em Surveys in combinatorics, 2001 (Sussex)}, 145--163,
{\em London Math. Soc. Lecture Note Ser.} 288, Cambridge Univ. Press, Cambridge, 2001. 

\bibitem{MT01_graphs}
B.~Mohar and C.~Thomassen, 
{\em Graphs on surfaces\/}, 
Johns Hopkins Studies in the Mathematical Sciences, Johns Hopkins University Press, Baltimore, MD, 2001. ISBN 0-8018-6689-8.

\bibitem{PSS09_pp}
M.~J. Pelsmajer, M.~Schaefer and D. Stasi, 
Strong Hanani--Tutte on the projective plane,
\emph{SIAM J. Discrete Math.} {\bf 23}(3) (2009), 1317--1323. 

\bibitem{PSS09_surfaces}
M.~J. Pelsmajer, M.~Schaefer and D.~{\v{S}}tefankovi{\v{c}},
Removing even crossings on surfaces,
{\em European J. Combin.} {\bf 30}(7) (2009), 1704--1717. 

\bibitem{R87_2amalgamation}
R. B. Richter,
On the Euler genus of a 2-connected graph,
{\em J. Combin. Theory Ser. B} {\bf 43}(1) (1987), 60--69. 

\bibitem{Ri65_Kmn}
G. Ringel,
Das Geschlecht des vollst\"{a}ndigen paaren {G}raphen,
{\em Abh. Math. Sem. Univ. Hamburg} {\bf 28} (1965), 139--150. 

\bibitem{Ri65b_Kmn_Euler}
G. Ringel,
Der vollst\"{a}ndige paare {G}raph auf nichtorientierbaren {F}l\"{a}chen,
{\em J. Reine Angew. Math.} {\bf 220} (1965), 88--93.

\bibitem{RoSe90_VII_paths}
N. Robertson and P. D. Seymour, 
Graph minors. VII. Disjoint paths on a surface,
{\em J. Combin. Theory Ser. B\/} {\bf 45}(2) (1988), 212--254. 

\bibitem{RoVi90_representativity}
N. Robertson and R. Vitray,
Representativity of surface embeddings,
{\em Paths, flows, and VLSI-layout (Bonn, 1988)}, 293--328,
{\em Algorithms Combin.} 9, Springer, Berlin, 1990.

\bibitem{Sch13_hananitutte}
M.~Schaefer, 
Hanani-{T}utte and related results, {\em Geometry---intuitive, discrete, and convex}, vol.~24 of {\em Bolyai Soc. Math. Stud.}, 259--299, J\'anos
Bolyai Math. Soc., Budapest (2013).

\bibitem{SS13_block}
M. Schaefer and D.~{\v{S}}tefankovi{\v{c}},
Block additivity of $\mathbb{Z}_2$-embeddings,
{\em Graph drawing, Lecture Notes in Computer Science\/} 8242, 185--195, Springer, Cham, 2013.

\bibitem{Seym17}
Paul Seymour, personal communication, 2017.

\bibitem{StBe77_blocks}
S. Stahl and L. W. Beineke,
Blocks and the nonorientable genus of graphs,
{\em J. Graph Theory} {\bf 1}(1) (1977), 75--78. 

\bibitem{Sze04_iocr}
L. A. Sz{\'e}kely, 
A successful concept for measuring non-planarity of graphs: the crossing number,
{\em Discrete Math.} {\bf 276}(1-3) (2004), 331--352.

\bibitem{Tutte70_toward}
W.~T. Tutte, Toward a theory of crossing numbers, 
{\em J. Combinatorial Theory\/} {\bf 8} (1970), 45--53.

\end{thebibliography}
\end{document}